\documentclass[preprint,12pt]{elsarticle}
\usepackage{amsmath, amssymb, amsthm, algorithm, graphicx, url, hyperref, mathtools, tensor}
\usepackage{xcolor}
\usepackage{tcolorbox}
\tcbuselibrary{breakable}
\usepackage{lipsum}  
\usepackage{mathrsfs}
\newtcolorbox{mycomment}{
    colback=yellow!10,
    colframe=yellow!50!black,
    title=Comment,
    fonttitle=\bfseries,
    boxrule=1pt
}
\usepackage[capitalize]{cleveref}
\usepackage{geometry}
\usepackage{algpseudocode}
\usepackage{caption}
\usepackage{subcaption}
\usepackage{makecell}
\usepackage{tikz}
\usepackage{pgfplots}
\pgfplotsset{compat=1.18}
\usepackage{booktabs}
\DeclareMathOperator*{\argmin}{arg\,min}
\newtheorem{theorem}{Theorem}[section]
\newtheorem{lemma}[theorem]{Lemma}
\newtheorem{corollary}[theorem]{Corollary}

\newtheorem{definition}[theorem]{Definition}
\newtheorem{remark}[theorem]{Remark}
\newtheorem{assumption}[theorem]{Assumption}

\newcommand{\R}{\mathbb{R}}

\newcommand{\Input}{\item[\textbf{Input:}]}
\newcommand{\Output}{\item[\textbf{Output:}]}

\begin{document}

\begin{frontmatter}

\title{\texorpdfstring{Adaptive Randomized Pivoting for Tensor Cross Approximation
in the T-Product Framework}{Adaptive Randomized Pivoting for Tensor Cross Approximation in the T-Product Framework}}



\author[inst1]{Ahmadsho Akdodshoev}


\author[inst2]{Valentin Leplat%
\texorpdfstring{\corref{cor1}\fnref{fn1}}{}}
\ead{valentin.leplat@gmail.com}

\author[inst1]{Salman Ahmadi-Asl%
\texorpdfstring{\fnref{fn1}}{}}
\ead{s.ahmadiasl@innopolis.ru}

\address[inst1]{Innopolis University, 420500 Innopolis, Russia}
\address[inst2]{Department of Mathematics and Operational Research, Universit\'e de Mons, Rue de Houdain 9, 7000 Mons, Belgium}

\cortext[cor1]{Corresponding author.}

\fntext[fn1]{Valentin Leplat and Salman Ahmadi-Asl contributed equally to this work.}

\begin{abstract}
This paper studies extensions of adaptive randomized pivoting (ARP), recently introduced for matrix column subset selection, to tensors in the t-product framework. We propose two constructions. ARP-T-CUR applies matrix ARP-cross at the nonredundant Fourier frequencies and uses conjugate symmetry to preserve real-valued tensors. Assuming that the selected intersections are nonsingular almost surely, the matrix theory gives a direct expected-error bound for this Fourier-slicewise approximation. T-ARP instead selects common lateral and horizontal slices, with the same indices used at every Fourier frequency. This common-index constraint requires a new analysis. Under frequency-alignment and frequency-wise rank-growth assumptions, we prove an expected-error bound and recover the matrix ARP factor \(r+1\) when the frequency-wise sampling distributions are aligned. We also derive bounds for tensor cross approximation and t-DEIM. The numerical experiments show that both proposed methods give accurate tensor approximations and perform favorably against the considered slice-selection baselines. A separate full-enumeration experiment verifies the assumptions and the expected-error bound for common-index T-ARP.

\end{abstract}

\begin{keyword}
Tensor singular value decomposition (T-SVD), adaptive randomized pivoting (ARP), column subset selection problem (CSSP), t-product, randomized numerical linear algebra.

\MSC 65F55 \sep 15A69 \sep 65Y20 \sep 68W20 \sep 65F30 
\end{keyword}

\end{frontmatter}

\section{Introduction}
\label{sec:introduction}

Tensor decompositions are widely used to represent and process multidimensional data. Important examples include the Higher-Order Singular Value Decomposition (HOSVD)~\cite{de2000multilinear}, the tensor singular value decomposition (T-SVD) in the t-product framework~\cite{kilmer2011factorization}, tensor train (TT) decompositions~\cite{oseledets2011tensor}, tensor ring (TR) representations~\cite{zhao2016tensor}, and more recently the tubal tensor train (TTT) decomposition~\cite{ahmadiasl2026newtensornetworktubal}. These methods have been applied to image completion~\cite{song2019tensor,asante2023image}, biomedical signals~\cite{duan2025paramps}, quantum simulations~\cite{bortone2025simple,brice2025numerical}, and collaborative filtering~\cite{de2025weighted,zhou2023tensor}. Their computation often relies on repeated matrix factorizations, in particular singular value decompositions.

For large tensors, deterministic decompositions can be expensive in time and memory. This motivates randomized methods based on sampling and sketching.

Within matrix approximation, column subset selection aims to represent a matrix using a small subset of its actual columns. The Adaptive Randomized Pivoting (ARP) method of Cortinovis and Kressner~\cite{cortinovis2024adaptive} sequentially samples columns from an evolving row-space basis and gives an \((r+1)\) bound for the expected squared projection error. Motivated by this result, we study extensions of ARP to the t-product framework. Related randomized and adaptive methods for t-product tensor approximation were proposed in~\cite{tarzanagh2018fast,ahmadi2024adaptive,ahmadi2024robust,ahmadi2024randomized}.

Our main contributions are as follows:
\begin{itemize}
    \item We present two extensions of ARP to the t-product framework: a Fourier-slicewise construction, called ARP-T-CUR, and a common-index tensor construction, called T-ARP.
    \item For ARP-T-CUR, we obtain a direct slicewise expected-error bound by applying the matrix ARP-cross result frequency by frequency. For T-ARP, we identify the additional coupling created by using the same indices at all Fourier frequencies and derive an expected-error bound under explicit frequency-alignment and frequency-wise rank-growth assumptions. The matrix ARP factor \(r+1\) is recovered in the perfectly aligned case.
    \item We derive corresponding bounds for two-stage tensor cross approximation and t-DEIM.
    \item We provide Python/JAX implementations and numerical tests with two complementary purposes: practical projection-based benchmarks show that both proposed methods perform favorably against the considered slice-selection baselines, while a small full-enumeration experiment directly validates the common-index theorem and shows that its aligned factor can be sharp.
\end{itemize}

The theory is developed for third-order tensors. The video experiment uses the higher-order product of Martin et al.~\cite{Martin2013}; we treat this as a separate implementation extension rather than claiming that every third-order proof transfers automatically.

The paper is organized as follows. Section~\ref{sec:preliminaries} reviews the t-product, T-SVD, and T-QR decomposition. Section~\ref{SEc:ARP} recalls matrix ARP. Section~\ref{sec:tsvd} introduces the two tensor extensions and gives their error analyses. Section~\ref{SEc:NE} reports the numerical experiments. Section~\ref{sec:conclusion} concludes the paper.

\section{Preliminaries}
\label{sec:preliminaries}
In this section, we establish the notation and mathematical foundations used for the T-SVD. Calligraphic letters denote tensors, uppercase letters denote matrices, and lowercase letters denote vectors. The Frobenius norm of a tensor or matrix is denoted by \(\|\cdot\|_F\). The T-SVD is built on the t-product framework introduced by Kilmer and Martin~\cite{kilmer2011factorization}.

Let \(\mathcal{X} \in \mathbb{R}^{n_1 \times n_2 \times n_3}\) denote a third-order tensor. We adopt the following standard notation:
\begin{itemize}
    \item \textbf{Fibers:} A fiber is a one-dimensional slice obtained by fixing all indices but one. For a third-order tensor, we have:
    \begin{itemize}
        \item \textit{Column fiber} (mode-1): \(\mathcal{X}(:, j, k)\).
        \item \textit{Row fiber} (mode-2): \(\mathcal{X}(i, :, k)\).
        \item \textit{Tube fiber} (mode-3): \(\mathcal{X}(i, j, :)\).
    \end{itemize}
    \item \textbf{Slices:} A slice is a two-dimensional section of a tensor:
    \begin{itemize}
        \item \textit{Frontal slice}: \(X^{(k)} = \mathcal{X}(:, :, k)\) for \(k = 1,2,\ldots,n_3\); it is obtained by fixing the third index.
        \item \textit{Lateral slice}: \(\mathcal{X}(:, j, :)\) for \(j = 1,2,\ldots,n_2\).
        \item \textit{Horizontal slice}: \(\mathcal{X}(i, :, :)\) for \(i = 1,2,\ldots,n_1\); it is obtained by fixing the first index.
    \end{itemize}
\end{itemize}

The t-product is a multiplication operation between two third-order tensors of appropriate dimensions. To define it, we first need the concepts of block circulant matrices and the unfold operator.

\begin{definition}[Unfold and Fold Operators]
For a tensor \(\mathcal{X} \in \mathbb{R}^{n_1 \times n_2 \times n_3}\), the \emph{unfold} operator maps \(\mathcal{X}\) to a block matrix of size \(n_1 n_3 \times n_2\):
\begin{equation}
\texttt{unfold}(\mathcal{X}) = 
\begin{bmatrix}
{X}^{(1)} \\
{X}^{(2)} \\
\vdots \\
{X}^{(n_3)}
\end{bmatrix},
\end{equation}
where \({X}^{(k)}\) is the \(k\)-th frontal slice. The inverse \emph{fold} operator, \(\texttt{fold}(\texttt{unfold}(\mathcal{X})) = \mathcal{X}\), reshapes the block matrix back into a tensor.
\end{definition}

\begin{definition}[Block Circulant Matrix]
The \emph{block circulant matrix} of a tensor \(\mathcal{X} \in \mathbb{R}^{n_1 \times n_2 \times n_3}\) is an \(n_1 n_3 \times n_2 n_3\) block matrix defined as:
\begin{equation}
\texttt{bcirc}(\mathcal{X}) = 
\begin{bmatrix}
{X}^{(1)} & {X}^{(n_3)} & {X}^{(n_3-1)} & \dots & {X}^{(2)} \\
{X}^{(2)} & {X}^{(1)} & {X}^{(n_3)} & \dots & {X}^{(3)} \\
\vdots & \vdots & \vdots & \ddots & \vdots \\
{X}^{(n_3)} & {X}^{(n_3-1)} & {X}^{(n_3-2)} & \dots & {X}^{(1)}
\end{bmatrix}.
\end{equation}
\end{definition}

\begin{definition}[T-Product]
Let \(\mathcal{X} \in \mathbb{R}^{n_1 \times n_2 \times n_3}\) and \(\mathcal{Y} \in \mathbb{R}^{n_2 \times \ell \times n_3}\). The \emph{t-product} \(\mathcal{Z} = \mathcal{X} * \mathcal{Y} \in \mathbb{R}^{n_1 \times \ell \times n_3}\) is defined as:
\begin{equation}
\mathcal{Z} = \texttt{fold}(\texttt{bcirc}(\mathcal{X}) \cdot \texttt{unfold}(\mathcal{Y})).
\end{equation}
\end{definition}

The t-product can be understood as matrix multiplication in which the scalar multiplication operation is replaced by circular convolution between tube fibers. More precisely, the t-product between tensors $\mathcal{X} \in \mathbb{R}^{m \times n \times p}$ and $\mathcal{Y} \in \R^{n \times q \times p}$ yields $\mathcal{Z} \in \R^{m \times q \times p}$ defined via:
\begin{equation}
\mathcal{Z}(i,j,:) = \sum_{k=1}^n \mathcal{X}(i,k,:) \ast \mathcal{Y}(k,j,:),
\end{equation}
where $\ast$ denotes circular convolution between tubes (vectors of length $p$). Equivalently, by the convolution theorem, the t-product corresponds to face-wise matrix multiplication in the Fourier domain.

A fundamental fact behind the t-product is its reduction to slicewise matrix multiplication in the Fourier domain. Let \(\widehat{\mathcal X}=\texttt{fft}(\mathcal X,[],3)\), where the FFT is applied along the third mode. Thus, for each \((i,j)\),
\begin{equation}
\widehat{\mathcal X}(i,j,:)=\texttt{fft}(\mathcal X(i,j,:)).
\end{equation}

\begin{theorem}[Block Diagonalization]
Let \(F_{n_3}\) denote the matrix associated with the unnormalized DFT used by \texttt{fft}, so that \(F_{n_3}^{-1}=n_3^{-1}F_{n_3}^H\). Then
\begin{equation}
(F_{n_3}\otimes I_{n_1})\,\texttt{bcirc}(\mathcal X)\,(F_{n_3}^{-1}\otimes I_{n_2})
=
\operatorname{bdiag}\!\left(\widehat X^{(1)},\ldots,\widehat X^{(n_3)}\right).
\end{equation}
Here \(\widehat X^{(k)}\) denotes the \(k\)-th frontal slice of \(\widehat{\mathcal X}\).
\end{theorem}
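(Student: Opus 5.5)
The plan is to write $\texttt{bcirc}(\mathcal X)$ as a sum of Kronecker products of circulant shift matrices with the frontal slices, and then use the fact that the DFT diagonalizes every circulant matrix. Let $Z\in\mathbb{R}^{n_3\times n_3}$ be the cyclic down-shift permutation, $Z e_j = e_{j+1}$ with indices taken modulo $n_3$. Reading the block pattern of the definition, block $(i,j)$ of $\texttt{bcirc}(\mathcal X)$ equals $X^{(k)}$ exactly when $i-j\equiv k-1 \pmod{n_3}$. Hence
\begin{equation*}
\texttt{bcirc}(\mathcal X)=\sum_{k=1}^{n_3} Z^{k-1}\otimes X^{(k)}.
\end{equation*}

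Next, by the mixed-product property of the Kronecker product,
\begin{equation*}
(F_{n_3}\otimes I_{n_1})\bigl(Z^{k-1}\otimes X^{(k)}\bigr)(F_{n_3}^{-1}\otimes I_{n_2})=\bigl(F_{n_3}Z^{k-1}F_{n_3}^{-1}\bigr)\otimes X^{(k)}.
\end{equation*}
It therefore suffices to show that $F_{n_3} Z F_{n_3}^{-1}=\Omega:=\operatorname{diag}(1,\omega,\dots,\omega^{n_3-1})$, where $\omega=e^{-2\pi\mathrm{i}/n_3}$ and $(F_{n_3})_{pq}=\omega^{(p-1)(q-1)}$. Equivalently, I would check $F_{n_3}Z=\Omega F_{n_3}$ entrywise. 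Column $q$ of $F_{n_3}Z$ is column $q+1$ of $F_{n_3}$ (cyclically), with entries $\omega^{(p-1)q}=\omega^{p-1}\omega^{(p-1)(q-1)}$. This uses $\omega^{n_3}=1$ to handle the wrap-around column. Powers then give $F_{n_3}Z^{k-1}F_{n_3}^{-1}=\Omega^{k-1}$.

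Summing over $k$, the transformed matrix is $\sum_k \Omega^{k-1}\otimes X^{(k)}$. This matrix is block diagonal, and its $p$-th diagonal block is
\begin{equation*}
\sum_{k=1}^{n_3}\omega^{(p-1)(k-1)}X^{(k)}.
\end{equation*}
Entrywise this is exactly the $p$-th component of $\texttt{fft}(\mathcal X(i,j,:))$, since \texttt{fft} computes $\hat x_p=\sum_k \omega^{(p-1)(k-1)}x_k$. So the $p$-th diagonal block equals $\widehat X^{(p)}$, which proves the claim.

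The main obstacle is purely a matter of conventions. The direction of the shift, the sign of the exponent in $\omega$, and whether $F$ or $F^{-1}$ appears on the left must all be consistent with the \texttt{bcirc} layout. If the shift direction were reversed, one would get $\omega^{-(p-1)(k-1)}$ and hence inverse-DFT blocks. So I will carefully verify the identity $\texttt{bcirc}(\mathcal X)=\sum_k Z^{k-1}\otimes X^{(k)}$ against the first column $(X^{(1)},X^{(2)},\dots,X^{(n_3)})$ of the definition before doing anything else.
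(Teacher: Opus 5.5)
Your proof is correct, and the conventions check out. Block $(i,j)$ of $\texttt{bcirc}(\mathcal X)$ is $X^{(k)}$ with $i-j\equiv k-1 \pmod{n_3}$, which matches $(Z^{k-1})_{ij}$ for the down-shift $Z e_j=e_{j+1}$. The wrap-around column in $F_{n_3}Z=\Omega F_{n_3}$ is handled by $\omega^{n_3}=1$. The diagonal blocks $\sum_k\omega^{(p-1)(k-1)}X^{(k)}$ use exactly the negative-exponent convention of \texttt{fft}. So the check you flag as the main obstacle is already settled in your first paragraph.

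The paper does not prove this theorem. It is quoted as standard background from the t-product framework of Kilmer and Martin. The only justification in the text is the remark that, by the convolution theorem, the t-product becomes facewise multiplication in the Fourier domain. There is therefore no paper proof to compare against.

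Your route via $\texttt{bcirc}(\mathcal X)=\sum_k Z^{k-1}\otimes X^{(k)}$ and the mixed-product property is the cleanest self-contained argument. It reduces everything to the single eigen-relation $F_{n_3}Z=\Omega F_{n_3}$, so it needs only invertibility of $F_{n_3}$, not the explicit formula $F_{n_3}^{-1}=n_3^{-1}F_{n_3}^H$.

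The alternative is a direct block-entry computation. One writes block $(p,q)$ of the transformed matrix as $n_3^{-1}\sum_{i,j}\omega^{(p-1)(i-1)}X^{(k(i,j))}\overline{\omega}^{(q-1)(j-1)}$, substitutes $i-1\equiv (k-1)+(j-1)$, and uses orthogonality of the $n_3$-th roots of unity to get $\delta_{pq}\sum_k\omega^{(p-1)(k-1)}X^{(k)}$. That works too, but it needs the explicit inverse and more index bookkeeping.
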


Consequently, if \(\mathcal Z=\mathcal X*\mathcal Y\), then
\begin{equation}
\widehat Z^{(k)}=\widehat X^{(k)}\widehat Y^{(k)},\qquad k=1,\ldots,n_3.
\end{equation}
For the same unnormalized FFT convention, Parseval's identity gives
\begin{equation}
\|\mathcal X\|_F^2=\frac{1}{n_3}\sum_{\ell=1}^{n_3}\|\widehat X^{(\ell)}\|_F^2.
\end{equation}

\begin{algorithm}[H]
\caption{T-Product via Fourier Domain (Half-Slice Version)}
\label{alg:tproduct_half}
\begin{algorithmic}[1]
\Input Tensors \(\mathcal X\in\mathbb R^{n_1\times n_2\times n_3}\) and \(\mathcal Y\in\mathbb R^{n_2\times q\times n_3}\)
\Output Tensor \(\mathcal Z=\mathcal X*\mathcal Y\in\mathbb R^{n_1\times q\times n_3}\)
\State Compute \(\widehat{\mathcal X}=\texttt{fft}(\mathcal X,[],3)\).
\State Compute \(\widehat{\mathcal Y}=\texttt{fft}(\mathcal Y,[],3)\).
\State Set \(L=\lfloor n_3/2\rfloor+1\).
\For{\(k=1,2,\ldots,L\)}
    \State \(\widehat Z^{(k)} = \widehat X^{(k)} \widehat Y^{(k)}\).
\EndFor
\For{\(k=L+1,\ldots,n_3\)}
    \State \(\widehat Z^{(k)} = \overline{\widehat Z^{(n_3-k+2)}}\).
\EndFor
\State Assemble \(\widehat{\mathcal Z}\) from \(\{\widehat Z^{(k)}\}_{k=1}^{n_3}\).
\State \(\mathcal Z = \texttt{ifft}(\widehat{\mathcal Z},[],3)\).
\State \Return \(\mathcal Z\).
\end{algorithmic}
\end{algorithm}


\begin{definition}[Tensor Transpose]
The transpose of a tensor \(\mathcal{X} \in \mathbb{R}^{n_1 \times n_2 \times n_3}\), denoted \(\mathcal{X}^{\top}\), is the \(n_2 \times n_1 \times n_3\) tensor obtained by transposing each frontal slice and then reversing the order of slices 2 through \(n_3\). More formally, for \(k = 1\):
\begin{equation}
({X}^{\top})^{(1)} = ({X}^{(1)})^{\top},
\end{equation}
and for \(k = 2,3,\ldots,n_3\):
\begin{equation}
({X}^{\top})^{(k)} = ({X}^{(n_3 - k + 2)})^{\top}.
\end{equation}
\end{definition}

For a complex tensor \(\mathcal X\), its conjugate transpose \(\mathcal X^*\) is defined in the same way, replacing each matrix transpose by a conjugate transpose. Equivalently, its Fourier slices satisfy
\[
\widehat{(\mathcal X^*)}^{(\ell)}
=
\bigl(\widehat X^{(\ell)}\bigr)^H,
\qquad \ell=1,2,\ldots,n_3.
\]
For real tensors, \(\mathcal X^*=\mathcal X^\top\).

\begin{definition}[Identity Tensor]\label{def:identity-tensor}
The \emph{identity tensor} \(\mathcal{I}_n \in \mathbb{R}^{n \times n \times n_3}\) is a tensor whose first frontal slice is the \(n \times n\) identity matrix, and all other frontal slices are zero matrices. For any tensor \(\mathcal{X}\) of compatible dimensions, \(\mathcal{X} * \mathcal{I}_n = \mathcal{X}\) and \(\mathcal{I}_n * \mathcal{X} = \mathcal{X}\).
\end{definition}

\begin{definition}[Orthogonal Tensor]
A tensor \(\mathcal{Q} \in \mathbb{R}^{n \times n \times n_3}\) is \emph{orthogonal} if it satisfies:
\begin{equation}
\mathcal{Q}^{\top} * \mathcal{Q} = \mathcal{Q} * \mathcal{Q}^{\top} = \mathcal{I}.
\end{equation}
In the Fourier domain, this is equivalent to each frontal slice \(\widehat Q^{(k)}\) being unitary, that is, \((\widehat Q^{(k)})^H\widehat Q^{(k)}=I\).
\end{definition}

\begin{definition}[f-diagonal Tensor]
A tensor is called \emph{f-diagonal} if each of its frontal slices is a diagonal matrix. For a third-order tensor \(\mathcal{S}\in \mathbb{R}^{n_1 \times n_2 \times n_3}\), this means \({S}^{(k)}\) is diagonal for all \(k = 1,2,\ldots, n_3\).
\end{definition}

\begin{definition}[Upper Triangular Tensor]
A tensor $\mathcal{R} \in \R^{n_1 \times n_2 \times n_3}$ is \emph{upper triangular} if each of its frontal slices is an upper triangular matrix.
\end{definition}

\begin{definition}[Inverse of a Tensor]
Let \(\mathcal{A} \in \mathbb{R}^{n \times n \times n_3}\) be a tensor. The \emph{inverse} of \(\mathcal{A}\) under the t-product, denoted \(\mathcal{A}^{-1} \in \mathbb{R}^{n \times n \times n_3}\), is the unique tensor satisfying
\[
\mathcal{A} * \mathcal{A}^{-1} = \mathcal{I}_n \quad \text{and} \quad \mathcal{A}^{-1} * \mathcal{A} = \mathcal{I}_n,
\]
where \(\mathcal I_n\in\mathbb R^{n\times n\times n_3}\) is the identity tensor of Definition~\ref{def:identity-tensor}. The tensor \(\mathcal{A}\) is said to be \emph{invertible} if such an inverse exists. See Algorithm~\ref{alg:tinverse_half} for its calculation.
\end{definition}

\begin{algorithm}[H]
\caption{Tensor Inverse via Fourier Domain (Half-Slice Version)}
\label{alg:tinverse_half}
\begin{algorithmic}[1]
\Input Tensor $\mathcal{A} \in \mathbb{R}^{n \times n \times n_3}$ (square in first two dimensions);
\Output Tensor $\mathcal{A}^{-1} \in \mathbb{R}^{n \times n \times n_3}$ such that $\mathcal{A} * \mathcal{A}^{-1} = \mathcal{I}_n$;
\State Compute $\widehat{\mathcal{A}} = \texttt{fft}(\mathcal{A}, [], 3)$;
\State Set \(L=\lfloor n_3/2\rfloor+1\).
\For{$i = 1, 2, \ldots, L$}
    \If{$\widehat{A}^{(i)}$ is singular}
        \State \textbf{error}: Tensor is not invertible;
    \Else
        \State $\widehat{A^{-1}}^{(i)} = (\widehat{A}^{(i)})^{-1}$ \Comment{Matrix inverse};
    \EndIf
\EndFor
\For{$i = L+1, \ldots, n_3$}
    \State $\widehat{A^{-1}}^{(i)} = \overline{\widehat{A^{-1}}^{(n_3-i+2)}}$ \Comment{Conjugate symmetry};
\EndFor
\State $\widehat{\mathcal{A}^{-1}} \gets$ assemble from $\{\widehat{A^{-1}}^{(i)}\}_{i=1}^{n_3}$;
\State $\mathcal{A}^{-1} = \texttt{ifft}(\widehat{\mathcal{A}^{-1}}, [], 3)$;
\State \Return $\mathcal{A}^{-1}$;
\end{algorithmic}
\end{algorithm}


\begin{definition}[Pseudoinverse of a Tensor]
Let \(\mathcal{A} \in \mathbb{R}^{n_1 \times n_2 \times n_3}\) be a tensor. The \emph{pseudoinverse} of \(\mathcal{A}\) under the t-product, denoted \(\mathcal{A}^{\dagger} \in \mathbb{R}^{n_2 \times n_1 \times n_3}\), is the unique tensor satisfying the following four Moore-Penrose conditions:
\[
\begin{aligned}
\mathcal{A} * \mathcal{A}^{\dagger} * \mathcal{A} &= \mathcal{A}, \\
\mathcal{A}^{\dagger} * \mathcal{A} * \mathcal{A}^{\dagger} &= \mathcal{A}^{\dagger}, \\
(\mathcal{A} * \mathcal{A}^{\dagger})^* &= \mathcal{A} * \mathcal{A}^{\dagger}, \\
(\mathcal{A}^{\dagger} * \mathcal{A})^* &= \mathcal{A}^{\dagger} * \mathcal{A},
\end{aligned}
\]
where \((\cdot)^*\) denotes the tensor conjugate transpose. If every Fourier slice \(\widehat A^{(\ell)}\) has full column rank, then \(\mathcal A^\dagger=(\mathcal A^\top*\mathcal A)^{-1}*\mathcal A^\top\). If every Fourier slice has full row rank, then \(\mathcal A^\dagger=\mathcal A^\top*(\mathcal A*\mathcal A^\top)^{-1}\). In general, the pseudoinverse is computed slicewise by matrix SVD, as in Algorithm~\ref{alg:tpseudoinverse_half}.
\end{definition}

\begin{algorithm}[H]
\caption{Tensor Pseudoinverse via Fourier Domain (Half-Slice Version)}
\label{alg:tpseudoinverse_half}
\begin{algorithmic}[1]
\Input Tensor $\mathcal{A} \in \mathbb{R}^{n_1 \times n_2 \times n_3}$;
\Output Tensor $\mathcal{A}^{\dagger} \in \mathbb{R}^{n_2 \times n_1 \times n_3}$ (Moore-Penrose pseudoinverse);
\State Compute $\widehat{\mathcal{A}} = \texttt{fft}(\mathcal{A}, [], 3)$;
\State Set \(L=\lfloor n_3/2\rfloor+1\).
\For{$i = 1, 2, \ldots, L$}
    \State Compute $\widehat{A^{\dagger}}^{(i)} = (\widehat A^{(i)})^\dagger$ by a matrix SVD.
\EndFor
\For{$i = L+1, \ldots, n_3$}
    \State $\widehat{A^{\dagger}}^{(i)} = \overline{\widehat{A^{\dagger}}^{(n_3-i+2)}}$ \Comment{Conjugate symmetry};
\EndFor
\State $\widehat{\mathcal{A}^{\dagger}} \gets$ assemble from $\{\widehat{A^{\dagger}}^{(i)}\}_{i=1}^{n_3}$;
\State $\mathcal{A}^{\dagger} = \texttt{ifft}(\widehat{\mathcal{A}^{\dagger}}, [], 3)$;
\State \Return $\mathcal{A}^{\dagger}$;
\end{algorithmic}
\end{algorithm}


With the t-product framework established, we can now define the T-SVD.

\begin{definition}[T-SVD]
Let \(\mathcal X\in\mathbb R^{n_1\times n_2\times n_3}\). A T-SVD of \(\mathcal X\) is
\begin{equation}
\mathcal X=\mathcal U*\mathcal S*\mathcal V^\top,
\end{equation}
where \(\mathcal U\in\mathbb R^{n_1\times n_1\times n_3}\) and \(\mathcal V\in\mathbb R^{n_2\times n_2\times n_3}\) are orthogonal tensors, and \(\mathcal S\in\mathbb R^{n_1\times n_2\times n_3}\) is f-diagonal.
\end{definition}

\begin{remark}[Computation via Fourier Domain]
Compute \(\widehat{\mathcal X}=\texttt{fft}(\mathcal X,[],3)\). For each frequency \(\ell\), take the matrix SVD
\begin{equation}
\widehat X^{(\ell)}=\widehat U^{(\ell)}\widehat S^{(\ell)}(\widehat V^{(\ell)})^H.
\end{equation}
For a real tensor, conjugate Fourier pairs may be coupled so that the inverse FFT of the factors is real. The tensors \(\mathcal U,\mathcal S,\mathcal V\) are then obtained by applying the inverse FFT along the third mode.
\end{remark}

\begin{definition}[Tubal Rank]
The tubal rank of \(\mathcal X\) is the number of nonzero singular tubes of \(\mathcal S\). Equivalently,
\begin{equation}
\operatorname{rank}_t(\mathcal X)=\max_{1\leq \ell\leq n_3}\operatorname{rank}(\widehat X^{(\ell)}).
\end{equation}
Thus, the tubal rank counts nonzero diagonal tubes, not individual scalar entries of the spatial-domain tensor \(\mathcal S\).
\end{definition}

The best tubal-rank-\(r\) approximation in the Frobenius norm is obtained by truncating the T-SVD:
\begin{equation}
\mathcal X_r=\mathcal U(:,1:r,:)*\mathcal S(1:r,1:r,:)*\mathcal V(:,1:r,:)^\top.
\end{equation}
It solves
\begin{equation}
\mathcal X_r\in\argmin_{\operatorname{rank}_t(\mathcal B)\leq r}\|\mathcal X-\mathcal B\|_F,
\end{equation}
and its squared error is
\begin{equation}
\|\mathcal X-\mathcal X_r\|_F^2=\sum_{k=r+1}^{\min(n_1,n_2)}\|\mathcal S(k,k,:)\|_2^2.
\end{equation}
Figure~\ref{fig:TSVD} illustrates the T-SVD and its truncated form.

\begin{figure}[t]
    \centering
\includegraphics[width=0.70\textwidth]{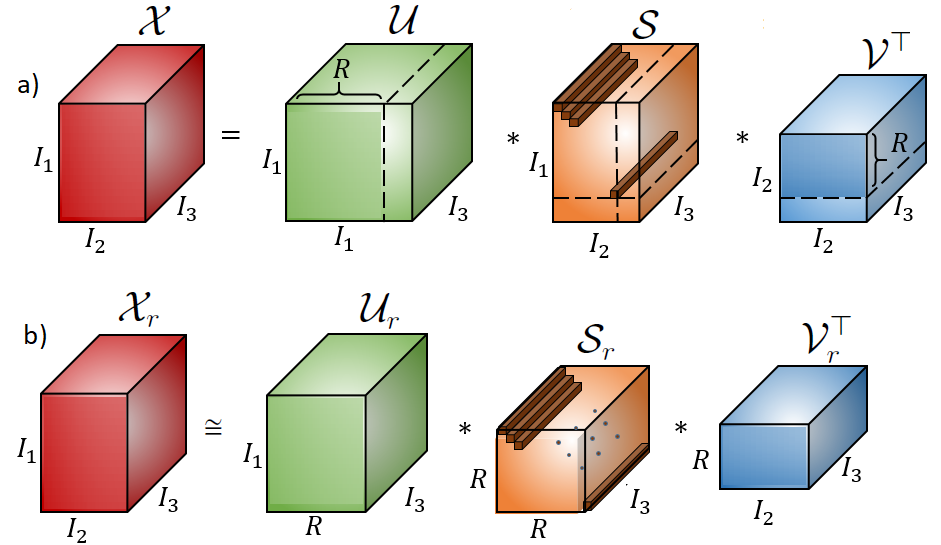}
    \caption{Tensor SVD and its truncated version.}
    \label{fig:TSVD}
\end{figure}

The T-QR decomposition is a factorization that expresses a tensor as the t-product of an orthogonal tensor and an upper triangular tensor.

\begin{definition}[T-QR Decomposition]
Let $\mathcal{X} \in \R^{n_1 \times n_2 \times n_3}$ with $n_1 \ge n_2$. The \emph{T-QR decomposition} of $\mathcal{X}$ is given by
\[
\mathcal{X} = \mathcal{Q} * \mathcal{R},
\]
where:
\begin{itemize}
    \item $\mathcal{Q} \in \R^{n_1 \times n_2 \times n_3}$ is a tensor with orthonormal lateral slices: $\mathcal{Q}^{\top} * \mathcal{Q} = \mathcal{I}$ (the identity tensor of size $n_2 \times n_2 \times n_3$);
    \item $\mathcal{R} \in \R^{n_2 \times n_2 \times n_3}$ is an upper triangular tensor (each frontal slice is upper triangular).
\end{itemize}
If $n_1 > n_2$, the decomposition is often called the \emph{thin} or \emph{economy} T-QR.
\end{definition}

Algorithm~\ref{alg:tqr} summarizes this process.

\begin{algorithm}[H]
\caption{T-QR Decomposition via Householder Transformations}
\label{alg:tqr}
\begin{algorithmic}[1]
\Input Tensor \(\mathcal{X} \in \R^{n_1 \times n_2 \times n_3}\) with \(n_1 \ge n_2\)
\Output Orthogonal tensor \(\mathcal{Q} \in \R^{n_1 \times n_2 \times n_3}\) and upper triangular tensor \(\mathcal{R} \in \R^{n_2 \times n_2 \times n_3}\)
\State Compute \(\widehat{\mathcal{X}} = \texttt{fft}(\mathcal{X}, [], 3)\) and set \(L=\lfloor n_3/2\rfloor+1\).
\For{\(\ell = 1, 2, \ldots, L\)}
    \State \(\widehat A\gets\widehat X^{(\ell)}\).
    \If{\(\ell=1\) or \(\ell=n_3-\ell+2\)}
        \State \([\widehat Q^{(\ell)},\widehat R^{(\ell)}]\gets
        \texttt{householder\_qr}(\operatorname{real}(\widehat A))\).
    \Else
        \State \([\widehat Q^{(\ell)},\widehat R^{(\ell)}]\gets
        \texttt{householder\_qr}(\widehat A)\).
        \State Set \(\ell'=n_3-\ell+2\), \(\widehat Q^{(\ell')}=\overline{\widehat Q^{(\ell)}}\), and \(\widehat R^{(\ell')}=\overline{\widehat R^{(\ell)}}\).
    \EndIf
\EndFor
\State Assemble \(\widehat{\mathcal Q}\) and \(\widehat{\mathcal R}\) from their frontal slices.
\State \(\mathcal Q=\operatorname{real}(\texttt{ifft}(\widehat{\mathcal Q},[],3))\).
\State \(\mathcal R=\operatorname{real}(\texttt{ifft}(\widehat{\mathcal R},[],3))\).
\State \Return \(\mathcal Q,\mathcal R\)
\end{algorithmic}
\end{algorithm}

\section{Adaptive Randomized Pivoting (ARP)}\label{SEc:ARP}
Adaptive Randomized Pivoting (ARP) is a randomized algorithm for the Column Subset Selection Problem (CSSP) proposed by Cortinovis and Kressner \cite{cortinovis2024adaptive}. Given a matrix \(X \in \mathbb{R}^{m \times n}\) and a target rank \(r \ll \min\{m,n\}\), CSSP aims to select \(r\) column indices \(J = (j_1,j_2,\ldots,j_r)\) such that the selected columns \(X(:,J)\) approximately span the column space of \(X\). The approximation error is measured as
\[
\|X - \Pi_J X\|_F,
\]
where
\[
\Pi_J
=
C(C^T C)^\dagger C^T,
\qquad
C=X(:,J),
\]
is the orthogonal projector onto \(\operatorname{span}(X(:,J))\)
and \(\|\cdot\|_F\) denotes the Frobenius norm.

ARP relies on two key ideas:
\begin{enumerate}
    \item \textbf{Subspace sampling}: Sampling probabilities are derived from an orthonormal basis \(V \in \mathbb{R}^{n \times r}\) that approximates the row space of \(X\);
    \item \textbf{Adaptive updating}: After each selection, the basis is orthogonalized against the selected row, making the sampling adaptive.
\end{enumerate}

The index selection uses only the basis \(V\), not the matrix \(X\). This is useful in the Discrete Empirical Interpolation Method (DEIM)~\cite{chaturantabut2010nonlinear}, where the function to be approximated is not available during index selection.

The ARP algorithm processes the orthonormal basis \(V \in \mathbb{R}^{n \times r}\) iteratively. At each step \(k = 1,2,\ldots,r\), it samples a row index \(j_k\) with probability proportional to the squared norm of the current row, then removes that row's contribution via an orthogonal projection. Algorithm~\ref{alg:arp} presents the stable Householder-based implementation.

\begin{algorithm}[H]
\caption{Adaptive Randomized Pivoting for CSSP (ARP)}
\label{alg:arp}
\begin{algorithmic}[1]
\Input  Orthonormal basis \(V \in \mathbb{R}^{n_2 \times r}\) (row space approximation);
\Output Index set \(J = (j_1,j_2,\ldots,j_r)\)
\State Initialize \(J = ()\) and \(W_0 = V\);
\For{\(k = 1,2,\ldots,r\)}
    \State Compute probabilities: \(p_j = \frac{\|W_{k-1}(j, k:r)\|_2^2}{r - k + 1}\) for \(j = 1,2,\ldots,n_2\);
    \State Sample \(j_k \sim \operatorname{Categorical}(p_1,p_2,\ldots,p_{n_2})\);
    \State Append \(j_k\) to \(J\);
    \State Update \(W_k \leftarrow W_{k-1} Q_k\), where \(Q_k\) is a Householder reflector that annihilates \(W_{k-1}(j_k, k+1:r)\);
\EndFor
\State \Return \(J\)
\end{algorithmic}
\end{algorithm}

The algorithm maintains a transformed basis \(W_k\) that remains orthonormal throughout. The probabilities in Line~3 satisfy \(\sum_{j=1}^{n_2} p_j = 1\) because \(\|W_{k-1}(:,k:r)\|_F^2 = r - k + 1\). The Householder update ensures numerical stability and prevents the same index from being selected twice.

\begin{algorithm}[H]
\caption{Adaptive Randomized Pivoting for cross approximation (ARP-cross)}
\label{alg:arpcross}
\begin{algorithmic}[1]
\Input Matrix \(X \in \mathbb{R}^{n_1 \times n_2}\) and an orthonormal basis \(V \in \mathbb{R}^{n_2 \times r}\) defining a row space approximation;
\Output Index sets \(I = (i_1,i_2,\ldots,i_r)\) and \(J = (j_1,j_2,\ldots,j_r)\) defining a cross approximation \(X \approx X(:,J) X(I,J)^{-1} X(I,:)\);
\State Obtain index set \(J\) by applying Algorithm \ref{alg:arp} to \(V\);
\State Compute an orthonormal basis \(Q_J\) of \(X(:,J)\) by a QR decomposition;
\State Obtain index set \(I\) by applying Algorithm \ref{alg:arp} to \(Q_J\);
\State \Return \(I,J\)
\end{algorithmic}
\end{algorithm}

The main result of Cortinovis and Kressner is an \((r+1)\) bound for the expected squared projection error. Define the oblique projector
\[
\widetilde{\Pi}_J = I - E_J (V^T E_J)^{-1} V^T,
\]
where \(E_J = [e_{j_1},e_{j_2}, \ldots, e_{j_r}]\). The next theorem summarizes the main results shown in \cite{cortinovis2024adaptive}.

\begin{theorem}[Cortinovis \& Kressner, 2026]\label{th_cor_kres}
Let \(X \in \mathbb{R}^{n_1 \times n_2}\) and let \(V \in \mathbb{R}^{n_2 \times r}\) be an orthonormal basis. The random index set \(J\) returned by Algorithm~\ref{alg:arp} satisfies

{\color{black}
\[
\mathbb{E}\left[\|X-\Pi_JX\|_F^2\right]
\leq
\mathbb{E}\left[
\|X-X(:,J)V(J,:)^{-T}V^T\|_F^2
\right]
=
(r+1)\|X-XVV^T\|_F^2 .
\]
}
\end{theorem}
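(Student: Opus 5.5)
The plan is to prove the inequality and the equality separately. Throughout I write \(V_J:=V(J,:)=E_J^TV\in\mathbb R^{r\times r}\), so that \(V(J,:)^{-T}V^T=(V^TE_J)^{-1}V^T\).

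\textbf{The inequality.} It is pointwise, not only in expectation. The matrix \(X(:,J)V_J^{-T}V^T=XE_J(V^TE_J)^{-1}V^T\) has all its columns in \(\operatorname{span}(X(:,J))\). The orthogonal projection \(\Pi_JX\) is the best Frobenius-norm approximation of \(X\) with columns in that span. Hence \(\|X-\Pi_JX\|_F^2\le\|X-X(:,J)V_J^{-T}V^T\|_F^2\) for every realization with \(V_J\) invertible. Taking expectations gives the first relation. Invertibility of \(V_J\) almost surely follows from the sampling law established below.

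\textbf{Reducing the equality to a scalar identity.} Set \(P=E_J(V^TE_J)^{-1}V^T\) and \(E=X(I-VV^T)\). Since \(V^T(I-P)=V^T-V^TE_J(V^TE_J)^{-1}V^T=0\), splitting \(X=XVV^T+E\) gives \(X(I-P)=E(I-P)=E-EE_JV_J^{-1}V^T\). The rows of \(EE_JV_J^{-1}V^T\) lie in \(\operatorname{range}(V)^T\), while the rows of \(E\) are orthogonal to it because \(EV=0\). Pythagoras then gives
\[
\|X-XP\|_F^2=\|E\|_F^2+\|EE_JV_J^{-1}\|_F^2 .
\]
Working row by row with rows \(e^T\) of \(E\), where \(V^Te=0\), it suffices to show \(\mathbb E\|V_J^{-1}e_J\|_2^2=r\|e\|_2^2\). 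This yields \((1+r)\|E\|_F^2=(r+1)\|X-XVV^T\|_F^2\).

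\textbf{Identifying the law of \(J\) (the main obstacle).} I must show that the ordered tuple \(J\) produced by Algorithm~\ref{alg:arp} satisfies \(\Pr(J)=\det(V_J)^2/r!\), i.e.\ ARP is sequential sampling of the projection DPP with kernel \(VV^T\).
\begin{itemize}
\item In exact arithmetic the Householder updates keep \(W_{k-1}(:,k:r)\) an orthonormal basis of the part of \(\operatorname{range}(V)\) orthogonal to the already chosen rows.
\item Consequently \(\|W_{k-1}(j,k:r)\|^2\) equals the squared distance of the row \(V(j,:)\) to the span of the previously selected rows \(V(j_1,:),\dots,V(j_{k-1},:)\).
\item The product of these squared distances over \(k=1,\dots,r\) is the Gram determinant \(\det(V_JV_J^T)=\det(V_J)^2\), by Gram--Schmidt.
\item The normalizations contribute \(\prod_k(r-k+1)=r!\).
\end{itemize}
In particular \(\det V_J\neq0\) on the support, so \(V_J\) is invertible almost surely. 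Checking the invariant of the Householder update carefully is the step I expect to need the most care.

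\textbf{Computing the expectation.} With the law of \(J\) in hand, I apply Cramer's rule: \((V_J^{-1}e_J)_k=\det(M^{(k)}_J)/\det V_J\), where \(M^{(k)}\) is \(V\) with column \(k\) replaced by \(e\). Then
\[
\mathbb E\|V_J^{-1}e_J\|^2=\sum_k\frac{1}{r!}\sum_{J\text{ ordered}}\det(M^{(k)}_J)^2=\sum_k\det\bigl(M^{(k)T}M^{(k)}\bigr),
\]
where the last step is Cauchy--Binet; the \(r!\) orderings of each row subset cancel the \(1/r!\). Because \(e\perp\operatorname{range}(V)\) and \(V\) has orthonormal columns, \(M^{(k)T}M^{(k)}\) is diagonal with entries \(1,\dots,1\) and \(\|e\|^2\). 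So each term equals \(\|e\|^2\), and the sum is \(r\|e\|^2\), which completes the equality.
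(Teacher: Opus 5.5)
Your route differs from the paper's, and most of it is sound. The pointwise inequality is correct. The Pythagorean reduction is correct once a slip is fixed: since $V^TE_J=V_J^T$, the display should read $E-EE_JV_J^{-T}V^T$ and $\|EE_JV_J^{-T}\|_F^2$, and your row-wise quantity $\|V_J^{-1}e_J\|_2^2$ is then the right one. The law $\Pr(J)=\det(V_J)^2/r!$, which identifies ARP as sequential projection-DPP sampling, is also correct.

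\textbf{The gap is in the final computation.} The expectation runs only over the support of $J$, i.e.\ ordered tuples with $\det V_J\neq0$. So Cramer's rule actually gives
\[
\mathbb E\|V_J^{-1}e_J\|_2^2=\frac{1}{r!}\sum_{k=1}^r\ \sum_{J:\,\det V_J\neq 0}\det\bigl(M^{(k)}_J\bigr)^2 .
\]
Cauchy--Binet requires the sum over \emph{all} $J$. A tuple with $\det V_J=0$ can still have $\det M^{(k)}_J\neq0$, because replacing a column of a singular $V_J$ by $e_J$ may restore full rank. Discarding these terms gives only $\mathbb E\|V_J^{-1}e_J\|_2^2\le r\|e\|_2^2$.

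\textbf{This cannot be repaired, because the equality in the statement is false without a nondegeneracy assumption.} Take $n_1=n_2=2$, $r=1$, $V=e_1$, $X=I_2$. ARP selects $J=(1)$ with probability one, and $X(:,J)V(J,:)^{-T}V^T=e_1e_1^T$. Hence $\mathbb E\|X-X(:,J)V(J,:)^{-T}V^T\|_F^2=1$, while $(r+1)\|X-XVV^T\|_F^2=2$. In your computation, the row $e^T=(0,1)$ of $E$ has $\|V_J^{-1}e_J\|_2^2=0$. The Cauchy--Binet sum $0^2+1^2=1$ comes entirely from the probability-zero tuple $J=(2)$. With $X=\operatorname{diag}(1,\epsilon)$ the same happens even though $V$ is then the dominant right singular vector.

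What your argument does establish is the correct bound
\[
\mathbb E\|X-X(:,J)V(J,:)^{-T}V^T\|_F^2\le(r+1)\|X-XVV^T\|_F^2 .
\]
Equality holds whenever every $r\times r$ submatrix of $V$ with distinct rows is nonsingular, e.g.\ for generic $V$. This is consistent with the paper's aligned enumeration experiment, where the factor $4=r+1$ is attained.

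The paper does not reprove this theorem; it cites it. Its proof of the tensor analogue, Theorem~\ref{thm:tarp-error}, reduces to this matrix statement for $p=1$ and $\eta_k=1$. That proof loses exactly the same terms at the step $\sum_{\{j:q_{j,k}>0\}}\|\widehat R^{(k-1,\ell)}(:,j)\|_2^2\le\|\widehat R^{(k-1,\ell)}\|_F^2$, and accordingly claims only ``$\le$''.

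Apart from this, your DPP route is a legitimate alternative to the paper's one-step recursion. It gives a closed form on the support and explains when $r+1$ is attained. However, it depends on the joint law being a DPP, which is precisely what is lost for common-index T-ARP with misaligned frequencies. The recursion needs only a per-step probability ratio.
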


Several important consequences follow:
\begin{itemize}
    \item By Jensen's inequality, \(\mathbb{E}(\|X - \Pi_J X\|_F) \le \sqrt{r+1} \|X - X V V^T\|_F\).
    \item When \(V = V_{\mathrm{opt}}\) (the right singular vectors of \(X\)), the bound becomes \((r+1)(\sigma_{r+1}^2 + \cdots + \sigma_n^2)\), matching the optimal existence result of Deshpande et al.~\cite{deshpande2006matrix}.
    \item Markov's inequality yields tail bounds: with probability \(\ge 0.99\), the error is at most \(10\sqrt{r+1}\|X - X V V^T\|_F\).
\end{itemize}

The ARP framework extends naturally to several related problems:
\begin{itemize}
    \item DEIM (Discrete Empirical Interpolation Method): For a function \(f \approx V V^T f\), ARP selects indices \(I\) such that \(f \approx V (E_I^T V)^{-1} f(I)\). Corollary~3.1 of the paper shows
\[
\mathbb{E}[\|(E_I^T V)^{-1}\|_F^2] = r(n_2 - r + 1), \quad
\mathbb{E}[\|(E_I^T V)^{-1}\|_2^2] \le 1 + r(n_2 - r).
\]

\item Cross (Skeleton) Approximation: For general matrices, selecting both rows and columns yields the approximation \(X \approx X(:,J) X(I,J)^{-1} X(I,:)\). Using ARP twice (once with \(V\) for columns, once with an orthonormal basis of \(X(:,J)\) for rows) gives
\[
\mathbb{E}[\|X - X(:,J)X(I,J)^{-1}X(I,:)\|_F^2] \le (r+1)^2 \|X - X V V^T\|_F^2.
\]

\item Nyström Approximation for SPSD Matrices: For symmetric positive semidefinite \(X\), choosing \(I=J\) yields the Nyström approximation. The Gram correspondence \(\|X - X(:,J)X(J,J)^{-1}X(J,:)\|_* = \|(I - \Pi_J) B\|_F^2\) (where \(B^T B = X\) and \(\|\cdot\|_*\) is the nuclear norm) leads to
\[
\mathbb{E}[\|X - X(:,J)X(J,J)^{-1}X(J,:)\|_*] \le (r+1) \|(I - V V^T) X (I - V V^T)\|_*.
\]
\end{itemize}

The Householder-based implementation of ARP (Algorithm~\ref{alg:arp}) requires \(\mathcal{O}(n_2 r^2)\) operations. This is significantly cheaper than the deterministic derandomized version (Osinsky's algorithm), which requires \(\mathcal{O}(n_1 n_2 r)\) operations and full access to \(X\). The lower cost makes ARP suitable for large-scale problems where \(X\) cannot be accessed repeatedly.

A deterministic version is obtained by replacing the random sampling step with a greedy selection:
\[
j_k \in \arg\min_j \frac{\|\widetilde{X}_{k-1}(:,j)\|_2^2}{\|W_{k-1}(j,k:r)\|_2^2},
\]
where \(\widetilde{X}_k\) is a residual matrix. This recovers Osinsky's algorithm \cite{osinsky2023close} and guarantees
\[
\|X - X(:,J)V(J,:)^{-T}V^T\|_F^2 \le (r+1)\|X - X V V^T\|_F^2
\]
deterministically, at the cost of higher computational complexity.

For the SPSD case, the authors derive a novel deterministic algorithm (Algorithm~5.1) that avoids explicit computation of a square root factor \(B\) by operating directly on \(X\) while maintaining \(\mathcal{O}(n_2 r^2)\) complexity plus the cost of forming \(XV\).

\begin{remark}
The proof of Theorem~\ref{th_cor_kres} also applies to complex matrices after replacing transposes by conjugate transposes and Euclidean inner products by Hermitian inner products. We use this complex version for ARP-T-CUR in the Fourier domain.  
\end{remark}

\section{
\texorpdfstring
{Adaptive randomized pivoting in the t-product framework}
{Adaptive randomized pivoting in the t-product framework}
}\label{sec:tsvd}
This section presents two extensions of ARP to tensors in the t-product framework. ARP-T-CUR applies matrix ARP-cross at the nonredundant Fourier frequencies and couples conjugate pairs. T-ARP instead selects common lateral and horizontal slices, using the same indices at every Fourier frequency.

\color{black}

\subsection{\texorpdfstring{Fourier-slicewise ARP-T-CUR}{Fourier-slicewise ARP-T-CUR}}
\label{subsec:arp_t_cur}

We first describe a Fourier-slicewise extension of ARP. After applying the FFT along the third mode, the t-product decouples into matrix products on the frontal slices. Matrix ARP-cross can therefore be applied at each nonredundant frequency. The resulting method is a Fourier-domain cross approximation. Its indices may depend on frequency, so it does not in general select common lateral and horizontal slices in the original tensor domain.

\begin{algorithm}[H]
\caption{Fourier-slicewise ARP-T-CUR}
\label{alg:arp-tsvd}
\begin{algorithmic}[1]
\Input Tensor \(\mathcal X\in\mathbb R^{n_1\times n_2\times n_3}\), target rank \(r\), and orthonormal bases \(V^{(\ell)}\in\mathbb C^{n_2\times r}\) for the nonredundant Fourier frequencies, chosen compatibly with conjugate symmetry.
\Output Approximation \(\mathcal X_{\mathrm{TCUR}}\in\mathbb R^{n_1\times n_2\times n_3}\).
\State Compute \(\widehat{\mathcal X}=\texttt{fft}(\mathcal X,[],3)\).
\State Set \(L=\lfloor n_3/2\rfloor+1\).
\For{\(\ell=1,2,\ldots,L\)}
    \State Apply matrix ARP-cross to \((\widehat X^{(\ell)},V^{(\ell)})\), obtaining \(I_\ell,J_\ell\), each of cardinality \(r\).
    \State Form \(\widehat C^{(\ell)}=\widehat X^{(\ell)}(:,J_\ell)\), \(\widehat R^{(\ell)}=\widehat X^{(\ell)}(I_\ell,:)\).
    \State Assuming \(\widehat X^{(\ell)}(I_\ell,J_\ell)\) is nonsingular, set
    \[
    \widehat X_{\mathrm{TCUR}}^{(\ell)}
    =\widehat C^{(\ell)}\bigl(\widehat X^{(\ell)}(I_\ell,J_\ell)\bigr)^{-1}\widehat R^{(\ell)}.
    \]
    \State Let \(\ell'=n_3-\ell+2\). If \(\ell'\neq \ell\) and \(2\leq \ell'\leq n_3\), set \(\widehat X_{\mathrm{TCUR}}^{(\ell')}=\overline{\widehat X_{\mathrm{TCUR}}^{(\ell)}}\).
\EndFor
\State Set \(\mathcal X_{\mathrm{TCUR}}=\operatorname{real}(\texttt{ifft}(\widehat{\mathcal X}_{\mathrm{TCUR}},[],3))\).
\State \Return \(\mathcal X_{\mathrm{TCUR}}\).
\end{algorithmic}
\end{algorithm}

The bases may be obtained by an exact SVD, a randomized range finder, or another row-space approximation. For paired frequencies \(\ell' = n_3-\ell+2\), we use \(V^{(\ell')}=\overline{V^{(\ell)}}\) and the same random choices. At the self-conjugate DC frequency, and also at the Nyquist frequency when \(n_3\) is even, the basis and slicewise approximation are chosen real.

\begin{theorem}[Error bound for Fourier-slicewise ARP-T-CUR]
\label{thm:arp-tsvd-error}
Let \(\mathcal X\in\mathbb R^{n_1\times n_2\times n_3}\), and let \(V^{(\ell)}\in\mathbb C^{n_2\times r}\) be orthonormal bases satisfying the conjugate-pair convention above. Assume that, for every nonredundant frequency, the selected column matrix has rank \(r\) and the ARP-cross intersection \(\widehat X^{(\ell)}(I_\ell,J_\ell)\) is nonsingular almost surely. Then the output of Algorithm~\ref{alg:arp-tsvd} satisfies
\[
\mathbb E\|\mathcal X-\mathcal X_{\mathrm{TCUR}}\|_F^2
\leq
\frac{(r+1)^2}{n_3}\sum_{\ell=1}^{n_3}
\|\widehat X^{(\ell)}-\widehat X^{(\ell)}V^{(\ell)}(V^{(\ell)})^H\|_F^2.
\]
If \(V^{(\ell)}\) contains the leading \(r\) right singular vectors of \(\widehat X^{(\ell)}\) for every frequency, then
\[
\mathbb E\|\mathcal X-\mathcal X_{\mathrm{TCUR}}\|_F^2
\leq (r+1)^2\|\mathcal X-\mathcal X_r\|_F^2.
\]
\end{theorem}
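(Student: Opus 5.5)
The plan is to reduce everything to the matrix ARP-cross bound via Parseval's identity. First, by construction, the Fourier transform of \(\mathcal X_{\mathrm{TCUR}}\) (before taking the real part) has slices \(\widehat X_{\mathrm{TCUR}}^{(\ell)}\) satisfying conjugate symmetry \(\widehat X_{\mathrm{TCUR}}^{(n_3-\ell+2)}=\overline{\widehat X_{\mathrm{TCUR}}^{(\ell)}}\). For the paired frequencies this holds by the explicit assignment in the algorithm. For the self-conjugate frequencies (DC, and Nyquist when \(n_3\) is even) it holds because there the bases and slicewise approximations are chosen real. Hence \(\texttt{ifft}(\widehat{\mathcal X}_{\mathrm{TCUR}},[],3)\) is already real, the \(\operatorname{real}(\cdot)\) step does nothing, and Parseval gives
\[
\|\mathcal X-\mathcal X_{\mathrm{TCUR}}\|_F^2=\frac1{n_3}\sum_{\ell=1}^{n_3}\|\widehat X^{(\ell)}-\widehat X^{(\ell)}_{\mathrm{TCUR}}\|_F^2 .
\]

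Next, I will take expectations term by term using linearity. The paired slices satisfy \(\|\widehat X^{(\ell')}-\widehat X^{(\ell')}_{\mathrm{TCUR}}\|_F=\|\widehat X^{(\ell)}-\widehat X^{(\ell)}_{\mathrm{TCUR}}\|_F\), since \(\widehat X^{(\ell')}=\overline{\widehat X^{(\ell)}}\) for real \(\mathcal X\). Likewise, \(\|\widehat X^{(\ell')}-\widehat X^{(\ell')}V^{(\ell')}(V^{(\ell')})^H\|_F\) equals the corresponding quantity at \(\ell\), because \(V^{(\ell')}=\overline{V^{(\ell)}}\). It therefore suffices to show, for each nonredundant \(\ell\),
\[
\mathbb E\|\widehat X^{(\ell)}-\widehat X^{(\ell)}_{\mathrm{TCUR}}\|_F^2\le (r+1)^2\|\widehat X^{(\ell)}-\widehat X^{(\ell)}V^{(\ell)}(V^{(\ell)})^H\|_F^2 .
\]
This is exactly the ARP-cross bound recalled after Theorem~\ref{th_cor_kres}, applied in its complex version (Remark after Theorem~\ref{th_cor_kres}) to the matrix \(\widehat X^{(\ell)}\) with basis \(V^{(\ell)}\). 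The hypotheses of the theorem supply what that bound needs: the intersection \(\widehat X^{(\ell)}(I_\ell,J_\ell)\) is nonsingular almost surely, and the selected columns have rank \(r\), so that \(Q_J\) has \(r\) columns. Summing over \(\ell=1,\dots,n_3\) and dividing by \(n_3\) gives the first inequality. Correlation between frequencies plays no role, since only expectations of a sum are involved.

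The step I expect to need the most care is justifying the ARP-cross bound itself, since the paper only quotes it. If needed, I would reprove it by the standard two-stage splitting. Write \(C=\widehat X^{(\ell)}(:,J)\) and let \(\Pi_I\) denote the oblique row interpolation built from \(Q_J\). The error \(\widehat X^{(\ell)}-C\,\widehat X^{(\ell)}(I,J)^{-1}\widehat X^{(\ell)}(I,:)\) then equals \((I-Q_J(E_I^HQ_J)^{-1}E_I^H)\,\widehat X^{(\ell)}\). Applying Theorem~\ref{th_cor_kres} (transposed) conditionally on \(J\) gives conditional expectation at most \((r+1)\|\widehat X^{(\ell)}-\Pi_J\widehat X^{(\ell)}\|_F^2\). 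Taking the expectation over \(J\) and applying Theorem~\ref{th_cor_kres} again gives the factor \((r+1)^2\).

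Finally, for the second claim, take \(V^{(\ell)}\) to be the leading \(r\) right singular vectors at each frequency. Then \(\|\widehat X^{(\ell)}-\widehat X^{(\ell)}V^{(\ell)}(V^{(\ell)})^H\|_F^2\) is the tail \(\sum_{k>r}\sigma_k(\widehat X^{(\ell)})^2\). By Parseval and the slicewise construction of the truncated T-SVD, \(\frac1{n_3}\sum_\ell\) of these tails equals \(\|\mathcal X-\mathcal X_r\|_F^2\). This uses only that the Fourier slices of \(\mathcal X_r\) are the rank-\(r\) truncations of \(\widehat X^{(\ell)}\), with the conjugate-pair choice of singular vectors keeping them consistent.
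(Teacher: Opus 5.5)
Your proposal is correct and follows the paper's proof. You apply the complex matrix ARP-cross bound at each nonredundant frequency, transfer it to the conjugate partner because the two errors have equal Frobenius norm, sum using Parseval's identity, and identify the averaged slicewise SVD tails with \(\|\mathcal X-\mathcal X_r\|_F^2\). Your additional checks, namely that conjugate symmetry makes the \(\operatorname{real}(\cdot)\) step vacuous and the conditional two-stage derivation of the \((r+1)^2\) cross bound, are details the paper leaves implicit and do not change the argument.
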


\begin{proof}
For each nonredundant frequency, the approximation is exactly the matrix ARP-cross approximation. Hence the matrix result gives
\[
\mathbb E\|\widehat X^{(\ell)}-\widehat X_{\mathrm{TCUR}}^{(\ell)}\|_F^2
\leq (r+1)^2\|\widehat X^{(\ell)}-\widehat X^{(\ell)}V^{(\ell)}(V^{(\ell)})^H\|_F^2.
\]
The same estimate holds for its conjugate partner because the two approximation errors have the same Frobenius norm. Summing over all frequencies and using Parseval's identity proves the first claim. For leading right singular vectors, the sum on the right is the Fourier representation of the truncated T-SVD error, which proves the second claim.
\end{proof}

Thus, the factor \((r+1)^2\) concerns the expected \emph{squared} Frobenius error. Equivalently, Jensen's inequality gives an \((r+1)\)-factor for the expected Frobenius norm.

\subsection{\texorpdfstring{Common-index T-ARP}{Common-index T-ARP}}

\color{black}
We now describe the common-index extension. In contrast with ARP-T-CUR, which works at the nonredundant Fourier frequencies, T-ARP selects the same indices for the whole tensor. More precisely, for a tensor
\[
\mathcal X \in \mathbb R^{n_1\times n_2\times n_3},
\]
we aim to select lateral slices indexed by
\[
J=(j_1,j_2,\ldots,j_r)\subset \{1,2,\ldots,n_2\},
\]
and horizontal slices indexed by
\[
I=(i_1,i_2,\ldots,i_r)\subset \{1,2,\ldots,n_1\}.
\]
The corresponding tensor cross approximation has the form
\[
\mathcal X
\approx
\mathcal X(:,J,:)
*
\mathcal X(I,J,:)^\dagger
*
\mathcal X(I,:,:),
\]
where the pseudoinverse is taken in the t-product sense. If the intersection tensor \(\mathcal X(I,J,:)\) is invertible, then \(\mathcal X(I,J,:)^\dagger\) can be replaced by \(\mathcal X(I,J,:)^{-1}\). We refer to this common-index extension as T-ARP; see Figure~\ref{fig:TCUR} for an illustration.
\color{black}

\begin{figure}[t]
    \centering
\includegraphics[width=0.70\textwidth]{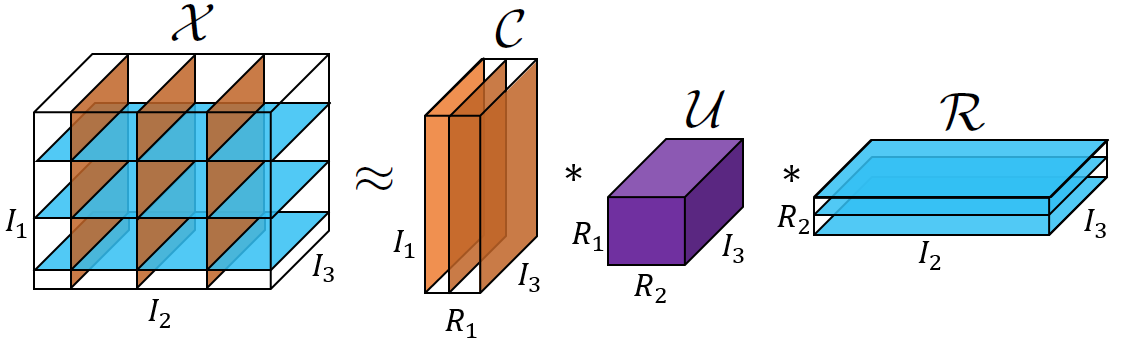}
    \caption{Tensor approximation based on sampling lateral and horizontal slices.}
    \label{fig:TCUR}
\end{figure}

Similar to the matrix case, Algorithm~\ref{alg:tarp_prototype} can be used in two stages to build a tensor cross approximation. We first use a right basis to select lateral slices. We then compute an orthonormal basis of the selected lateral slices and apply T-ARP again to select horizontal slices. This gives the approximation described in Algorithm~\ref{alg:tcross_compact}.

For a fixed index set \(J\), let
\[
\mathcal C
=
\mathcal X(:,J,:),
\]
and let \(\mathcal Q_J\) be an orthonormal basis of \(\operatorname{span}_t(\mathcal C)\), that is,
\[
\mathcal Q_J^\top * \mathcal Q_J
=
\mathcal I_r.
\]
The corresponding orthogonal projection error is
\[
\left\|
\mathcal X
-
\mathcal Q_J * \mathcal Q_J^\top * \mathcal X
\right\|_F^2.
\]
By the optimality of the truncated T-SVD, this error is always bounded below by the best tubal-rank-\(r\) approximation error:
\[
\left\|
\mathcal X
-
\mathcal X_r
\right\|_F^2
=
\sum_{k=r+1}^{\min(n_1,n_2)}
\left\|
\mathcal S(k,k,:)
\right\|_2^2
\leq
\left\|
\mathcal X
-
\mathcal Q_J * \mathcal Q_J^\top * \mathcal X
\right\|_F^2.
\]

\begin{algorithm}[H]
\caption{Tubal Adaptive Randomized Pivoting (T-ARP)}
\label{alg:tarp_prototype}
\begin{algorithmic}[1]
\Input A tensor \(\mathcal V\in\mathbb R^{n\times r\times p}\) satisfying \(\mathcal V^\top*\mathcal V=\mathcal I_r\).
\Output An index set \(J=(j_1,j_2,\ldots,j_r)\).
\State Initialize \(J_0=()\) and \(\mathcal W^{(0)}=\mathcal V\).
\For{\(k=1,2,\ldots,r\)}
    \State Compute \(\omega_j^{(k)}=\|\mathcal W^{(k-1)}(j,:,:)\|_F^2\), \(j=1,2,\ldots,n\).
    \State Set \(p_j^{(k)}=\omega_j^{(k)}/\sum_{i=1}^n\omega_i^{(k)}\), and sample \(j_k\) according to these probabilities.
    \State Set \(J_k=(J_{k-1},j_k)\) and \(\mathcal Z_k=\mathcal W^{(k-1)}(j_k,:,:)\).
    \State Update
    \[
    \mathcal W^{(k)}=\mathcal W^{(k-1)}*\bigl(\mathcal I_r-\mathcal Z_k^\dagger*\mathcal Z_k\bigr).
    \]
\EndFor
\State \Return \(J=J_r\).
\end{algorithmic}
\end{algorithm}

\begin{algorithm}[H]
\caption{T-Cross approximation based on T-ARP}
\label{alg:tcross_compact}
\begin{algorithmic}[1]
\Input A tensor \(\mathcal X\in\mathbb R^{n_1\times n_2\times n_3}\), target rank \(r\leq\min(n_1,n_2)\), and a right basis \(\mathcal V_R\in\mathbb R^{n_2\times r\times n_3}\) with \(\mathcal V_R^\top*\mathcal V_R=\mathcal I_r\).
\Output A tensor cross approximation \(\widehat{\mathcal X}\).
\State \(J\leftarrow\mathrm{T\text{-}ARP}(\mathcal V_R,r)\), and set \(\mathcal C=\mathcal X(:,J,:)\).
\State Compute \(\mathcal C=\mathcal Q_J*\mathcal R_J\), with \(\mathcal Q_J^\top*\mathcal Q_J=\mathcal I_r\).
\State \(I\leftarrow\mathrm{T\text{-}ARP}(\mathcal Q_J,r)\), and set \(\mathcal R=\mathcal X(I,:,:)\).
\State Set \(\mathcal U=\mathcal X(I,J,:)^\dagger\) and \(\widehat{\mathcal X}=\mathcal C*\mathcal U*\mathcal R\).
\State \Return \(\widehat{\mathcal X}\).
\end{algorithmic}
\end{algorithm}

The recursive update is convenient both for the analysis and for an FFT implementation. A Householder implementation may be used to represent the same sequence of residual subspaces more stably; the theory below is written for the exact algebraic update.

The next lemma gives the compact form of the recursive update.

\begin{lemma}[Compact representation of the T-ARP updates]
\label{lem:tarp-compact}
Let \(J_k=(j_1,\ldots,j_k)\), \(\mathcal E_{J_k}=\mathcal I_n(:,J_k,:)\), and \(\mathcal A_k=\mathcal E_{J_k}^\top*\mathcal V\). Then the iterates of Algorithm~\ref{alg:tarp_prototype} satisfy
\[
\mathcal W^{(k)}=\mathcal V*\bigl(\mathcal I_r-\mathcal A_k^\dagger*\mathcal A_k\bigr),\qquad k=0,1,\ldots,r.
\]
Let \(\widehat W^{(k,\ell)}\) denote the \(\ell\)-th frontal slice of the Fourier transform of \(\mathcal W^{(k)}\). Equivalently, for every frequency \(\ell\),
\[
\widehat W^{(k,\ell)}=\widehat V^{(\ell)}\left[I_r-\bigl(E_{J_k}^H\widehat V^{(\ell)}\bigr)^\dagger\bigl(E_{J_k}^H\widehat V^{(\ell)}\bigr)\right].
\]
\end{lemma}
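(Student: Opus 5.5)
Since the t-product, transpose, pseudoinverse and slice extraction all act frequency by frequency in the Fourier domain, the plan is to prove the slicewise formula first. The tensor identity then follows by applying the inverse FFT. In the Fourier domain, \(\mathcal E_{J_k}=\mathcal I_n(:,J_k,:)\) has every Fourier slice equal to the real selection matrix \(E_{J_k}\), because the identity tensor has constant unit tubes in frequency. Hence \(\widehat{A}_k^{(\ell)}=E_{J_k}^H\widehat V^{(\ell)}\). The pseudoinverse is computed slicewise, as in Algorithm~\ref{alg:tpseudoinverse_half}, so the two stated forms are equivalent. It therefore suffices to fix a frequency \(\ell\), write \(V=\widehat V^{(\ell)}\) (with \(V^HV=I_r\)), \(W_k=\widehat W^{(k,\ell)}\), \(A_k=E_{J_k}^HV\), and \(P_k=I_r-A_k^\dagger A_k\), and to show \(W_k=VP_k\) by induction on \(k\).

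The base case \(k=0\) is immediate: \(J_0\) is empty, so \(P_0=I_r\) and \(W_0=V\). For the inductive step, assume \(W_{k-1}=VP_{k-1}\). The row used by the algorithm is \(z_k=e_{j_k}^HW_{k-1}=a_kP_{k-1}\), where \(a_k=e_{j_k}^HV\) is the new row of \(A_k\). The update gives \(W_k=VP_{k-1}(I_r-z_k^\dagger z_k)\). The claim thus reduces to the matrix identity
\[
P_{k-1}\bigl(I_r-z_k^\dagger z_k\bigr)=P_k .
\]
This is the statement that the orthogonal projector onto \(\ker A_k=\ker A_{k-1}\cap\ker a_k\) can be obtained by first projecting onto \(\ker A_{k-1}\) and then removing the direction \(z_k^H\).

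To prove this identity, note that \(z_k^H=P_{k-1}a_k^H\) lies in \(\operatorname{range}P_{k-1}=\ker A_{k-1}\). The rank-one projector \(z_k^\dagger z_k\) therefore commutes with \(P_{k-1}\) and satisfies \(z_k^\dagger z_k\le P_{k-1}\). If \(z_k=0\), we take \(z_k^\dagger z_k=0\). Consequently \(P_{k-1}(I_r-z_k^\dagger z_k)=P_{k-1}-z_k^\dagger z_k\) is an orthogonal projector onto \(\ker A_{k-1}\cap (z_k^H)^\perp\). It remains to show this subspace equals \(\ker A_k\). A vector \(x\in\ker A_{k-1}\) satisfies \(P_{k-1}x=x\), so \(z_kx=a_kP_{k-1}x=a_kx\). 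Thus, within \(\ker A_{k-1}\), the condition \(z_kx=0\) is equivalent to \(a_kx=0\). Since \(A_k^\dagger A_k\) is the orthogonal projector onto \(\operatorname{range}(A_k^H)=(\ker A_k)^\perp\), we have \(P_k\) equal to the projector onto \(\ker A_k\), which closes the induction.

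The main obstacle is handling degenerate cases without invertibility assumptions. These include \(z_k=0\) (a selected row dependent on earlier ones, which can occur at some frequency even when sampling has positive probability overall) and rank deficiency of \(A_k\) at some frequencies. The argument above avoids inverses entirely by working with orthogonal projectors and pseudoinverses, so both cases are covered. A second point to check is that the tensor update \(\mathcal Z_k^\dagger*\mathcal Z_k\) has Fourier slices \(z_k^\dagger z_k\) at every frequency, including conjugate pairs. This holds by the slicewise definition of the pseudoinverse and the conjugate symmetry of real tensors. Conjugate symmetry also guarantees that the resulting \(\mathcal W^{(k)}\) is real.
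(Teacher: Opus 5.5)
Your proposal is correct and follows essentially the same route as the paper. You fix a frequency and induct with $W_{k-1}=VP_{k-1}$, note that $z_k=a_kP_{k-1}$ so that $\ker A_k=\operatorname{range}(P_{k-1})\cap\ker z_k$, and identify $P_{k-1}(I_r-z_k^\dagger z_k)=P_{k-1}-z_k^\dagger z_k$ as the orthogonal projector onto $\ker A_k$, treating $z_k=0$ separately. Your extra checks on the Fourier slices of $\mathcal E_{J_k}$ and $\mathcal Z_k^\dagger*\mathcal Z_k$ only make explicit what the paper uses implicitly.
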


\begin{proof}
Fix a frequency \(\ell\) and write \(V=\widehat V^{(\ell)}\). For \(k=0\), the set \(J_0\) is empty, \(A_0\) has no rows, and \(I-A_0^\dagger A_0=I\), so the claimed identity reduces to \(W^{(0)}=V\).

Assume now that \(W^{(k-1)}=VP_{k-1}\), where
\[
A_{k-1}=E_{J_{k-1}}^HV,
\qquad
P_{k-1}=I-A_{k-1}^\dagger A_{k-1}.
\]
The matrix \(P_{k-1}\) is the orthogonal projector onto \(\ker(A_{k-1})\). Let
\[
b=e_{j_k}^HV,
\qquad
z=bP_{k-1}=e_{j_k}^HW^{(k-1)}.
\]
Since \(A_k=\begin{bmatrix}A_{k-1}\\ b\end{bmatrix}\), a vector \(x\) belongs to \(\ker(A_k)\) if and only if \(x\in\ker(A_{k-1})=\operatorname{range}(P_{k-1})\) and \(bx=0\). For \(x=P_{k-1}x\), the latter condition is equivalent to \(zx=0\). Therefore
\[
\ker(A_k)=\operatorname{range}(P_{k-1})\cap\ker(z).
\]

If \(z\neq0\), then \(zP_{k-1}=z\), and the orthogonal projector onto this intersection is
\[
P_k
=
P_{k-1}-\frac{z^Hz}{\|z\|_2^2}.
\]
Indeed, the rank-one projector \(z^Hz/\|z\|_2^2\) has range contained in \(\operatorname{range}(P_{k-1})\), and subtracting it removes precisely the direction orthogonal to \(\ker(z)\) inside that subspace. If \(z=0\), the new row adds no constraint and \(P_k=P_{k-1}\). In both cases, \(P_k\) is the orthogonal projector onto \(\ker(A_k)\), hence
\[
P_k=I-A_k^\dagger A_k.
\]

Finally, when \(z\neq0\), the slicewise update of Algorithm~\ref{alg:tarp_prototype} gives

\[
\begin{aligned}
W^{(k)}
&=W^{(k-1)}\left(I-\frac{z^Hz}{\|z\|_2^2}\right)\\
&=VP_{k-1}\left(I-\frac{z^Hz}{\|z\|_2^2}\right)\\
&=V\left(P_{k-1}-\frac{z^Hz}{\|z\|_2^2}\right)
=VP_k.
\end{aligned}
\]

The case \(z=0\) gives the same conclusion with \(P_k=P_{k-1}\). Thus \(W^{(k)}=V(I-A_k^\dagger A_k)\) at every frequency. Applying the inverse FFT proves the tensor identity.
\end{proof}

\begin{corollary}[Selected slices are annihilated]
\label{cor:selected-slices-zero}
For every \(k=0,1,\ldots,r\),
\[
\mathcal E_{J_k}^\top * \mathcal W^{(k)}
=
0.
\]
Equivalently,
\[
\mathcal W^{(k)}(j,:,:)
=
0
\qquad
\text{for every } j\in J_k .
\]
\end{corollary}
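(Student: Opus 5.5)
The plan is to work frequency by frequency, using the compact form of Lemma~\ref{lem:tarp-compact}, and then return to the tensor domain with the inverse FFT. Fix $k$ and a frequency $\ell$, and write $V=\widehat V^{(\ell)}$ and $A_k=E_{J_k}^H V$. By Lemma~\ref{lem:tarp-compact},
\[
E_{J_k}^H\widehat W^{(k,\ell)}
=A_k\bigl(I_r-A_k^\dagger A_k\bigr)
=A_k-A_kA_k^\dagger A_k .
\]
The first Moore--Penrose identity $A_kA_k^\dagger A_k=A_k$ holds for the matrix pseudoinverse, so this difference is zero at every frequency. This works whether or not $A_k$ has full row rank, so no rank assumption on the sampled rows is needed.

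The Fourier slices of $\mathcal E_{J_k}^\top*\mathcal W^{(k)}$ are exactly $E_{J_k}^H\widehat W^{(k,\ell)}$. This holds because $\mathcal E_{J_k}=\mathcal I_n(:,J_k,:)$ has first frontal slice $I_n(:,J_k)$ and all other frontal slices zero. Its FFT is therefore the constant real matrix $E_{J_k}$ at every frequency, and the transpose becomes $E_{J_k}^H=E_{J_k}^\top$ slicewise. All Fourier slices of $\mathcal E_{J_k}^\top*\mathcal W^{(k)}$ vanish, so the tensor itself is zero after the inverse FFT. The case $k=0$ is vacuous, since $J_0$ is empty.

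For the equivalent statement, note that $\mathcal E_{J_k}^\top*\mathcal Y$ extracts the horizontal slices $\mathcal Y(J_k,:,:)$ for any tensor $\mathcal Y$. In the Fourier domain, $E_{J_k}^H\widehat Y^{(\ell)}$ selects rows $J_k$ of each slice, and the FFT along mode three commutes with row selection. So the identity says exactly that $\mathcal W^{(k)}(j,:,:)=0$ for every $j\in J_k$.

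The only point that needs care is checking that the tensor-level identity in Lemma~\ref{lem:tarp-compact} really uses the slicewise matrix pseudoinverse, so that the matrix Moore--Penrose identity can be applied frequency by frequency. This follows from the definition of the t-pseudoinverse (Algorithm~\ref{alg:tpseudoinverse_half}) together with the slicewise form already given in the lemma. I do not expect any real obstacle beyond this bookkeeping.
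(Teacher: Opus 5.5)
Your proposal is correct and follows the paper's argument. The paper uses Lemma~\ref{lem:tarp-compact} to write $\mathcal E_{J_k}^\top*\mathcal W^{(k)}=\mathcal A_k*(\mathcal I_r-\mathcal A_k^\dagger*\mathcal A_k)$ and then applies the Moore--Penrose identity $\mathcal A_k*\mathcal A_k^\dagger*\mathcal A_k=\mathcal A_k$ at the tensor level, whereas you apply the matrix version of the same identity frequency by frequency, which is only a difference in bookkeeping.
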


\begin{proof}
Using the notation of Lemma~\ref{lem:tarp-compact}, we have
\[
\mathcal E_{J_k}^\top * \mathcal W^{(k)}
=
\mathcal A_k
*
\left(
\mathcal I_r
-
\mathcal A_k^\dagger
*
\mathcal A_k
\right).
\]
For any tensor \(\mathcal A_k\), the Moore--Penrose identity gives
\[
\mathcal A_k
*
\mathcal A_k^\dagger
*
\mathcal A_k
=
\mathcal A_k.
\]
Therefore,
\[
\mathcal A_k
*
\left(
\mathcal I_r
-
\mathcal A_k^\dagger
*
\mathcal A_k
\right)
=
0.
\]
This proves the claim.
\end{proof}

\begin{remark}[Normalization of the sampling probabilities]
The identity
\[
\left\|
\mathcal W^{(k)}
\right\|_F^2
=
r-k
\]
does not hold automatically for the common-index tensor algorithm. In the Fourier domain,
\[
\left\|
\mathcal W^{(k)}
\right\|_F^2
=
r
-
\frac{1}{p}
\sum_{\ell=1}^{p}
\operatorname{rank}
\left(
E_{J_k}^H
\widehat V^{(\ell)}
\right).
\]
Hence \(\left\|\mathcal W^{(k)}\right\|_F^2=r-k\) only under the additional condition that
\[
\operatorname{rank}
\left(
E_{J_k}^H
\widehat V^{(\ell)}
\right)
=
k
\qquad
\text{for every } \ell=1,2,\ldots,p.
\]
For this reason, the probabilities in the common-index T-ARP algorithm should be defined by explicit normalization:
\[
p_j^{(k)}
=
\frac{
\left\|
\mathcal W^{(k-1)}(j,:,:)
\right\|_F^2
}{
\left\|
\mathcal W^{(k-1)}
\right\|_F^2
},
\qquad
j=1,2,\ldots,n.
\]
Therefore Algorithm~\ref{alg:tarp_prototype} uses the explicitly normalized probabilities above. The denominator \(r-k+1\) is valid only when the frequency-wise rank condition holds.
\end{remark}

\begin{remark}[Computational interpretation]
Lemma~\ref{lem:tarp-compact} shows that \(\mathcal W^{(k)}\) is the part of \(\mathcal V\) that remains orthogonal to the selected tensor rows. In practice, the same residual subspaces can be maintained frequency by frequency using stable Householder or QR updates. We do not use an exact flop count here because it depends on how conjugate frequencies and the compact factors are stored.
\end{remark}

The indices \(J=(j_1,j_2,\ldots,j_r)\) selected by T-ARP can also be used to define an oblique interpolation operator. Let
\[
\mathcal E_J
=
\mathcal I_n(:,J,:)
\in
\mathbb R^{n\times r\times p},
\]
and assume that
\[
\mathcal V^\top * \mathcal E_J
\in
\mathbb R^{r\times r\times p}
\]
is invertible in the t-product sense. We define
\[
\widetilde{\Pi}_J
=
\mathcal I_n
-
\mathcal E_J
*
\left(
\mathcal V^\top
*
\mathcal E_J
\right)^{-1}
*
\mathcal V^\top .
\]
Then
\[
\mathcal X
*
\widetilde{\Pi}_J
=
\mathcal X
-
\mathcal X(:,J,:)
*
\left(
\mathcal V^\top
*
\mathcal E_J
\right)^{-1}
*
\mathcal V^\top .
\]
Since
\[
\mathcal V(J,:,:)^\top
=
\mathcal V^\top
*
\mathcal E_J,
\]
we use the notation \(\mathcal A^{-T}:=(\mathcal A^\top)^{-1}\) for an invertible square tensor \(\mathcal A\). Thus, this can also be written as
\[
\mathcal X
*
\widetilde{\Pi}_J
=
\mathcal X
-
\mathcal X(:,J,:)
*
\mathcal V(J,:,:)^{-T}
*
\mathcal V^\top .
\]

\begin{lemma}[Algebraic properties of the tensor oblique projector]
\label{lem:tensor-oblique}
Let \(\mathcal V\in \mathbb R^{n\times r\times p}\) satisfy
\[
\mathcal V^\top * \mathcal V
=
\mathcal I_r,
\]
and let \(J\) be an index set of cardinality \(r\) such that
\[
\mathcal V^\top * \mathcal E_J
\]
is invertible in the t-product sense. Then the tensor
\[
\widetilde{\Pi}_J
=
\mathcal I_n
-
\mathcal E_J
*
\left(
\mathcal V^\top
*
\mathcal E_J
\right)^{-1}
*
\mathcal V^\top
\]
satisfies the following properties:
\begin{enumerate}
    \item \(\widetilde{\Pi}_J\) is a projector:
    \[
    \widetilde{\Pi}_J * \widetilde{\Pi}_J
    =
    \widetilde{\Pi}_J .
    \]

    \item The cancellation is on the left:
    \[
    \mathcal V^\top * \widetilde{\Pi}_J
    =
    0.
    \]

    \item The selected coordinates are interpolated:
    \[
    \widetilde{\Pi}_J * \mathcal E_J
    =
    0.
    \]
    Consequently,
    \[
    \left(
    \mathcal X
    *
    \widetilde{\Pi}_J
    \right)(:,J,:)
    =
    0.
    \]

    \item We have
    \[
    \left(
    \mathcal I_n
    -
    \mathcal V
    *
    \mathcal V^\top
    \right)
    *
    \widetilde{\Pi}_J
    =
    \widetilde{\Pi}_J .
    \]
\end{enumerate}
\end{lemma}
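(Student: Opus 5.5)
The plan is to argue purely algebraically with the t-product, using only associativity, the identities \(\mathcal I_n*\mathcal Y=\mathcal Y\), \(\mathcal Y*\mathcal I_r=\mathcal Y\), and \(\mathcal V^\top*\mathcal V=\mathcal I_r\). The one additional fact needed is \(\mathcal E_J^\top*\mathcal E_J=\mathcal I_r\). This holds because \(\mathcal E_J=\mathcal I_n(:,J,:)\) has only a first frontal slice, equal to the matrix \(E_J\), so its t-transpose has first slice \(E_J^\top\) and the product is \(E_J^\top E_J=I_r\) in the first slice. Write \(\mathcal M=\mathcal V^\top*\mathcal E_J\) and \(\mathcal P=\mathcal E_J*\mathcal M^{-1}*\mathcal V^\top\), so that \(\widetilde\Pi_J=\mathcal I_n-\mathcal P\).

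I would prove the items in the order 1, 2, 3, 4.

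\emph{Idempotency.} First observe that
\[
\mathcal P*\mathcal P=\mathcal E_J*\mathcal M^{-1}*(\mathcal V^\top*\mathcal E_J)*\mathcal M^{-1}*\mathcal V^\top=\mathcal E_J*\mathcal M^{-1}*\mathcal V^\top=\mathcal P,
\]
using \(\mathcal M*\mathcal M^{-1}=\mathcal I_r\). Then
\[
(\mathcal I_n-\mathcal P)*(\mathcal I_n-\mathcal P)=\mathcal I_n-2\mathcal P+\mathcal P*\mathcal P=\mathcal I_n-\mathcal P.
\]

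\emph{Item 2.} We have \(\mathcal V^\top*\mathcal P=\mathcal M*\mathcal M^{-1}*\mathcal V^\top=\mathcal V^\top\). Hence \(\mathcal V^\top*\widetilde\Pi_J=\mathcal V^\top-\mathcal V^\top=0\).

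\emph{Item 3.} We have \(\mathcal P*\mathcal E_J=\mathcal E_J*\mathcal M^{-1}*\mathcal M=\mathcal E_J\). Hence \(\widetilde\Pi_J*\mathcal E_J=0\). The consequence follows because \((\mathcal X*\widetilde\Pi_J)(:,J,:)=\mathcal X*\widetilde\Pi_J*\mathcal E_J\). To justify the extraction identity \(\mathcal Y(:,J,:)=\mathcal Y*\mathcal E_J\), note that \(\mathcal E_J\) is a tube-scalar-free selection: in the Fourier domain every slice of \(\mathcal E_J\) equals \(E_J\), so each Fourier slice of \(\mathcal Y*\mathcal E_J\) is \(\widehat Y^{(\ell)}E_J\). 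The inverse FFT then commutes with column selection.

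\emph{Item 4.} Expand
\[
(\mathcal I_n-\mathcal V*\mathcal V^\top)*\widetilde\Pi_J=\widetilde\Pi_J-\mathcal V*(\mathcal V^\top*\widetilde\Pi_J)
\]
and apply item 2 to conclude.

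The only point needing care is the interplay of the t-transpose with \(\mathcal E_J\): specifically, that \(\mathcal E_J^\top*\mathcal Y=\mathcal Y(J,:,:)\) and \(\mathcal Y*\mathcal E_J=\mathcal Y(:,J,:)\). I would settle this once via the Fourier characterization, where all slices of \(\mathcal E_J\) equal the real matrix \(E_J\). Everything else is routine associativity. Alternatively, the whole lemma can be verified slicewise in the Fourier domain as the standard matrix identities for \(I-E_J(V^HE_J)^{-1}V^H\), with invertibility of \(\mathcal M\) meaning that every \(\widehat V^{(\ell)H}E_J\) is nonsingular.
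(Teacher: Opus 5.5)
Your proof is correct and uses the same cancellations as the paper: expanding the square, and using $\mathcal M*\mathcal M^{-1}=\mathcal I_r$ and $\mathcal M^{-1}*\mathcal M=\mathcal I_r$. The only difference is that you work directly in the t-product algebra, while the paper checks the matrix identities for $I_n-E_J(V^HE_J)^{-1}V^H$ frequency by frequency and then applies the inverse FFT; deriving item 4 from item 2 and justifying $\mathcal Y(:,J,:)=\mathcal Y*\mathcal E_J$ are small, tidy additions. Note that neither $\mathcal E_J^\top*\mathcal E_J=\mathcal I_r$ nor $\mathcal V^\top*\mathcal V=\mathcal I_r$ is used anywhere in your argument (or in the paper's), so the ``one additional fact needed'' is superfluous.
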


\begin{proof}
All identities can be verified in the Fourier domain. Fix a frequency \(\ell\), and write
\[
V=\widehat V^{(\ell)},
\qquad
E=E_J,
\qquad
\widetilde{\Pi}
=
I_n
-
E
\left(
V^H E
\right)^{-1}
V^H .
\]

First,
\[
\widetilde{\Pi}^2
=
\left(
I_n
-
E
\left(
V^H E
\right)^{-1}
V^H
\right)^2 .
\]
Expanding gives
\[
\widetilde{\Pi}^2
=
I_n
-
2E
\left(
V^H E
\right)^{-1}
V^H
+
E
\left(
V^H E
\right)^{-1}
V^H
E
\left(
V^H E
\right)^{-1}
V^H .
\]
Since
\[
V^H E
\left(
V^H E
\right)^{-1}
=
I_r,
\]
the last term is equal to
\[
E
\left(
V^H E
\right)^{-1}
V^H .
\]
Hence
\[
\widetilde{\Pi}^2
=
\widetilde{\Pi}.
\]

Second,
\[
V^H\widetilde{\Pi}
=
V^H
-
V^H E
\left(
V^H E
\right)^{-1}
V^H
=
V^H
-
V^H
=
0.
\]

Third,
\[
\widetilde{\Pi}E
=
E
-
E
\left(
V^H E
\right)^{-1}
V^H E
=
E
-
E
=
0.
\]
Therefore,
\[
\left(
\mathcal X
*
\widetilde{\Pi}_J
\right)(:,J,:)
=
\mathcal X
*
\widetilde{\Pi}_J
*
\mathcal E_J
=
0.
\]

Finally,
\[
\left(
I_n
-
VV^H
\right)
\widetilde{\Pi}
=
I_n
-
VV^H
-
E
\left(
V^H E
\right)^{-1}
V^H
+
VV^H E
\left(
V^H E
\right)^{-1}
V^H .
\]
Using again
\[
V^H E
\left(
V^H E
\right)^{-1}
=
I_r,
\]
the last term becomes \(VV^H\), which cancels the term \(-VV^H\). Thus
\[
\left(
I_n
-
VV^H
\right)
\widetilde{\Pi}
=
I_n
-
E
\left(
V^H E
\right)^{-1}
V^H
=
\widetilde{\Pi}.
\]
Since the same identities hold for every frequency \(\ell\), applying the inverse FFT gives the tensor identities. 
\end{proof}

\color{black}

\begin{lemma}[Elementary oblique updates and final factorization]
\label{lem:elementary-oblique}
Fix selected indices \(J=(j_1,j_2,\ldots,j_r)\). At frequency \(\ell\), set \(z_k^{(\ell)}=e_{j_k}^H\widehat W^{(k-1,\ell)}\), and define
\[
\widehat{\widetilde\Pi}_k^{(\ell)}=
I_n-e_{j_k}\frac{z_k^{(\ell)}(\widehat W^{(k-1,\ell)})^H}{\|z_k^{(\ell)}\|_2^2}
\]
when \(z_k^{(\ell)}\neq0\), and \(\widehat{\widetilde\Pi}_k^{(\ell)}=I_n\) otherwise. Then
\[
\widehat W^{(k,\ell)}=(\widehat{\widetilde\Pi}_k^{(\ell)})^H\widehat W^{(k-1,\ell)}.
\]
If \(E_J^H\widehat V^{(\ell)}\) is nonsingular, then
\[
\widehat{\widetilde\Pi}_1^{(\ell)}\cdots\widehat{\widetilde\Pi}_r^{(\ell)}
=I_n-E_J\bigl((\widehat V^{(\ell)})^HE_J\bigr)^{-1}(\widehat V^{(\ell)})^H.
\]
Moreover, if \(\widehat R^{(0,\ell)}=\widehat X^{(\ell)}[I-\widehat V^{(\ell)}(\widehat V^{(\ell)})^H]\) and \(\widehat R^{(k,\ell)}=\widehat R^{(k-1,\ell)}\widehat{\widetilde\Pi}_k^{(\ell)}\), then
\[
\widehat R^{(k,\ell)}\widehat W^{(k,\ell)}=0
\]
and, whenever \(z_k^{(\ell)}\neq0\),
\[
\|\widehat R^{(k,\ell)}\|_F^2
=\|\widehat R^{(k-1,\ell)}\|_F^2
+\frac{\|\widehat R^{(k-1,\ell)}(:,j_k)\|_2^2}{\|z_k^{(\ell)}\|_2^2}.
\]
\end{lemma}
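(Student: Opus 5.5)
We fix a frequency \(\ell\) and drop the superscript, writing \(V=\widehat V^{(\ell)}\), \(W_k=\widehat W^{(k,\ell)}\), \(z_k=e_{j_k}^HW_{k-1}\), \(\widetilde\Pi_k=\widehat{\widetilde\Pi}_k^{(\ell)}\) and \(R_k=\widehat R^{(k,\ell)}\). By Lemma~\ref{lem:tarp-compact} we have \(W_k=VP_k\), where \(P_k=I-A_k^\dagger A_k\) is the orthogonal projector onto \(\ker(A_k)\). The proof of that lemma also gives \(P_k=P_{k-1}-z_k^Hz_k/\|z_k\|_2^2\) when \(z_k\neq0\). From this we record three facts:
\[
W_{k-1}^HW_{k-1}=P_{k-1},\qquad W_{k-1}^HW_k=P_{k-1}P_k=P_k,\qquad z_kP_k=0.
\]
The last one holds because \(\operatorname{range}(P_k)\subseteq\ker(z_k)\). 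In addition, Corollary~\ref{cor:selected-slices-zero} gives \(e_{j_i}^HW_{k-1}=0\) for all \(i<k\). All claims are trivial when \(z_k=0\), since then \(\widetilde\Pi_k=I\) and \(P_k=P_{k-1}\). So we assume \(z_k\neq0\) throughout.

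\emph{Update identity.} We have \(\widetilde\Pi_k^H=I-W_{k-1}z_k^He_{j_k}^H/\|z_k\|_2^2\). Applying it to \(W_{k-1}\) and using \(e_{j_k}^HW_{k-1}=z_k\) gives
\[
\widetilde\Pi_k^HW_{k-1}=W_{k-1}\Bigl(I-\frac{z_k^Hz_k}{\|z_k\|_2^2}\Bigr).
\]
This is exactly the slicewise update of Algorithm~\ref{alg:tarp_prototype}, i.e.\ it equals \(W_k\).

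\emph{Product formula.} We expect this step to be the main obstacle, because the factors are not mutually commuting projectors. We plan to argue structurally rather than by direct multiplication, in three steps.

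First, each factor has the form \(\widetilde\Pi_k=I-e_{j_k}c_k^HV^H\), with \(c_k^H=z_kP_{k-1}/\|z_k\|_2^2\), because \(W_{k-1}^H=P_{k-1}V^H\). Expanding the product \(T=\widetilde\Pi_1\cdots\widetilde\Pi_r\), every non-identity term starts with some \(e_{j_i}\) on the left and ends with a row vector of the form \(c^HV^H\) on the right. Hence \(T=I-E_JMV^H\) for some \(r\times r\) matrix \(M\).

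Second, we show \(TE_J=0\). For \(i<k\) we have \(W_{k-1}^He_{j_i}=0\), so \(\widetilde\Pi_ke_{j_i}=e_{j_i}\). On the other hand, \(\widetilde\Pi_ie_{j_i}=e_{j_i}-e_{j_i}z_iz_i^H/\|z_i\|_2^2=0\). Therefore \(Te_{j_i}=\widetilde\Pi_1\cdots\widetilde\Pi_ie_{j_i}=0\).

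Third, substituting into \(T=I-E_JMV^H\) gives \(E_J=E_JMV^HE_J\). Since \(E_J\) has full column rank, this yields \(MV^HE_J=I\). Because \(V^HE_J\) is nonsingular, \(M=(V^HE_J)^{-1}\), which is the claimed formula.

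\emph{Annihilation.} We argue by induction. The base case is \(R_0W_0=X(I-VV^H)V=0\). For the inductive step,
\[
\widetilde\Pi_kW_k=W_k-e_{j_k}z_kW_{k-1}^HW_k/\|z_k\|_2^2=W_k-e_{j_k}z_kP_k/\|z_k\|_2^2=W_k.
\]
Hence
\[
R_kW_k=R_{k-1}W_k=R_{k-1}W_{k-1}\bigl(I-z_k^Hz_k/\|z_k\|_2^2\bigr)=0.
\]

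\emph{Norm identity.} Write \(R_k=R_{k-1}-R_{k-1}(:,j_k)\,z_kW_{k-1}^H/\|z_k\|_2^2\). In the cross term, the trace contains the factor \(R_{k-1}W_{k-1}\), which vanishes by the previous step. The squared norm of the correction is
\[
\frac{\|R_{k-1}(:,j_k)\|_2^2\,\|z_kW_{k-1}^H\|_2^2}{\|z_k\|_2^4}.
\]
Since \(\|z_kW_{k-1}^H\|_2^2=z_kP_{k-1}z_k^H=\|z_k\|_2^2\), this gives the stated recursion.
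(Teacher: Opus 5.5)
Your proofs of the update identity, the invariant \(R_kW_k=0\), and the norm recursion coincide with the paper's. The product formula is where you take a genuinely different route.

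The paper records only the one-sided structure \(T=I-E_JB\) and pins down \(B\) by left cancellation. Iterating the update identity gives \(T^HV=W_r\). Nonsingularity of \(E_J^HV\) gives \(P_r=0\), hence \(W_r=0\) and \(V^HT=0\), so \(B=(V^HE_J)^{-1}V^H\). The paper also derives \(TE_J=0\), but its conclusion does not rest on it.

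You instead use \(W_{k-1}^H=P_{k-1}V^H\) to get the sharper two-sided form \(T=I-E_JMV^H\), and you identify \(M\) from the interpolation property \(TE_J=0\). This makes the oblique-projector structure more transparent. However, it relies on two facts you have not justified:
\begin{enumerate}
\item \(E_J\) has full column rank.
\item \(z_i\neq0\) for every \(i\).
\end{enumerate}
The second one matters. If \(z_i=0\), then \(\widetilde\Pi_i=I_n\) and \(\widetilde\Pi_ie_{j_i}=e_{j_i}\neq0\). Your blanket reduction ``all claims are trivial when \(z_k=0\)'' does not cover the product formula.

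Both facts follow from the hypothesis, so the fix is one sentence. Nonsingularity of \(E_J^HV\) forces the indices in \(J\) to be distinct and the rows \(b_i=e_{j_i}^HV\) to be linearly independent. Hence \(b_i^H\notin\operatorname{range}(A_{i-1}^H)=\ker(A_{i-1})^{\perp}\). Since \(P_{i-1}\) is the orthogonal projector onto \(\ker(A_{i-1})\), this gives \(z_i=b_iP_{i-1}\neq0\).

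Alternatively, keep your two-sided form but identify \(M\) as the paper does. From \(V^HT=0\) you get \((I_r-V^HE_JM)V^H=0\), and multiplying on the right by \(V\) yields \(M=(V^HE_J)^{-1}\). The advantage of the paper's left-cancellation route is that it needs no step-by-step nondegeneracy at all.
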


\begin{proof}
Fix the frequency \(\ell\) and omit the superscript \((\ell)\) in this proof. Write \(W_{k-1}=\widehat W^{(k-1,\ell)}\), \(z_k=e_{j_k}^HW_{k-1}\), and
\[
\widetilde\Pi_k
=
I-e_{j_k}\frac{z_kW_{k-1}^H}{\|z_k\|_2^2}.
\]
Since \(W_{k-1}=VP_{k-1}\) with \(P_{k-1}\) an orthogonal projector,
\[
W_{k-1}^HW_{k-1}
=
P_{k-1}V^HVP_{k-1}
=
P_{k-1}.
\]
Moreover, \(z_kP_{k-1}=z_k\). Hence

\[
\begin{aligned}
\widetilde\Pi_k^H W_{k-1}
&=
W_{k-1}
-
W_{k-1}\frac{z_k^He_{j_k}^HW_{k-1}}{\|z_k\|_2^2}\\
&=
W_{k-1}
-
W_{k-1}\frac{z_k^Hz_k}{\|z_k\|_2^2}
=
W_k,
\end{aligned}
\]

which proves the first identity.

We next prove the invariant \(R_kW_k=0\), where \(R_k=\widehat R^{(k,\ell)}\). At \(k=0\),
\[
R_0W_0
=
\widehat X^{(\ell)}(I-VV^H)V
=
0.
\]
Assume \(R_{k-1}W_{k-1}=0\). Since \(W_k=W_{k-1}P_k\), we have
\[
W_{k-1}^HW_k
=
P_{k-1}P_k
=
P_k.
\]
Moreover, the update removes the direction \(z_k^H\), so \(z_kP_k=0\). Therefore
\[
\widetilde\Pi_kW_k
=
W_k-e_{j_k}\frac{z_kW_{k-1}^HW_k}{\|z_k\|_2^2}
=
W_k-e_{j_k}\frac{z_kP_k}{\|z_k\|_2^2}
=
W_k.
\]
Consequently,
\[
R_kW_k
=
R_{k-1}\widetilde\Pi_kW_k
=
R_{k-1}W_k
=
0.
\]

The residual increment is
\[
\Delta_k
:=
R_k-R_{k-1}
=
-R_{k-1}(:,j_k)\frac{z_kW_{k-1}^H}{\|z_k\|_2^2}.
\]
Using the Frobenius inner product and the invariant,

\[
\begin{aligned}
\langle R_{k-1},\Delta_k\rangle_F
&=
-\frac{1}{\|z_k\|_2^2}
\operatorname{tr}\!\left(
R_{k-1}^H R_{k-1}(:,j_k)z_kW_{k-1}^H
\right)\\
&=
-\frac{1}{\|z_k\|_2^2}
z_kW_{k-1}^H R_{k-1}^H R_{k-1}(:,j_k)
=0,
\end{aligned}
\]

because \(R_{k-1}W_{k-1}=0\) implies \(W_{k-1}^HR_{k-1}^H=0\). Furthermore,
\[
\|z_kW_{k-1}^H\|_2^2
=
z_kW_{k-1}^HW_{k-1}z_k^H
=
z_kP_{k-1}z_k^H
=
\|z_k\|_2^2.
\]
Therefore
\[
\|\Delta_k\|_F^2
=
\frac{\|R_{k-1}(:,j_k)\|_2^2}{\|z_k\|_2^2}.
\]
Since \(R_{k-1}\) and \(\Delta_k\) are Frobenius-orthogonal, the Pythagorean identity yields the stated one-step norm formula.

It remains to prove the final factorization. Let \(P=\widetilde\Pi_1\cdots\widetilde\Pi_r\). Each factor has the form \(I-e_{j_k}c_k^H\), so every column of \(I-P\) lies in \(\operatorname{span}(E_J)\). Hence \(P=I-E_JB\) for some \(B\). For \(s<k\), Corollary~\ref{cor:selected-slices-zero} gives \(e_{j_s}^HW_{k-1}=0\), and therefore \(\widetilde\Pi_ke_{j_s}=e_{j_s}\). On the other hand,
\[
\widetilde\Pi_ke_{j_k}
=
e_{j_k}\left(
1-\frac{z_kW_{k-1}^He_{j_k}}{\|z_k\|_2^2}
\right)
=
0,
\]
because \(W_{k-1}^He_{j_k}=z_k^H\). Thus, after multiplication by all factors, \(PE_J=0\). Moreover, repeated use of the first identity gives \(P^HV=W_r\). Final nonsingularity of \(E_J^HV\) implies \(W_r=0\), and hence \(V^HP=0\). Substituting \(P=I-E_JB\) gives
\[
V^H-V^HE_JB=0,
\qquad
B=(V^HE_J)^{-1}V^H.
\]
Therefore
\[
P
=
I-E_J(V^HE_J)^{-1}V^H,
\]
which is the claimed final oblique projector.
\end{proof}

The previous lemmas give the algebraic identities needed to describe the oblique interpolation residual. In particular, for a fixed index set \(J\), the tensor
\[
\widetilde{\Pi}_J
=
\mathcal I_n
-
\mathcal E_J
*
\left(
\mathcal V^\top
*
\mathcal E_J
\right)^{-1}
*
\mathcal V^\top
\]
satisfies
\[
\mathcal X * \widetilde{\Pi}_J
=
\mathcal X
-
\mathcal X(:,J,:)
*
\mathcal V(J,:,:)^{-T}
*
\mathcal V^\top.
\]
The remaining question is probabilistic: we need to understand the expected size of this residual when \(J\) is generated by the common-index T-ARP sampling rule.

\paragraph{Frequency alignment and rank growth}

Let \(J_{k-1}\) be a history with positive probability. For each frequency, define
\[
a_{j,k}^{(\ell)}=\|\widehat W^{(k-1,\ell)}(j,:)\|_2^2,
\qquad
q_{j,k}=\frac{\sum_{\nu=1}^p a_{j,k}^{(\nu)}}{\sum_{i=1}^n\sum_{\nu=1}^p a_{i,k}^{(\nu)}}.
\]
The probability \(q_{j,k}\) is used by common-index T-ARP, while \(p_{j,k}^{(\ell)}\) is the probability that matrix ARP would use at the single frequency \(\ell\). The frequency-wise ARP probability is
\[
p_{j,k}^{(\ell)}=\frac{a_{j,k}^{(\ell)}}{\sum_{i=1}^n a_{i,k}^{(\ell)}}.
\]

\begin{assumption}[Frequency alignment and rank growth]
\label{ass:tarp-alignment}
There are constants \(\eta_k\geq1\) such that, for every history \(J_{k-1}\) with positive probability, every frequency \(\ell\), and every index with \(a_{j,k}^{(\ell)}>0\),
\[
q_{j,k}\leq \eta_k p_{j,k}^{(\ell)}.
\]
Moreover, for every history \(J_k\) with positive probability and every frequency \(\ell\),
\[
\operatorname{rank}(E_{J_k}^H\widehat V^{(\ell)})=k,
\qquad k=0,1,\ldots,r.
\]
\end{assumption}

The rank-growth condition implies \(\sum_i a_{i,k}^{(\ell)}=r-k+1\) and, at \(k=r\), the nonsingularity of \(E_J^H\widehat V^{(\ell)}\). More explicitly, if \(q_{j,k}>0\), then the extended history \((J_{k-1},j)\) has positive probability. Applying rank growth to this history shows that \(a_{j,k}^{(\ell)}>0\) at every frequency. We call the distributions perfectly aligned at step \(k\) when \(q_{j,k}=p_{j,k}^{(\ell)}\) for all \(j,\ell\); this corresponds to \(\eta_k=1\).

\begin{theorem}[Expected error for common-index T-ARP under frequency alignment]
\label{thm:tarp-error}
Let \(\mathcal X\in\mathbb R^{m\times n\times p}\) and \(\mathcal V\in\mathbb R^{n\times r\times p}\) satisfy \(\mathcal V^\top*\mathcal V=\mathcal I_r\). Let \(J\) be returned by Algorithm~\ref{alg:tarp_prototype}, and suppose Assumption~\ref{ass:tarp-alignment} holds. Then
\[
\mathbb E\left\|\mathcal X-\mathcal X(:,J,:)*\mathcal V(J,:,:)^{-T}*\mathcal V^\top\right\|_F^2
\leq
\Gamma_\eta\left\|\mathcal X-\mathcal X*\mathcal V*\mathcal V^\top\right\|_F^2,
\]
where
\[
\Gamma_\eta=\prod_{k=1}^r\left(1+\frac{\eta_k}{r-k+1}\right).
\]
In the perfectly aligned case, \(\Gamma_\eta=r+1\).
\end{theorem}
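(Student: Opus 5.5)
We prove the bound frequency by frequency, using the residual recursion of Lemma~\ref{lem:elementary-oblique}, and take expectations step by step with a potential function. By Parseval, the left-hand side equals \(\frac1p\sum_{\ell}\mathbb E\|\widehat R^{(r,\ell)}\|_F^2\). Here \(\widehat R^{(k,\ell)}\) is defined in Lemma~\ref{lem:elementary-oblique} and starts from \(\widehat R^{(0,\ell)}=\widehat X^{(\ell)}(I-\widehat V^{(\ell)}(\widehat V^{(\ell)})^H)\). Rank growth makes \(E_J^H\widehat V^{(\ell)}\) nonsingular. By the final factorization in Lemma~\ref{lem:elementary-oblique} and item 4 of Lemma~\ref{lem:tensor-oblique}, we get \(\widehat R^{(r,\ell)}=\widehat X^{(\ell)}\widehat{\widetilde\Pi}^{(\ell)}_J\). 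This is exactly the Fourier slice of \(\mathcal X*\widetilde\Pi_J=\mathcal X-\mathcal X(:,J,:)*\mathcal V(J,:,:)^{-T}*\mathcal V^\top\). Likewise, \(\frac1p\sum_\ell\|\widehat R^{(0,\ell)}\|_F^2=\|\mathcal X-\mathcal X*\mathcal V*\mathcal V^\top\|_F^2\). It therefore suffices to show
\[
\mathbb E\Bigl[\textstyle\sum_\ell\|\widehat R^{(k,\ell)}\|_F^2\,\Big|\,J_{k-1}\Bigr]\le\Bigl(1+\frac{\eta_k}{r-k+1}\Bigr)\sum_\ell\|\widehat R^{(k-1,\ell)}\|_F^2 .
\]
Iterating this with the tower property gives \(\Gamma_\eta\).

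For the one-step bound, fix the history \(J_{k-1}\) and a frequency \(\ell\). Rank growth applied to \((J_{k-1},j)\) shows that every index with \(q_{j,k}>0\) has \(z^{(\ell)}=e_j^H\widehat W^{(k-1,\ell)}\neq0\), with \(\|z^{(\ell)}\|_2^2=a^{(\ell)}_{j,k}\). The norm formula of Lemma~\ref{lem:elementary-oblique} then gives
\[
\mathbb E\bigl[\|\widehat R^{(k,\ell)}\|_F^2\mid J_{k-1}\bigr]=\|\widehat R^{(k-1,\ell)}\|_F^2+\sum_{j:\,q_{j,k}>0}q_{j,k}\frac{\|\widehat R^{(k-1,\ell)}(:,j)\|_2^2}{a^{(\ell)}_{j,k}} .
\]
The alignment inequality gives \(q_{j,k}\le\eta_kp^{(\ell)}_{j,k}=\eta_k a^{(\ell)}_{j,k}/\sum_i a^{(\ell)}_{i,k}\). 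The denominator \(\sum_i a^{(\ell)}_{i,k}=\|\widehat W^{(k-1,\ell)}\|_F^2=\operatorname{tr}P_{k-1}=r-(k-1)\), by rank growth and Lemma~\ref{lem:tarp-compact}. Each summand is therefore at most \(\eta_k\|\widehat R^{(k-1,\ell)}(:,j)\|_2^2/(r-k+1)\). Summing over \(j\) yields at most \(\eta_k\|\widehat R^{(k-1,\ell)}\|_F^2/(r-k+1)\). Summing over \(\ell\) and dividing by \(p\) gives the one-step inequality. The random index is common to all frequencies, but that causes no difficulty, because we only bound a conditional expectation of a sum over \(\ell\) and the same \(q_{j,k}\) multiplies every frequency.

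The main subtlety is this use of frequency-wise structure under a common distribution. The per-frequency weights \(1/a^{(\ell)}_{j,k}\) can blow up at frequencies where row \(j\) is small. Assumption~\ref{ass:tarp-alignment} is exactly what controls this. The points to verify carefully are the positivity argument and the normalization \(\sum_i a^{(\ell)}_{i,k}=r-k+1\).

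In the perfectly aligned case, \(\eta_k=1\), and
\[
\prod_{k=1}^r\frac{r-k+2}{r-k+1}=\frac{r+1}{1}=r+1 .
\]
This product telescopes to \(r+1\), recovering Theorem~\ref{th_cor_kres}.
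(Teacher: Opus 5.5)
Your proposal is correct and follows essentially the same route as the paper: the per-frequency residual recursion from Lemma~\ref{lem:elementary-oblique}, a one-step conditional expectation bounded via the alignment inequality and the normalization $\sum_i a_{i,k}^{(\ell)}=r-k+1$, summation over frequencies with Parseval, the tower property, and the telescoping product. You are slightly more explicit than the paper in two places: you invoke item~4 of Lemma~\ref{lem:tensor-oblique} to identify $\widehat R^{(r,\ell)}$ with $\widehat X^{(\ell)}\widehat{\widetilde\Pi}_J^{(\ell)}$, and you justify the normalization through $\operatorname{tr}P_{k-1}$.
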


\begin{proof}
Fix a frequency \(\ell\). Let
\[
\widehat R^{(0,\ell)}
=
\widehat X^{(\ell)}
\left[I-\widehat V^{(\ell)}(\widehat V^{(\ell)})^H\right],
\]
and define \(\widehat R^{(k,\ell)}\) recursively by the elementary projectors of Lemma~\ref{lem:elementary-oblique}. Conditionally on the history \(J_{k-1}\), an index \(j\) is selected with probability \(q_{j,k}\). Terms with \(q_{j,k}=0\) do not contribute. For every \(j\) with \(q_{j,k}>0\), the rank-growth assumption gives \(a_{j,k}^{(\ell)}>0\), and the one-step identity yields
\[
\mathbb E\!\left[\|\widehat R^{(k,\ell)}\|_F^2\mid J_{k-1}\right]
=
\|\widehat R^{(k-1,\ell)}\|_F^2
+
\sum_{\{j:\,q_{j,k}>0\}}
q_{j,k}
\frac{\|\widehat R^{(k-1,\ell)}(:,j)\|_2^2}{a_{j,k}^{(\ell)}}.
\]
For every term in this sum, the alignment condition gives
\[
\frac{q_{j,k}}{a_{j,k}^{(\ell)}}
\leq
\frac{\eta_k}{\sum_{i=1}^n a_{i,k}^{(\ell)}}
=
\frac{\eta_k}{r-k+1}.
\]
Consequently,

\[
\begin{aligned}
\mathbb E\!\left[\|\widehat R^{(k,\ell)}\|_F^2\mid J_{k-1}\right]
&\leq
\|\widehat R^{(k-1,\ell)}\|_F^2
+
\frac{\eta_k}{r-k+1}
\sum_{\{j:\,q_{j,k}>0\}}
\|\widehat R^{(k-1,\ell)}(:,j)\|_2^2\\
&\leq
\left(1+\frac{\eta_k}{r-k+1}\right)
\|\widehat R^{(k-1,\ell)}\|_F^2.
\end{aligned}
\]

For \(k=0,\ldots,r\), let \(\widehat{\mathcal R}^{(k)}\) be the tensor whose \(\ell\)-th frontal slice is \(\widehat R^{(k,\ell)}\), and set
\[
\mathcal R^{(k)}
=
\texttt{ifft}(\widehat{\mathcal R}^{(k)},[],3).
\]
Summing the preceding conditional estimate over the frequencies and using Parseval's identity gives
\[
\mathbb E\!\left[\|\mathcal R^{(k)}\|_F^2\mid J_{k-1}\right]
\leq
\left(1+\frac{\eta_k}{r-k+1}\right)
\|\mathcal R^{(k-1)}\|_F^2.
\]
Applying the tower property successively for \(k=1,\ldots,r\) yields
\[
\mathbb E\|\mathcal R^{(r)}\|_F^2
\leq
\Gamma_\eta\|\mathcal R^{(0)}\|_F^2.
\]
By Lemma~\ref{lem:elementary-oblique}, the product of the elementary factors is the final oblique projector. Hence
\[
\mathcal R^{(0)}
=
\mathcal X-\mathcal X*\mathcal V*\mathcal V^\top,
\qquad
\mathcal R^{(r)}
=
\mathcal X-\mathcal X(:,J,:)*\mathcal V(J,:,:)^{-T}*\mathcal V^\top.
\]
This proves the bound. If \(\eta_k=1\) for all \(k\), then
\[
\Gamma_\eta
=
\prod_{k=1}^r\left(1+\frac{1}{r-k+1}\right)
=
\prod_{s=1}^r\frac{s+1}{s}
=
r+1.
\]
\end{proof}

\paragraph{Consequences of Theorem~\ref{thm:tarp-error}}
We keep the notation \(\Gamma_\eta\) introduced in Theorem~\ref{thm:tarp-error}.

\begin{corollary}[Jensen's inequality bound]
\label{cor:tarp-jensen}
Under the assumptions of Theorem~\ref{thm:tarp-error}, we have
\[
\mathbb E
\left[
\left\|
\mathcal X
-
\mathcal X(:,J,:)
*
\mathcal V(J,:,:)^{-T}
*
\mathcal V^\top
\right\|_F
\right]
\leq
\sqrt{\Gamma_{\eta}}
\left\|
\mathcal X
-
\mathcal X
*
\mathcal V
*
\mathcal V^\top
\right\|_F .
\]
\end{corollary}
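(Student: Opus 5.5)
The plan is to derive the corollary directly from Theorem~\ref{thm:tarp-error} via Jensen's inequality applied to the concave square root. Write
\[
D_J=\left\|\mathcal X-\mathcal X(:,J,:)*\mathcal V(J,:,:)^{-T}*\mathcal V^\top\right\|_F .
\]
This is a nonnegative random variable depending on the random index set \(J\). Under Assumption~\ref{ass:tarp-alignment}, the rank-growth condition ensures that \(E_J^H\widehat V^{(\ell)}\) is nonsingular at every frequency for every history of positive probability. Hence \(\mathcal V(J,:,:)^{-T}\) exists almost surely and \(D_J\) is well defined almost surely. Since \(J\) takes finitely many values, \(D_J\) is bounded and all expectations are finite.

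The key step is the Cauchy--Schwarz (equivalently Jensen) inequality \(\mathbb E[D_J]\le\bigl(\mathbb E[D_J^2]\bigr)^{1/2}\). It follows from concavity of \(t\mapsto\sqrt t\) on \([0,\infty)\) applied to \(D_J^2\), or from \(\mathbb E[D_J\cdot1]\le(\mathbb E D_J^2)^{1/2}(\mathbb E 1)^{1/2}\). Theorem~\ref{thm:tarp-error} bounds \(\mathbb E[D_J^2]\) by \(\Gamma_\eta\|\mathcal X-\mathcal X*\mathcal V*\mathcal V^\top\|_F^2\). Taking square roots, which is monotone on nonnegative reals, gives
\[
\mathbb E[D_J]\le\sqrt{\Gamma_\eta}\,\left\|\mathcal X-\mathcal X*\mathcal V*\mathcal V^\top\right\|_F ,
\]
which is the claim.

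There is no real obstacle here. The only point requiring care is that \(D_J\) be almost surely defined and square-integrable, and the finiteness of the sample space together with the rank-growth part of Assumption~\ref{ass:tarp-alignment} settles this. I would also note that in the perfectly aligned case this gives the factor \(\sqrt{r+1}\), matching the matrix consequence listed after Theorem~\ref{th_cor_kres}.
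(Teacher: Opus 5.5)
Your proposal is correct and follows the paper's proof: Jensen's inequality for the concave map \(t\mapsto\sqrt t\) (equivalently Cauchy--Schwarz) applied to the squared-error bound of Theorem~\ref{thm:tarp-error}. The extra remarks on almost-sure well-definedness, which follows from rank growth at \(k=r\), and on finiteness of the sample space are valid additions that the paper leaves implicit.
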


\begin{proof}
This follows directly from Jensen's inequality applied to the concave function \(t\mapsto \sqrt{t}\), together with Theorem~\ref{thm:tarp-error}.
\end{proof}

\begin{corollary}[Orthogonal projection bound]
\label{cor:tarp-orthogonal}
Let \(\mathcal Q_J\) be an orthonormal basis of \(\operatorname{span}_t(\mathcal X(:,J,:))\), and define the orthogonal projector \(\Pi_J=\mathcal Q_J*\mathcal Q_J^\top\).
Under the assumptions of Theorem~\ref{thm:tarp-error}, we have
\[
\mathbb E
\left[
\left\|
\mathcal X
-
\Pi_J * \mathcal X
\right\|_F^2
\right]
\leq
\Gamma_{\eta}
\left\|
\mathcal X
-
\mathcal X
*
\mathcal V
*
\mathcal V^\top
\right\|_F^2 .
\]
\end{corollary}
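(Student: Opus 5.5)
The plan is to compare the orthogonal projection error pointwise, for every realization of \(J\), with the oblique interpolation error bounded in Theorem~\ref{thm:tarp-error}, and then take expectations. Under Assumption~\ref{ass:tarp-alignment}, the rank-growth condition at \(k=r\) guarantees that \(E_J^H\widehat V^{(\ell)}\) is nonsingular for every frequency almost surely. Hence \(\mathcal V(J,:,:)^{-T}\) is well defined and the oblique approximant
\[
\mathcal Y_J=\mathcal X(:,J,:)*\mathcal V(J,:,:)^{-T}*\mathcal V^\top
\]
exists.

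The key step is to show that, for each fixed \(J\),
\[
\|\mathcal X-\Pi_J*\mathcal X\|_F\leq\|\mathcal X-\mathcal Y_J\|_F.
\]
I will verify this in the Fourier domain, one frequency at a time. At frequency \(\ell\), set \(C=\widehat X^{(\ell)}(:,J)\). The slice \(\widehat Q_J^{(\ell)}\) is an orthonormal basis of \(\operatorname{range}(C)\); more precisely, \(\widehat Q_J^{(\ell)}(\widehat Q_J^{(\ell)})^H\) is the orthogonal projector onto \(\operatorname{range}(C)\), with zero columns allowed if \(C\) is rank deficient. This is how \(\operatorname{span}_t\) should be read, and I would remark on it.

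The oblique approximant at this frequency is
\[
\widehat Y^{(\ell)}=C\,M,\qquad M=\bigl((\widehat V^{(\ell)})^HE_J\bigr)^{-1}(\widehat V^{(\ell)})^H,
\]
so every column of \(\widehat Y^{(\ell)}\) lies in \(\operatorname{range}(C)\). The orthogonal projection minimizes the Frobenius distance from \(\widehat X^{(\ell)}\) to matrices whose columns lie in \(\operatorname{range}(C)\). Therefore
\[
\|\widehat X^{(\ell)}-\widehat Q_J^{(\ell)}(\widehat Q_J^{(\ell)})^H\widehat X^{(\ell)}\|_F^2\leq\|\widehat X^{(\ell)}-\widehat Y^{(\ell)}\|_F^2 .
\]
Summing over \(\ell=1,\ldots,p\) and applying Parseval's identity gives the pointwise tensor inequality. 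This uses that the t-product corresponds to slicewise products in the Fourier domain, which holds for \(\Pi_J*\mathcal X\) and for \(\mathcal Y_J\).

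Finally, I take expectations over the T-ARP randomness and apply Theorem~\ref{thm:tarp-error} to the right-hand side. This yields the factor \(\Gamma_\eta\) times \(\|\mathcal X-\mathcal X*\mathcal V*\mathcal V^\top\|_F^2\).

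The only delicate point is ensuring that \(\Pi_J\) really acts, at each frequency, as the orthogonal projector onto the column space of \(C\), including at frequencies where \(C\) may have rank less than \(r\). I would handle this by defining \(\Pi_J=\mathcal C*\mathcal C^\dagger\), which is slicewise the range projector, and noting that it coincides with \(\mathcal Q_J*\mathcal Q_J^\top\) whenever a T-QR with \(\mathcal Q_J^\top*\mathcal Q_J=\mathcal I_r\) exists. Even in the rank-deficient case the projector is still the best approximation from that range, so the minimization argument goes through unchanged.
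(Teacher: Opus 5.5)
Your proposal is correct and is essentially the paper's argument: for each fixed \(J\), the oblique interpolant \(\mathcal X(:,J,:)*\mathcal V(J,:,:)^{-T}*\mathcal V^\top\) has its lateral slices in the t-span of \(\mathcal X(:,J,:)\), so the orthogonal projection error is pointwise no larger, and squaring and taking expectations with Theorem~\ref{thm:tarp-error} finishes the proof. Your frequency-by-frequency verification via Parseval and your remark on rank-deficient Fourier slices spell out what the paper's one-line optimality claim leaves implicit. Even if a T-QR factor \(\mathcal Q_J\) with \(\mathcal Q_J^\top*\mathcal Q_J=\mathcal I_r\) spans a strictly larger space at a rank-deficient frequency, the inequality only improves.
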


\begin{proof}
For a fixed index set \(J\), the orthogonal projection onto
\(\operatorname{span}_t(\mathcal X(:,J,:))\) gives the smallest Frobenius-norm error among all approximations whose lateral slices belong to this tensor subspace. In particular,
\[
\left\|
\mathcal X
-
\Pi_J * \mathcal X
\right\|_F
\leq
\left\|
\mathcal X
-
\mathcal X(:,J,:)
*
\mathcal V(J,:,:)^{-T}
*
\mathcal V^\top
\right\|_F .
\]
Squaring this inequality and taking expectations gives the result by Theorem~\ref{thm:tarp-error}.
\end{proof}

\subsection{Optimality for an exact row-space basis}

When \(\mathcal V\) contains the first \(r\) right singular tensors from the T-SVD of \(\mathcal X\), the residual term in Theorem~\ref{thm:tarp-error} is exactly the optimal tubal-rank-\(r\) approximation error. This gives the following consequence.

\begin{corollary}[Optimal t-CSSP bound under frequency alignment]
\label{cor:tcssp-optimal}
Let
\[
\mathcal X
=
\mathcal U
*
\mathcal S
*
\mathcal V_{\mathrm{opt}}^\top
\]
be the T-SVD of \(\mathcal X\), and let
\[
\mathcal V
=
\mathcal V_{\mathrm{opt}}(:,1:r,:)
\]
contain the first \(r\) right singular tensors. Assume that all hypotheses of Theorem~\ref{thm:tarp-error}, equivalently Assumption~\ref{ass:tarp-alignment}, hold for this basis. Then
\[
\mathbb E
\left[
\left\|
\mathcal X
-
\Pi_J * \mathcal X
\right\|_F^2
\right]
\leq
\Gamma_{\eta}
\sum_{k=r+1}^{\min(n_1,n_2)}
\left\|
\mathcal S(k,k,:)
\right\|_2^2 .
\]
In particular, if the leverage-score distributions are perfectly aligned across the Fourier slices, then
\[
\mathbb E
\left[
\left\|
\mathcal X
-
\Pi_J * \mathcal X
\right\|_F^2
\right]
\leq
(r+1)
\sum_{k=r+1}^{\min(n_1,n_2)}
\left\|
\mathcal S(k,k,:)
\right\|_2^2 .
\]
\end{corollary}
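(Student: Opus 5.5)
The plan is to apply Corollary~\ref{cor:tarp-orthogonal} with \(\mathcal V=\mathcal V_{\mathrm{opt}}(:,1:r,:)\) and then evaluate the right-hand side exactly. All hypotheses of Theorem~\ref{thm:tarp-error} are assumed for this basis, so that corollary gives
\[
\mathbb E\left[\|\mathcal X-\Pi_J*\mathcal X\|_F^2\right]\leq \Gamma_\eta\,\|\mathcal X-\mathcal X*\mathcal V*\mathcal V^\top\|_F^2 .
\]
First check that \(\mathcal V^\top*\mathcal V=\mathcal I_r\). This holds because the first \(r\) lateral slices of an orthogonal tensor are orthonormal: at each frequency, \(\widehat V^{(\ell)}\) consists of the first \(r\) columns of a unitary matrix.

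The second step identifies the residual. Work frequency by frequency. At frequency \(\ell\), \(\widehat X^{(\ell)}=\widehat U^{(\ell)}\widehat S^{(\ell)}(\widehat V_{\mathrm{opt}}^{(\ell)})^H\) is a matrix SVD, and \(\widehat V^{(\ell)}\) holds its first \(r\) right singular vectors. Hence \(\widehat X^{(\ell)}\widehat V^{(\ell)}(\widehat V^{(\ell)})^H\) is the truncated SVD of \(\widehat X^{(\ell)}\), and its squared residual is \(\sum_{k>r}|\widehat S^{(\ell)}(k,k)|^2\). Summing over \(\ell\) and using Parseval's identity gives
\[
\frac1p\sum_{k>r}\sum_\ell|\widehat S^{(\ell)}(k,k)|^2=\sum_{k>r}\|\mathcal S(k,k,:)\|_2^2 .
\]
Parseval is applied to each tube \(\mathcal S(k,k,:)\). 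The same quantity can also be obtained in the spatial domain: \(\mathcal X*\mathcal V*\mathcal V^\top=\mathcal X_r\), and the paper has already stated that \(\|\mathcal X-\mathcal X_r\|_F^2\) equals this sum.

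The only point needing care is that the paper's T-SVD is taken with its singular values ordered within each Fourier slice. The truncation \(\mathcal V_{\mathrm{opt}}(:,1:r,:)\) then corresponds exactly to the leading \(r\) singular vectors at every frequency, and the tube norms are those of the f-diagonal \(\mathcal S\). I take this as the convention implicit in the displayed optimal-error formula. The perfectly aligned statement then follows by setting \(\eta_k=1\), for which Theorem~\ref{thm:tarp-error} already gives \(\Gamma_\eta=r+1\).
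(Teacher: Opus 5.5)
Your proposal is correct and follows the paper's proof: apply Corollary~\ref{cor:tarp-orthogonal} and identify \(\|\mathcal X-\mathcal X*\mathcal V*\mathcal V^\top\|_F^2\) with the tail sum \(\sum_{k>r}\|\mathcal S(k,k,:)\|_2^2\) via \(\mathcal X*\mathcal V*\mathcal V^\top=\mathcal X_r\); your slicewise Parseval computation is simply an explicit check of that identity. Your ordering caveat is harmless but not needed for the stated bound, because \(\|\widehat X^{(\ell)}-\widehat X^{(\ell)}\widehat V^{(\ell)}(\widehat V^{(\ell)})^H\|_F^2=\sum_{k>r}|\widehat S^{(\ell)}(k,k)|^2\) holds for any factorization with unitary outer factors and diagonal \(\widehat S^{(\ell)}\); the ordering matters only when the right-hand side is read as the optimal tubal-rank-\(r\) error.
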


\begin{proof}
For
\[
\mathcal V
=
\mathcal V_{\mathrm{opt}}(:,1:r,:),
\]
we have
\[
\mathcal X
*
\mathcal V
*
\mathcal V^\top
=
\mathcal X_r,
\]
where \(\mathcal X_r\) is the truncated T-SVD approximation of tubal rank \(r\). Hence
\[
\left\|
\mathcal X
-
\mathcal X
*
\mathcal V
*
\mathcal V^\top
\right\|_F^2
=
\left\|
\mathcal X
-
\mathcal X_r
\right\|_F^2
=
\sum_{k=r+1}^{\min(n_1,n_2)}
\left\|
\mathcal S(k,k,:)
\right\|_2^2 .
\]
The result follows from Corollary~\ref{cor:tarp-orthogonal}.
\end{proof}

We now turn to tensor cross approximation. The first application of T-ARP selects the lateral slices, while the second application selects the horizontal slices. Since the second sampling step is itself a common-index T-ARP procedure, it may have its own frequency-alignment factor. We denote it by
\[
\Gamma_{\theta}
=
\prod_{k=1}^{r}
\left(
1+
\frac{\theta_k}{r-k+1}
\right),
\]
where the constants \(\theta_k\) control the frequency alignment for the second T-ARP call, conditionally on the selected lateral slices.

\begin{theorem}[T-Cross approximation error under frequency alignment]
\label{thm:tcross-error}
Let \(J\) be obtained by applying T-ARP to a right basis \(\mathcal V\), set \(\mathcal C=\mathcal X(:,J,:)\), and let \(\mathcal C=\mathcal Q_J*\mathcal R_J\) be a thin T-QR factorization with \(r\) orthonormal lateral slices. Apply T-ARP to \(\mathcal Q_J\) to obtain \(I\). Assume that \(\mathcal X(I,J,:)\) is t-invertible almost surely. Assume that the first T-ARP call satisfies Assumption~\ref{ass:tarp-alignment} with factor \(\Gamma_\eta\), and that, conditionally on every \(J\) in its support, the second call satisfies the analogous assumptions with a deterministic uniform factor \(\Gamma_\theta\). Then
\[
\mathbb E\left\|\mathcal X-\mathcal X(:,J,:)*\mathcal X(I,J,:)^{-1}*\mathcal X(I,:,:)\right\|_F^2
\leq \Gamma_\theta\Gamma_\eta\|\mathcal X-\mathcal X*\mathcal V*\mathcal V^\top\|_F^2.
\]
If both calls are perfectly aligned, the factor is \((r+1)^2\).
\end{theorem}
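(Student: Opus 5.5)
Following the matrix proof of Cortinovis and Kressner for ARP-cross, I will split the cross residual into a column part and a row part, each with its own $(r+1)$-type factor. The argument is Fourier-slicewise for the algebra and uses the tower property over the two random stages.

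\textbf{Step 1 (deterministic identity).} Fix $J$ and $I$ in the support, and write $\mathcal E_I=\mathcal I_m(:,I,:)$. Since $\mathcal C=\mathcal Q_J*\mathcal R_J$ and $\mathcal X(I,J,:)=\mathcal E_I^\top*\mathcal Q_J*\mathcal R_J$ is t-invertible, both $\mathcal R_J$ and $\mathcal E_I^\top*\mathcal Q_J$ are t-invertible; this can be checked at each Fourier slice. Hence
\[
\mathcal C*\mathcal X(I,J,:)^{-1}*\mathcal X(I,:,:)=\mathcal Q_J*(\mathcal E_I^\top*\mathcal Q_J)^{-1}*\mathcal E_I^\top*\mathcal X=(\mathcal I_m-\widetilde\Pi_I^\top)*\mathcal X,
\]
where $\widetilde\Pi_I=\mathcal I_m-\mathcal E_I*(\mathcal Q_J^\top*\mathcal E_I)^{-1}*\mathcal Q_J^\top$ is the oblique projector of Lemma~\ref{lem:tensor-oblique} built from $\mathcal Q_J$. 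The cross residual is therefore $\widetilde\Pi_I^\top*\mathcal X$. By item 2 of Lemma~\ref{lem:tensor-oblique}, transposed, $\widetilde\Pi_I^\top*\mathcal Q_J=0$, so
\[
\widetilde\Pi_I^\top*\mathcal X=\widetilde\Pi_I^\top*(\mathcal I_m-\mathcal Q_J*\mathcal Q_J^\top)*\mathcal X.
\]

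\textbf{Step 2 (second call, conditional on $J$).} Transpose the residual and apply Theorem~\ref{thm:tarp-error} to the tensor $\mathcal Y=\bigl((\mathcal I-\mathcal Q_J*\mathcal Q_J^\top)*\mathcal X\bigr)^\top$ with basis $\mathcal Q_J$, whose columns satisfy $\mathcal Q_J^\top*\mathcal Q_J=\mathcal I_r$. The quantity to bound is
\[
\mathcal Y*\widetilde\Pi_I=\mathcal Y-\mathcal Y(:,I,:)*\mathcal Q_J(I,:,:)^{-T}*\mathcal Q_J^\top.
\]
Since $\mathcal Y*\mathcal Q_J=0$, the reference error $\|\mathcal Y-\mathcal Y*\mathcal Q_J*\mathcal Q_J^\top\|_F$ equals $\|\mathcal Y\|_F$. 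Transposition preserves the Frobenius norm, so, conditionally on $J$ and with the uniform factor $\Gamma_\theta$,
\[
\mathbb E\bigl[\|\mathrm{residual}\|_F^2\mid J\bigr]\le\Gamma_\theta\|\mathcal X-\Pi_J*\mathcal X\|_F^2.
\]
One point needs checking here. The theorem's residual recursion starts from $\mathcal Y(\mathcal I-\mathcal Q_J\mathcal Q_J^\top)=\mathcal Y$, and the second call's alignment hypotheses are exactly Assumption~\ref{ass:tarp-alignment} for $\mathcal Q_J$. Its rank growth also supplies the invertibility of $\mathcal E_I^\top*\mathcal Q_J$.

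\textbf{Step 3 (first call).} Take the expectation over $J$ using the tower property. Because $\Gamma_\theta$ is deterministic and uniform over the support of $J$, it factors out, and Corollary~\ref{cor:tarp-orthogonal} gives $\mathbb E\|\mathcal X-\Pi_J*\mathcal X\|_F^2\le\Gamma_\eta\|\mathcal X-\mathcal X*\mathcal V*\mathcal V^\top\|_F^2$. Multiplying the two factors proves the bound. In the perfectly aligned case each factor equals $r+1$, as computed in Theorem~\ref{thm:tarp-error}, which gives $(r+1)^2$.

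The main obstacle is Step 1 together with its compatibility with Step 2. The transposes of the t-product must be handled correctly: $\mathcal Q_J^\top*\mathcal E_I$ versus $\mathcal E_I^\top*\mathcal Q_J$, and the left versus right placement of the oblique projector. I also need to confirm that the "transposed-problem" version of Theorem~\ref{thm:tarp-error}, i.e.\ with rows of $\mathcal X$ playing the role of columns, is literally the theorem applied to $\mathcal Y$. I would verify all of these slice by slice in the Fourier domain, where they reduce to the matrix identities of Lemma~\ref{lem:tensor-oblique} with $V^H$ replaced appropriately.
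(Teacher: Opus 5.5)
Your proposal is correct and follows essentially the same route as the paper. You reduce the cross approximation to the row interpolant $\mathcal Q_J*(\mathcal E_I^\top*\mathcal Q_J)^{-1}*\mathcal E_I^\top*\mathcal X$, apply Theorem~\ref{thm:tarp-error} conditionally on $J$ to get $\Gamma_\theta\|\mathcal X-\mathcal Q_J*\mathcal Q_J^\top*\mathcal X\|_F^2$, and finish with the tower property and the orthogonal-projection bound for the first call. Two small differences are cosmetic. Your detour through $\mathcal Y=((\mathcal I-\mathcal Q_J*\mathcal Q_J^\top)*\mathcal X)^\top$ gives the same bound as applying the theorem directly to $\mathcal X^\top$ (as the paper does), because $\mathcal X^\top-\mathcal X^\top*\mathcal Q_J*\mathcal Q_J^\top=\mathcal Y$ and the alignment hypotheses depend only on $\mathcal Q_J$. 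Your Step~3 is exactly Corollary~\ref{cor:tarp-orthogonal}, which the paper re-derives inline.
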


\begin{proof}
Fix an index set \(J\) in the support of the first T-ARP call. Since
\[
\mathcal X(I,J,:)
=
\mathcal E_I^\top*\mathcal X(:,J,:)
=
\mathcal E_I^\top*\mathcal Q_J*\mathcal R_J
\]
is invertible, both square factors \(\mathcal R_J\) and \(\mathcal E_I^\top*\mathcal Q_J\) are invertible. Using \((\mathcal A*\mathcal B)^{-1}=\mathcal B^{-1}*\mathcal A^{-1}\), we obtain

\[
\begin{aligned}
\mathcal X(:,J,:)*\mathcal X(I,J,:)^{-1}
&=
\mathcal Q_J*\mathcal R_J*
\bigl(\mathcal E_I^\top*\mathcal Q_J*\mathcal R_J\bigr)^{-1}\\
&=
\mathcal Q_J*\mathcal R_J*
\mathcal R_J^{-1}*
(\mathcal E_I^\top*\mathcal Q_J)^{-1}\\
&=
\mathcal Q_J*(\mathcal E_I^\top*\mathcal Q_J)^{-1}.
\end{aligned}
\]

Therefore the tensor cross approximation is exactly the row interpolant
\[
\mathcal Q_J*(\mathcal E_I^\top*\mathcal Q_J)^{-1}*
\mathcal E_I^\top*\mathcal X.
\]

Conditionally on \(J\), apply Theorem~\ref{thm:tarp-error} to \(\mathcal X^\top\) with right basis \(\mathcal Q_J\). After transposing the resulting approximation back, and using invariance of the Frobenius norm under tensor transposition, we get
\[
\mathbb E_{I\mid J}\!\left[
\left\|
\mathcal X-\mathcal X(:,J,:)*\mathcal X(I,J,:)^{-1}*\mathcal X(I,:,:)
\right\|_F^2
\right]
\leq
\Gamma_\theta
\|\mathcal X-\mathcal Q_J*\mathcal Q_J^\top*\mathcal X\|_F^2.
\]
For this fixed \(J\), the tensor \(\mathcal Q_J*\mathcal Q_J^\top*\mathcal X\) is the orthogonal projection of \(\mathcal X\) onto the t-span of \(\mathcal X(:,J,:)\). Hence
\[
\|\mathcal X-\mathcal Q_J*\mathcal Q_J^\top*\mathcal X\|_F^2
\leq
\left\|
\mathcal X-\mathcal X(:,J,:)*\mathcal V(J,:,:)^{-T}*\mathcal V^\top
\right\|_F^2.
\]
Using the law of total expectation and then Theorem~\ref{thm:tarp-error},

\[
\begin{aligned}
\mathbb E_{I,J}\!\left[
\left\|
\mathcal X-\mathcal X(:,J,:)*\mathcal X(I,J,:)^{-1}*\mathcal X(I,:,:)
\right\|_F^2
\right]
&=
\mathbb E_J\!\left[
\mathbb E_{I\mid J}\!\left[\cdot\right]
\right]\\
&\leq
\Gamma_\theta
\mathbb E_J
\|\mathcal X-\mathcal Q_J*\mathcal Q_J^\top*\mathcal X\|_F^2\\
&\leq
\Gamma_\theta\Gamma_\eta
\|\mathcal X-\mathcal X*\mathcal V*\mathcal V^\top\|_F^2.
\end{aligned}
\]

This proves the result.
\end{proof}

The Discrete Empirical Interpolation Method (DEIM) selects indices to approximate a function from a reduced basis. The same argument gives a tensor analogue with the frequency-alignment factor.

\begin{corollary}[t-DEIM error bound under frequency alignment]
\label{cor:tdeim-error}
Let \(\mathcal V\in\mathbb R^{n\times r\times p}\) satisfy \(\mathcal V^\top*\mathcal V=\mathcal I_r\), let \(\mathcal F\in\mathbb R^{n\times1\times p}\), and let \(I\) be returned by T-ARP applied to \(\mathcal V\). Assume all hypotheses of Theorem~\ref{thm:tarp-error}. Then
\[
\mathbb E\left\|\mathcal F-\mathcal V*(\mathcal E_I^\top*\mathcal V)^{-1}*\mathcal E_I^\top*\mathcal F\right\|_F^2
\leq \Gamma_\eta\|\mathcal F-\mathcal V*\mathcal V^\top*\mathcal F\|_F^2.
\]
In the perfectly aligned case, \(\Gamma_\eta=r+1\).
\end{corollary}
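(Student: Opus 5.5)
The plan is to reduce the t-DEIM statement to Theorem~\ref{thm:tarp-error} applied to a tensor with a single lateral slice's worth of data, after transposition. Set \(\mathcal X:=\mathcal F^\top\in\mathbb R^{1\times n\times p}\), so that \(\mathcal X\) has one horizontal slice and \(n\) lateral slices, and use the same orthonormal basis \(\mathcal V\in\mathbb R^{n\times r\times p}\) as the right basis. The index set \(I\) returned by T-ARP applied to \(\mathcal V\) is then exactly the random set \(J\) of Theorem~\ref{thm:tarp-error}, and Assumption~\ref{ass:tarp-alignment} is assumed for this \(\mathcal V\). The theorem gives
\[
\mathbb E\left\|\mathcal F^\top-\mathcal F^\top(:,I,:)*\mathcal V(I,:,:)^{-T}*\mathcal V^\top\right\|_F^2\leq\Gamma_\eta\left\|\mathcal F^\top-\mathcal F^\top*\mathcal V*\mathcal V^\top\right\|_F^2 .
\]
The rank-growth part of the assumption guarantees that \(\mathcal V(I,:,:)=\mathcal E_I^\top*\mathcal V\) is t-invertible almost surely, since every Fourier slice \(E_I^H\widehat V^{(\ell)}\) has rank \(r\).

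Next I would transpose both sides. I would use the t-product transpose rules \((\mathcal A*\mathcal B)^\top=\mathcal B^\top*\mathcal A^\top\) and \((\mathcal A^{-1})^\top=(\mathcal A^\top)^{-1}\), both checked slicewise in the Fourier domain where they reduce to conjugate-transpose identities. I would also use \(\mathcal F^\top(:,I,:)=(\mathcal E_I^\top*\mathcal F)^\top\). With these, the left-hand residual transposes to
\[
\mathcal F-\mathcal V*(\mathcal V(I,:,:))^{-1}*\mathcal E_I^\top*\mathcal F=\mathcal F-\mathcal V*(\mathcal E_I^\top*\mathcal V)^{-1}*\mathcal E_I^\top*\mathcal F,
\]
and the right-hand residual transposes to \(\mathcal F-\mathcal V*\mathcal V^\top*\mathcal F\). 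Since tensor transposition only permutes frontal slices and transposes them, it preserves the Frobenius norm, which gives the claim. The perfectly aligned value \(\Gamma_\eta=r+1\) is the computation already done at the end of the proof of Theorem~\ref{thm:tarp-error}.

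The main point to check is notational rather than substantive: that \(\mathcal V(I,:,:)^{-T}\) is really \((\mathcal V^\top*\mathcal E_I)^{-1}=((\mathcal E_I^\top*\mathcal V)^{-1})^\top\). This follows from the definition \(\mathcal A^{-T}=(\mathcal A^\top)^{-1}\) and the slicewise identity \(((\widehat A^{(\ell)})^H)^{-1}=((\widehat A^{(\ell)})^{-1})^H\).

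As a cross-check, I could instead rerun the proof of Theorem~\ref{thm:tarp-error} directly at each frequency. The residual \(\widehat F^{(\ell)H}(I-\widehat V\widehat V^H)\) is a single row, and its one-step recursion from Lemma~\ref{lem:elementary-oblique} is identical. Either route gives the bound.
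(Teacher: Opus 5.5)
Your proposal is correct and matches the paper's proof: apply Theorem~\ref{thm:tarp-error} to \(\mathcal F^\top\) with right basis \(\mathcal V\), then transpose the residual back. The transposition uses the order-reversal rule, \(\bigl(\mathcal V(I,:,:)^{-T}\bigr)^\top=(\mathcal E_I^\top*\mathcal V)^{-1}\), and invariance of the Frobenius norm under tensor transposition.
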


\begin{proof}
Apply Theorem~\ref{thm:tarp-error} to \(\mathcal X=\mathcal F^\top\in\mathbb R^{1\times n\times p}\) with right basis \(\mathcal V\). Its oblique approximation is
\[
\mathcal F^\top(:,I,:)*\mathcal V(I,:,:)^{-T}*\mathcal V^\top.
\]
Since \(\mathcal F^\top(:,I,:)=\mathcal F^\top*\mathcal E_I\), tensor transposition gives
\[
\bigl(\mathcal F^\top(:,I,:)\bigr)^\top
=
\mathcal E_I^\top*\mathcal F.
\]
Moreover, \(\mathcal V(I,:,:)=\mathcal E_I^\top*\mathcal V\), and therefore
\[
\left(\mathcal V(I,:,:)^{-T}\right)^\top
=
(\mathcal E_I^\top*\mathcal V)^{-1}.
\]
Reversing the order of the factors under transposition, the transpose of the oblique approximation is
\[
\mathcal V*
(\mathcal E_I^\top*\mathcal V)^{-1}*
\mathcal E_I^\top*\mathcal F.
\]
Finally, the Frobenius norm is invariant under tensor transposition, so Theorem~\ref{thm:tarp-error} gives the stated bound.
\end{proof}

\section{Numerical experiments}\label{SEc:NE}

This section presents numerical experiments on synthetic tensors, Kodak images, and a video sequence. The Kodak and video experiments were run on a MacBook Air with an M3 chip and 8~GB of RAM, using Python 3.11.14 and JAX 0.9.0.1. The synthetic and theorem-validation experiments were run separately in Google Colab using NumPy. The implementations and result files are available in the repository and its ``experiments'' branch: \url{https://github.com/ah-haqqdod/T-ARP} and \url{https://github.com/ah-haqqdod/T-ARP/tree/experiments}.

All experiments use relative error; visual data are also evaluated by PSNR and SSIM. The baselines are described in Section~\ref{subsec:decomposition_baselines}. In this section, lateral slices are called columns and horizontal slices are called rows. For the common-index methods, the target rank \(r\) is also the number of selected lateral and horizontal slices; ARP-T-CUR instead selects \(r\) rows and columns at each nonredundant Fourier frequency. In the practical benchmarks, T-ARP is run deterministically: at each step, the index with the largest current tensor leverage score is selected. The selected slices are evaluated through the two-sided projection reconstruction \(\mathcal C*\mathcal C^\dagger*\mathcal X*\mathcal R^\dagger*\mathcal R\). The separate experiment in Section~\ref{subsec:empirical-tarp-bound} uses the randomized T-ARP distribution and the exact one-sided interpolant from Theorem~\ref{thm:tarp-error}.

\begin{figure}[!hb]
  \centering
  \begin{tabular}{ccc}
    \begin{subfigure}[b]{0.27\textwidth}
      \includegraphics[width=\linewidth]{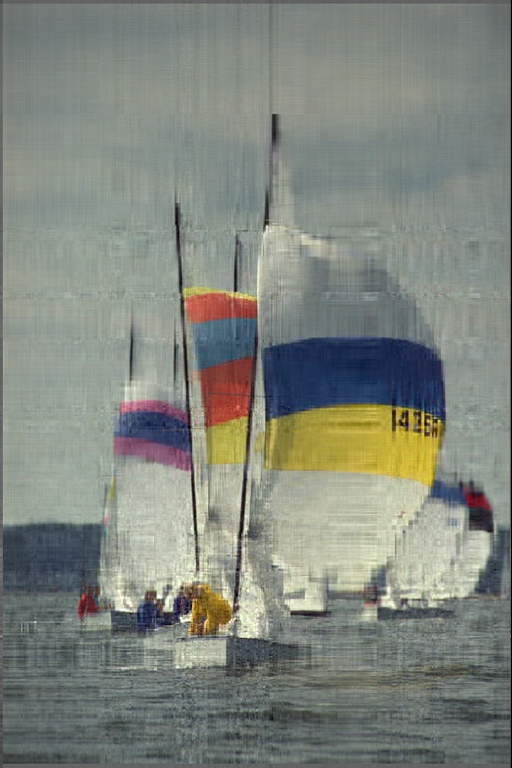}
      \caption{ARP-T-CUR}
    \end{subfigure} &
    \begin{subfigure}[b]{0.27\textwidth}
      \includegraphics[width=\linewidth]{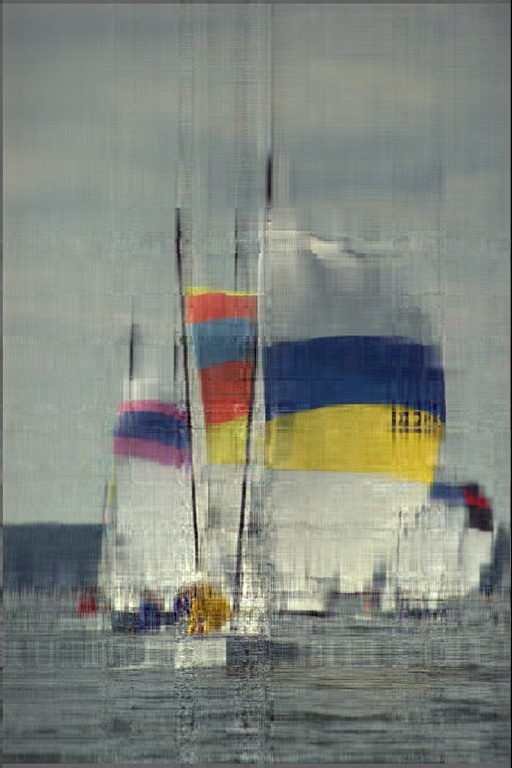}
      \caption{Uniform sampling}
    \end{subfigure} &
    \begin{subfigure}[b]{0.27\textwidth}
      \includegraphics[width=\linewidth]{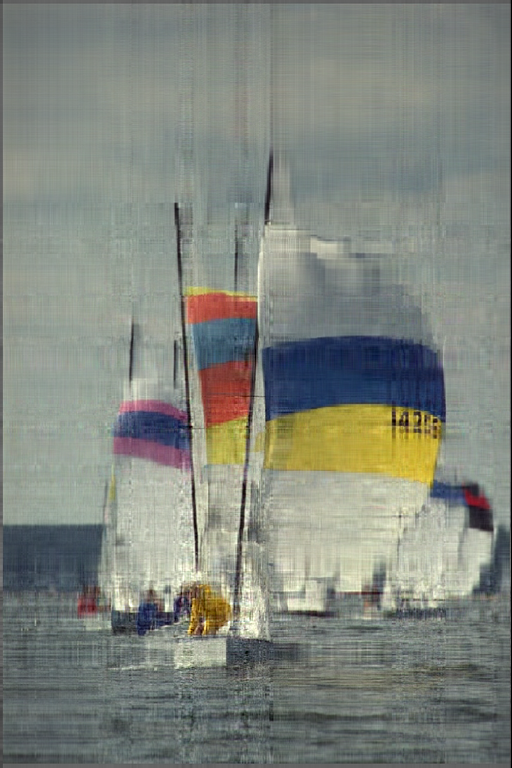}
      \caption{Lev. scores sampling}
    \end{subfigure} \\
    \begin{subfigure}[b]{0.27\textwidth}
      \includegraphics[width=\linewidth]{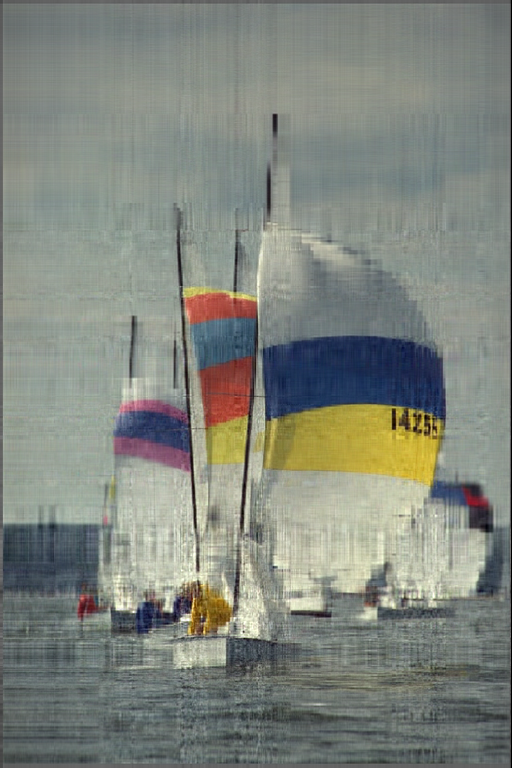}
      \caption{T-ARP}
    \end{subfigure} &
    \begin{subfigure}[b]{0.27\textwidth}
      \includegraphics[width=\linewidth]{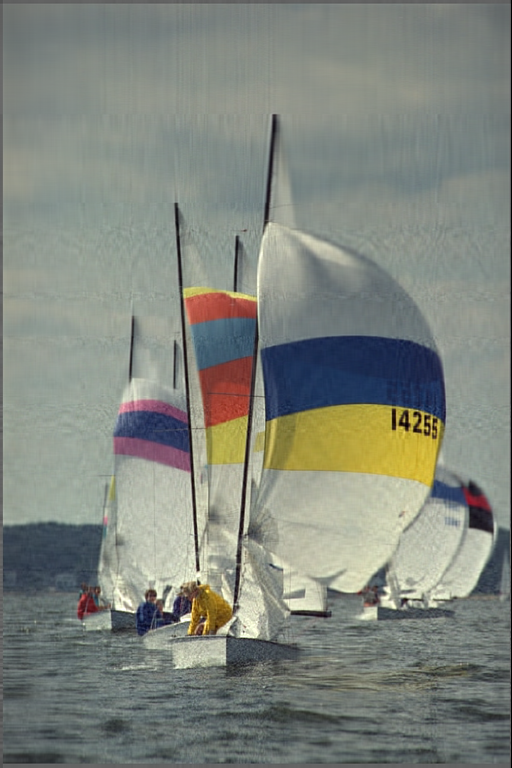}
      \caption{T-SVD}
    \end{subfigure} &
    \begin{subfigure}[b]{0.27\textwidth}
      \includegraphics[width=\linewidth]{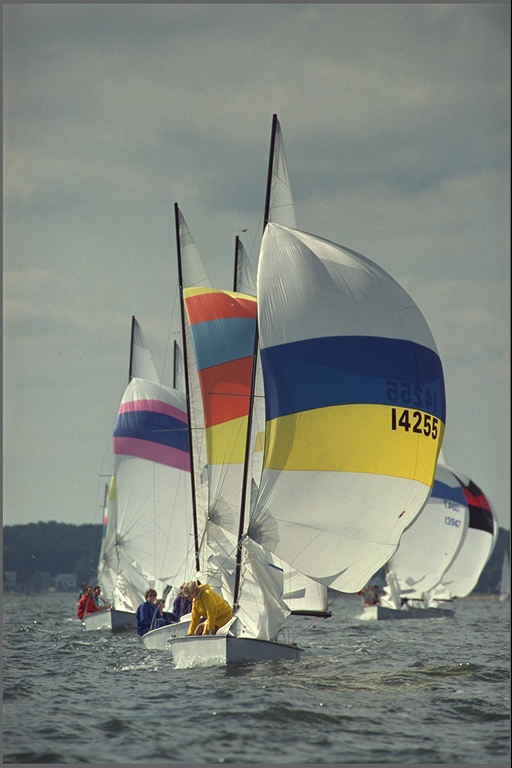}
      \caption{Original image}
    \end{subfigure} \\
  \end{tabular}
  \caption{Reconstructions of the Kodak image \textit{kodim9.png} with target rank \(r=60\), using \(\mathcal M \approx \mathcal C*\mathcal C^\dagger*\mathcal M*\mathcal R^\dagger*\mathcal R\). For the common-index methods, \(\mathcal C\in\mathbb R^{512\times60\times3}\) and \(\mathcal R\in\mathbb R^{60\times768\times3}\) contain the selected lateral and horizontal slices, respectively; ARP-T-CUR uses frequency-dependent selections. Additional reconstructions are available at \url{https://github.com/ah-haqqdod/T-ARP/tree/experiments/src/benchmarks/kodak/results/reconstructions}.} \label{fig:img_reconstruction_examples}
\end{figure}

\begin{figure}[!ht]
  \centering
  \begin{tabular}{cc}
    \begin{subfigure}[b]{0.3\textwidth}
      \includegraphics[width=\linewidth]{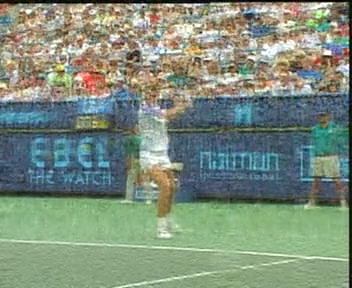}
      \caption{Uniform sampling}
    \end{subfigure} &
    \begin{subfigure}[b]{0.3\textwidth}
      \includegraphics[width=\linewidth]{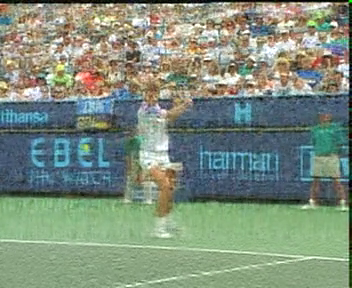}
      \caption{Lev. scores sampling}
    \end{subfigure} \\
    \begin{subfigure}[b]{0.3\textwidth}
      \includegraphics[width=\linewidth]{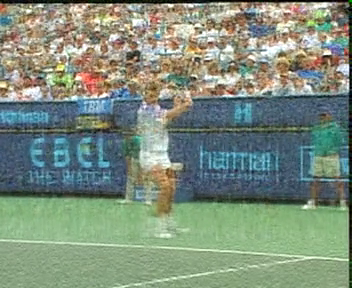}
      \caption{T-ARP}
    \end{subfigure} &
    \begin{subfigure}[b]{0.3\textwidth}
      \includegraphics[width=\linewidth]{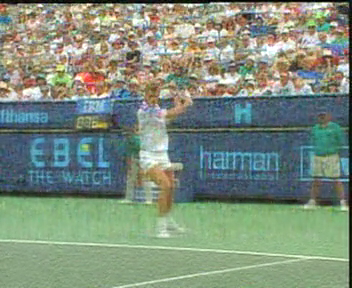}
      \caption{ARP T-CUR}
    \end{subfigure} \\
  \end{tabular}
  \caption{Frame~15 of the reconstructed YUV video \textit{stefan\_cif.yuv} with target rank \(r=60\), using \(\mathcal M \approx \mathcal C*\mathcal C^\dagger*\mathcal M*\mathcal R^\dagger*\mathcal R\). For the common-index methods, \(\mathcal C\in\mathbb R^{288\times60\times3\times90}\) and \(\mathcal R\in\mathbb R^{60\times352\times3\times90}\) contain the selected lateral and horizontal slices, respectively; ARP-T-CUR uses frequency-dependent selections. Additional reconstructions are available at \url{https://github.com/ah-haqqdod/T-ARP/tree/experiments/src/benchmarks/yuv/results}.}\label{fig:video_reconstruction_examples}
\end{figure}

\subsection{Metrics}
\label{subsec:exp_metrics}

\paragraph{Relative Error}
For two tensors \(\mathcal X,\mathcal Y\in\mathbb R^{m\times n\times p}\), the relative error is defined as
\[
\text{RE}(\mathcal X,\mathcal Y) = \frac{\|\mathcal X-\mathcal Y\|_F}{\|\mathcal X\|_F},
\]
where $\|\mathcal X\|_F$ is the Frobenius norm of $\mathcal X$.

\paragraph{Peak Signal-to-Noise Ratio (PSNR)}
For two images $\mathcal X$ and $\mathcal Y$ of size $m\times n\times 3$ with $L$ being the dynamic range of pixel values (e.g., $L=255$ for 8-bit images), \emph{PSNR} (in dB) is defined as
\[
\text{PSNR}(\mathcal X,\mathcal Y) = 10\log_{10}\left(\frac{L^2}{\text{MSE}(\mathcal X,\mathcal Y)}\right),
\]
where \(\text{MSE}(\mathcal X,\mathcal Y)=\|\mathcal X-\mathcal Y\|_F^2/(3mn)\) is the mean squared error; \emph{PSNR} is used to measure the quality of a reconstructed image compared to a reference.

\paragraph{Structural Similarity Index (SSIM)}
\emph{SSIM} is a perception-based model that considers changes in structural information, luminance, and contrast. The \emph{SSIM} between two images $\mathcal X$ and $\mathcal Y$ is computed as

$$\text{SSIM}(\mathcal X, \mathcal Y) = \frac{(2\mu_x\mu_y + C_1)(2\sigma_{xy} + C_2)}{(\mu_x^2 + \mu_y^2 + C_1)(\sigma_x^2 + \sigma_y^2 + C_2)},$$
where $\mu_x$, $\mu_y$ are the mean intensities, $\sigma_x^2$, $\sigma_y^2$ are the variances, and $\sigma_{xy}$ is the covariance. The constants $C_1 = (K_1L)^2$ and $C_2 = (K_2L)^2$ are used to avoid instability when the denominators are close to zero; typically $K_1=0.01$, $K_2=0.03$, and $L$ is the dynamic range of pixel values. Because the Kodak images are scaled to \([0,1]\), we use \(L=1\) for both PSNR and SSIM. 

\subsection{Decomposition Baselines}
\label{subsec:decomposition_baselines}

We use three slice-sampling baselines. Uniform slice sampling selects index sets \(J\subseteq\{1,\ldots,n\}\) and \(I\subseteq\{1,\ldots,m\}\), both of cardinality \(r\), uniformly without replacement. For \(\mathcal X\in\mathbb R^{m\times n\times p}\), it forms \(\mathcal C=\mathcal X(:,J,:)\in\mathbb R^{m\times r\times p}\) and \(\mathcal R=\mathcal X(I,:,:)\in\mathbb R^{r\times n\times p}\).

Length-squared slice sampling uses probabilities proportional to the squared Frobenius norms of the slices. For lateral slices,
\[
p_j=\frac{\|\mathcal X(:,j,:)\|_F^2}{\sum_{i=1}^n\|\mathcal X(:,i,:)\|_F^2},
\]
and \(r\) indices are sampled without replacement. Horizontal slices are selected in the same way from \(\mathcal X^\top\).

Leverage-score slice sampling uses the initial tensor leverage scores without the adaptive updates of T-ARP. It is described in Algorithms~\ref{alg:lev_scores_sampling} and~\ref{alg:tcross_lev_scores_compact}.

\begin{algorithm}[H]
\caption{Tensor leverage-score sampling}
\label{alg:lev_scores_sampling}
\begin{algorithmic}[1]
\Input A tensor \(\mathcal V\in\mathbb R^{n\times r\times p}\) with \(\mathcal V^\top*\mathcal V=\mathcal I_r\)
\Output Indices $J = (j_1,j_2, \ldots, j_r)$ for column subset selection;
    \State Initialize $J = ()$;
    \State Set \(p_j=\|\mathcal V(j,:,:)\|_F^2\) for \(j=1,\ldots,n\).
    \State Normalize \(p_j=p_j/\sum_{i=1}^{n}p_i\).
    \State Sample \(r\) distinct indices from \(\{1,\ldots,n\}\) with probabilities \(p_j\).
    \State \Return $J$
\end{algorithmic}
\end{algorithm}

\begin{algorithm}[H]
\caption{Tensor cross approximation with leverage-score sampling}
\label{alg:tcross_lev_scores_compact}
\begin{algorithmic}[1]
\Input A tensor \(\mathcal X\in\mathbb R^{n_1\times n_2\times n_3}\), a target rank \(r\leq\min(n_1,n_2)\), and a right basis \(\mathcal V\in\mathbb R^{n_2\times r\times n_3}\) satisfying \(\mathcal V^\top*\mathcal V=\mathcal I_r\).
\Output A reconstructed tensor \(\widehat{\mathcal X}\) of the same size as \(\mathcal X\).
\State \(J\leftarrow\text{TensorLeverageScoreSampling}(\mathcal V,r)\).
\State \(\mathcal C=\mathcal A_J\leftarrow\mathcal X(:,J,:)\).
\State \([\mathcal Q_J,\sim]\leftarrow\text{T-QR}(\mathcal A_J)\).
\State \(I\leftarrow\text{TensorLeverageScoreSampling}(\mathcal Q_J,r)\).
\State \(\mathcal R=\mathcal A_I\leftarrow\mathcal X(I,:,:)\).
\State \(\mathcal U\leftarrow\mathcal C^\dagger*\mathcal X*\mathcal R^\dagger\).
\State \(\widehat{\mathcal X}\leftarrow\mathcal C*\mathcal U*\mathcal R\).
\State \Return \(\widehat{\mathcal X}\).
\end{algorithmic}
\end{algorithm}

The three slice-sampling baselines use the reconstruction
\[
\widehat{\mathcal X}=\mathcal C*\mathcal C^\dagger*\mathcal X*\mathcal R^\dagger*\mathcal R.
\]
The truncated T-SVD is included separately as the best tubal-rank-\(r\) approximation in the Frobenius norm. It is not constrained to use actual lateral or horizontal slices.


\subsection{Results on Kodak dataset}
\label{subsec:results_on_kodak_dataset}

The Kodak dataset contains 24 RGB images, each in either landscape orientation, \(768\times512\times3\), or portrait orientation, \(512\times768\times3\). Pixel values are scaled from \([0,255]\) to \([0,1]\) before processing. The dataset is available at \url{https://r0k.us/graphics/kodak/}.

We compare ARP-T-CUR, T-ARP, and the Householder implementation of T-ARP with truncated T-SVD, uniform slice sampling, length-squared slice sampling, and leverage-score slice sampling. The baselines are defined in \Cref{subsec:decomposition_baselines}.

The algorithms are compared using \emph{relative error}, \emph{PSNR} and \emph{SSIM}.
The corresponding results for the methods listed in the tables are reported in
Table~\ref{tab:kodak_relative_error}, Table~\ref{tab:kodak_psnr}, and
Table~\ref{tab:kodak_ssim}, respectively; Figure~\ref{fig:kodak_metrics} also includes
length-squared sampling. These metrics
are defined in \Cref{subsec:exp_metrics}. Their mean values are also displayed in
Figure~\ref{fig:kodak_metrics}, and examples of image reconstructions are shown in
Figure~\ref{fig:img_reconstruction_examples}. The results show that both proposed constructions perform well against the considered
sampling baselines. ARP-T-CUR gives the lowest relative errors and the highest PSNR among
the sampling-based methods at all tested ranks. T-ARP and its Householder implementation
are particularly competitive for moderate and larger ranks. Their SSIM values are comparable
at small ranks and become better for moderate and larger ranks. As expected, the truncated T-SVD remains the best method in terms of
approximation error, since it is not constrained to use actual tensor slices. Figure~\ref{fig:video_reconstruction_examples}
shows an additional example of video reconstruction. This video experiment uses the higher-order product of Martin et al.~\cite{Martin2013}. It is an implementation extension of the third-order method; the theorems in Section~\ref{sec:tsvd} are stated only for the third-order t-product. The input is a fourth-order tensor \(\mathcal M\in\mathbb R^{H\times W\times C\times T}\), where \(H,W,C,T\) denote height, width, number of channels, and number of frames. In this example, the T-ARP reconstruction preserves the player and several faces in the background. The YUV dataset is accessible at \url{https://media.xiph.org/video/derf/}.

The next subsection reports the synthetic experiments.


\begin{table}[H]
\centering
\caption{Statistics of \emph{relative error} measures on Kodak dataset ($\text{mean} \pm \text{std}$)}
\label{tab:kodak_relative_error}
\footnotesize
\begin{tabular}{lcccccc}
\toprule
Target rank \(r\) & ARP-T-CUR & T-SVD & \makecell{T Uniform\\ Sampling} & \makecell{T Leverage\\ Scores Sampling} & T-ARP & \makecell{T-ARP\\(Householder)} \\
\midrule
10 & 0.257 $\pm$ 0.084 & 0.167 $\pm$ 0.065 & 0.274 $\pm$ 0.081 & 0.267 $\pm$ 0.081 & 0.269 $\pm$ 0.083 & 0.269 $\pm$ 0.083 \\
20 & 0.211 $\pm$ 0.079 & 0.135 $\pm$ 0.057 & 0.227 $\pm$ 0.079 & 0.225 $\pm$ 0.076 & 0.215 $\pm$ 0.081 & 0.215 $\pm$ 0.081 \\
40 & 0.170 $\pm$ 0.070 & 0.105 $\pm$ 0.048 & 0.185 $\pm$ 0.070 & 0.186 $\pm$ 0.068 & 0.178 $\pm$ 0.071 & 0.178 $\pm$ 0.071 \\
60 & 0.149 $\pm$ 0.065 & 0.087 $\pm$ 0.041 & 0.164 $\pm$ 0.064 & 0.163 $\pm$ 0.063 & 0.151 $\pm$ 0.064 & 0.151 $\pm$ 0.064 \\
80 & 0.130 $\pm$ 0.059 & 0.074 $\pm$ 0.037 & 0.146 $\pm$ 0.060 & 0.145 $\pm$ 0.057 & 0.134 $\pm$ 0.059 & 0.134 $\pm$ 0.059 \\
100 & 0.118 $\pm$ 0.055 & 0.064 $\pm$ 0.032 & 0.132 $\pm$ 0.056 & 0.131 $\pm$ 0.053 & 0.120 $\pm$ 0.054 & 0.120 $\pm$ 0.054 \\
\bottomrule
\end{tabular}
\end{table}

\begin{table}[H]
\centering
\caption{Statistics of \emph{PSNR} measures on Kodak dataset ($\text{mean} \pm \text{std}$)}
\label{tab:kodak_psnr}
\footnotesize
\begin{tabular}{lcccccc}
\toprule
Target rank \(r\) & ARP-T-CUR & T-SVD & \makecell{T Uniform\\ Sampling} & \makecell{T Leverage\\ Scores Sampling} & T-ARP & \makecell{T-ARP\\(Householder)} \\
\midrule
10 & 19.018 $\pm$ 2.243 & 22.907 $\pm$ 2.585 & 18.366 $\pm$ 2.057 & 18.710 $\pm$ 2.281 & 18.568 $\pm$ 2.090 & 18.568 $\pm$ 2.090 \\
20 & 20.842 $\pm$ 2.486 & 24.825 $\pm$ 2.773 & 20.118 $\pm$ 2.303 & 20.202 $\pm$ 2.291 & 20.680 $\pm$ 2.563 & 20.680 $\pm$ 2.563 \\
40 & 22.779 $\pm$ 2.704 & 27.142 $\pm$ 3.013 & 21.981 $\pm$ 2.462 & 22.080 $\pm$ 2.517 & 22.380 $\pm$ 2.701 & 22.380 $\pm$ 2.701 \\
60 & 24.064 $\pm$ 2.886 & 28.830 $\pm$ 3.203 & 23.063 $\pm$ 2.542 & 23.432 $\pm$ 2.692 & 23.895 $\pm$ 2.884 & 23.895 $\pm$ 2.884 \\
80 & 25.267 $\pm$ 3.074 & 30.278 $\pm$ 3.360 & 24.120 $\pm$ 2.671 & 24.521 $\pm$ 2.789 & 24.989 $\pm$ 2.949 & 24.989 $\pm$ 2.949 \\
100 & 26.190 $\pm$ 3.139 & 31.605 $\pm$ 3.484 & 25.045 $\pm$ 2.782 & 25.506 $\pm$ 2.918 & 25.932 $\pm$ 3.025 & 25.932 $\pm$ 3.025 \\
\bottomrule
\end{tabular}
\end{table}

\begin{table}[H]
\centering
\caption{Statistics of \emph{SSIM} measures on Kodak dataset ($\text{mean} \pm \text{std}$)}
\label{tab:kodak_ssim}
\footnotesize
\begin{tabular}{lcccccc}
\toprule
Target rank \(r\) & ARP-T-CUR & T-SVD & \makecell{T Uniform\\ Sampling} & \makecell{T Leverage\\ Scores Sampling} & T-ARP & \makecell{T-ARP\\(Householder)} \\
\midrule
10 & 0.428 $\pm$ 0.130 & 0.589 $\pm$ 0.128 & 0.426 $\pm$ 0.135 & 0.431 $\pm$ 0.134 & 0.423 $\pm$ 0.130 & 0.423 $\pm$ 0.130 \\
20 & 0.471 $\pm$ 0.130 & 0.643 $\pm$ 0.113 & 0.465 $\pm$ 0.128 & 0.466 $\pm$ 0.126 & 0.471 $\pm$ 0.132 & 0.471 $\pm$ 0.132 \\
40 & 0.536 $\pm$ 0.123 & 0.718 $\pm$ 0.094 & 0.522 $\pm$ 0.120 & 0.519 $\pm$ 0.121 & 0.523 $\pm$ 0.124 & 0.523 $\pm$ 0.124 \\
60 & 0.586 $\pm$ 0.117 & 0.771 $\pm$ 0.080 & 0.563 $\pm$ 0.113 & 0.564 $\pm$ 0.114 & 0.583 $\pm$ 0.118 & 0.583 $\pm$ 0.118 \\
80 & 0.631 $\pm$ 0.110 & 0.812 $\pm$ 0.070 & 0.605 $\pm$ 0.106 & 0.603 $\pm$ 0.104 & 0.628 $\pm$ 0.110 & 0.628 $\pm$ 0.110 \\
100 & 0.670 $\pm$ 0.101 & 0.844 $\pm$ 0.060 & 0.643 $\pm$ 0.099 & 0.641 $\pm$ 0.096 & 0.668 $\pm$ 0.102 & 0.668 $\pm$ 0.102 \\
\bottomrule
\end{tabular}
\end{table}

\begin{figure}[ht!]
\centering
\begin{subfigure}{0.9\textwidth}
    \centering
    \resizebox{!}{0.25\textheight}
    {
\begin{tikzpicture}

\definecolor{crimson2143940}{RGB}{214,39,40}
\definecolor{darkgray176}{RGB}{176,176,176}
\definecolor{darkorange25512714}{RGB}{255,127,14}
\definecolor{forestgreen4416044}{RGB}{44,160,44}
\definecolor{lightgray204}{RGB}{204,204,204}
\definecolor{mediumpurple148103189}{RGB}{148,103,189}
\definecolor{orchid227119194}{RGB}{227,119,194}
\definecolor{sienna1408675}{RGB}{140,86,75}
\definecolor{steelblue31119180}{RGB}{31,119,180}

\begin{axis}[
width=\textwidth, height=0.625\textwidth,
legend cell align={left},
legend style={
  fill opacity=0.8,
  draw opacity=1,
  text opacity=1,
  at={(0.03,0.03)},
  anchor=south west,
  draw=lightgray204,
  font=\small
},
log basis y={10},
minor ytick={0.002,0.003,0.004,0.005,0.006,0.007,0.008,0.009,0.02,0.03,0.04,0.05,0.06,0.07,0.08,0.09,0.2,0.3,0.4,0.5,0.6,0.7,0.8,0.9,2,3,4,5,6,7,8,9,20,30,40,50,60,70,80,90},
tick align=outside,
tick pos=left,
title={Reconstruction Relative Error – Lower is better},
x grid style={darkgray176},
xlabel={Target rank $r$},
xmajorgrids,
xmin=5.5, xmax=104.5,
xtick style={color=black},
y grid style={darkgray176},
ylabel={Relative Error (log scale)},
ymajorgrids,
ymin=0.0591915555001068, ymax=0.38777519120125,
ymode=log,
ytick style={color=black}
]
\addplot [thick, steelblue31119180, mark=o, mark size=3, mark options={solid}]
table {%
10 0.256731335709478
20 0.210837695014597
40 0.170470984850853
60 0.148526801070681
80 0.13033801322695
100 0.11771161003889
};
\addlegendentry{ARP-T-CUR}
\addplot [thick, darkorange25512714, mark=*, mark size=3, mark options={solid}]
table {%
10 0.166817981324622
20 0.13520921933312
40 0.104889181672542
60 0.0871641512130016
80 0.0743297286064655
100 0.0641776850612685
};
\addlegendentry{T-SVD}
\addplot [thick, forestgreen4416044, mark=triangle, mark size=3, mark options={solid}]
table {%
10 0.273577474766627
20 0.226778700451988
40 0.184733157563646
60 0.163867531293315
80 0.145905576736699
100 0.131799650337248
};
\addlegendentry{T Uniform Sampling}
\addplot [thick, orchid227119194, mark=square, mark size=3, mark options={solid}]
table {%
10 0.267166414538512
20 0.225017744635527
40 0.186457808500992
60 0.162798409104007
80 0.145370552544989
100 0.130921141845928
};
\addlegendentry{T Lengths Squared Sampling}
\addplot [thick, crimson2143940, mark=diamond, mark size=3, mark options={solid}]
table {%
10 0.265597091169927
20 0.224620569015835
40 0.183139346066311
60 0.158192901500172
80 0.140357993878808
100 0.126043519417637
};
\addlegendentry{T Leverage Scores Sampling}
\addplot [thick, mediumpurple148103189, mark=+, mark size=6, mark options={solid}]
table {%
10 0.268508827613968
20 0.215071564028095
40 0.178045676961747
60 0.150914811120027
80 0.133701742626005
100 0.120499267943919
};
\addlegendentry{T-ARP}
\addplot [thick, sienna1408675, mark=x, mark size=6, mark options={solid}]
table {%
10 0.268508827613968
20 0.215071564028095
40 0.178045676961747
60 0.150914811120027
80 0.133701742626005
100 0.120499267943919
};
\addlegendentry{T-ARP (Householder)}

\end{axis}

\end{tikzpicture}}
    \caption{Relative error}
\end{subfigure}
\\[1ex]   
\begin{subfigure}{0.9\textwidth}
    \centering
    \resizebox{!}{0.25\textheight}
    {
\begin{tikzpicture}

\definecolor{crimson2143940}{RGB}{214,39,40}
\definecolor{darkgray176}{RGB}{176,176,176}
\definecolor{darkorange25512714}{RGB}{255,127,14}
\definecolor{forestgreen4416044}{RGB}{44,160,44}
\definecolor{lightgray204}{RGB}{204,204,204}
\definecolor{mediumpurple148103189}{RGB}{148,103,189}
\definecolor{orchid227119194}{RGB}{227,119,194}
\definecolor{sienna1408675}{RGB}{140,86,75}
\definecolor{steelblue31119180}{RGB}{31,119,180}

\begin{axis}[
width=\textwidth, height=0.625\textwidth,
legend cell align={left},
legend style={
  fill opacity=0.8,
  draw opacity=1,
  text opacity=1,
  at={(0.03,0.97)},
  anchor=north west,
  draw=lightgray204,
  font=\small
},
tick align=outside,
tick pos=left,
title={Peak Signal‑to‑Noise Ratio (PSNR) – Higher is better},
x grid style={darkgray176},
xlabel={Target rank $r$},
xmajorgrids,
xmin=5.5, xmax=104.5,
xtick style={color=black},
y grid style={darkgray176},
ylabel={PSNR (dB)},
ymajorgrids,
ymin=16.0424859325091, ymax=32.3465323646863,
ytick style={color=black}
]
\addplot [thick, steelblue31119180, mark=o, mark size=2, mark options={solid}]
table {%
10 19.0180056970953
20 20.8421286614876
40 22.7791543162373
60 24.0637875781074
80 25.2665552443878
100 26.1897226124334
};
\addlegendentry{ARP-T-CUR}
\addplot [thick, darkorange25512714, mark=*, mark size=3, mark options={solid}]
table {%
10 22.9067511597666
20 24.8253412999205
40 27.1423579307737
60 28.8302777884521
80 30.2778173827698
100 31.6054374122803
};
\addlegendentry{T-SVD}
\addplot [thick, forestgreen4416044, mark=triangle, mark size=3, mark options={solid}]
table {%
10 18.3662416322001
20 20.1182959636792
40 21.9808643109481
60 23.063479119885
80 24.1197132014432
100 25.0450812803085
};
\addlegendentry{T Uniform Sampling}
\addplot [thick, orchid227119194, mark=square, mark size=3, mark options={solid}]
table {%
10 18.5976187399001
20 20.1502821253142
40 21.8669568505643
60 23.1045151598978
80 24.1080839799737
100 25.0452131366168
};
\addlegendentry{T Lengths Squared Sampling}
\addplot [thick, crimson2143940, mark=diamond, mark size=3, mark options={solid}]
table {%
10 18.7095300192077
20 20.2018900072356
40 22.0799334054492
60 23.4321751207842
80 24.5213990622971
100 25.5060771772351
};
\addlegendentry{T Leverage Scores Sampling}
\addplot [thick, mediumpurple148103189, mark=+, mark size=6, mark options={solid}]
table {%
10 18.5675172983283
20 20.6799291242019
40 22.3802049555358
60 23.8951743011135
80 24.9886514605087
100 25.9316678347181
};
\addlegendentry{T-ARP}
\addplot [thick, sienna1408675, mark=x, mark size=6, mark options={solid}]
table {%
10 18.5675172983283
20 20.6799291242019
40 22.3802049555358
60 23.8951743011135
80 24.9886514605087
100 25.9316678347181
};
\addlegendentry{T-ARP (Householder)}
\end{axis}

\end{tikzpicture}}
    \caption{PSNR}
\end{subfigure}
\\[1ex]
\begin{subfigure}{0.9\textwidth}
    \centering
    \resizebox{!}{0.25\textheight}
    {
\begin{tikzpicture}

\definecolor{crimson2143940}{RGB}{214,39,40}
\definecolor{darkgray176}{RGB}{176,176,176}
\definecolor{darkorange25512714}{RGB}{255,127,14}
\definecolor{forestgreen4416044}{RGB}{44,160,44}
\definecolor{lightgray204}{RGB}{204,204,204}
\definecolor{mediumpurple148103189}{RGB}{148,103,189}
\definecolor{orchid227119194}{RGB}{227,119,194}
\definecolor{sienna1408675}{RGB}{140,86,75}
\definecolor{steelblue31119180}{RGB}{31,119,180}

\begin{axis}[
width=\textwidth, height=0.625\textwidth,
legend cell align={left},
legend style={
  fill opacity=0.8,
  draw opacity=1,
  text opacity=1,
  at={(0.03,0.97)},
  anchor=north west,
  draw=lightgray204,
  font=\small
},
tick align=outside,
tick pos=left,
title={Structural Similarity Index (SSIM) – Higher is better},
x grid style={darkgray176},
xlabel={Target rank $r$},
xmajorgrids,
xmin=5.5, xmax=104.5,
xtick style={color=black},
y grid style={darkgray176},
ylabel={SSIM},
ymajorgrids,
ymin=0.364917129402359, ymax=0.867174548283219,
ytick style={color=black}
]
\addplot [thick, steelblue31119180, mark=o, mark size=3, mark options={solid}]
table {%
10 0.428145568891612
20 0.471409867458581
40 0.536296147172191
60 0.586131053553626
80 0.631230108144373
100 0.669844994844149
};
\addlegendentry{ARP-T-CUR}
\addplot [thick, darkorange25512714, mark=*, mark size=3, mark options={solid}]
table {%
10 0.588897641689434
20 0.642763798709749
40 0.717836630692143
60 0.770844666080352
80 0.811562490985377
100 0.844346078234332
};
\addlegendentry{T-SVD}
\addplot [thick, forestgreen4416044, mark=triangle, mark size=3, mark options={solid}]
table {%
10 0.426453906634748
20 0.465051760627376
40 0.522308601994825
60 0.56341784803574
80 0.605157303776503
100 0.643223965774472
};
\addlegendentry{T Uniform Sampling}
\addplot [thick, orchid227119194, mark=square, mark size=3, mark options={solid}]
table {%
10 0.430615033876279
20 0.465929472869091
40 0.519436344803934
60 0.563873114349012
80 0.602661917515583
100 0.640900442553049
};
\addlegendentry{T Lengths Squared Sampling}
\addplot [thick, crimson2143940, mark=diamond, mark size=3, mark options={solid}]
table {%
10 0.421721864223572
20 0.454611695081742
40 0.512979178290881
60 0.563898120815407
80 0.607732457642798
100 0.64921730226709
};
\addlegendentry{T Leverage Scores Sampling}
\addplot [thick, mediumpurple148103189, mark=+, mark size=6, mark options={solid}]
table {%
10 0.422797222357265
20 0.470552692546765
40 0.523177266902036
60 0.583182683879592
80 0.628474296659119
100 0.667525874879744
};
\addlegendentry{T-ARP}
\addplot [thick, sienna1408675, mark=x, mark size=6, mark options={solid}]
table {%
10 0.422797222357265
20 0.470552692546765
40 0.523177266902036
60 0.583182683879592
80 0.628474296659119
100 0.667525874879744
};
\addlegendentry{T-ARP (Householder)}
\end{axis}

\end{tikzpicture}}
    \caption{SSIM}
\end{subfigure}
\caption{Visualization of mean values of metrics on the Kodak dataset. \emph{Relative error}, \emph{PSNR}, and \emph{SSIM} are shown in subfigures (a), (b), and (c), respectively. The horizontal axis reports the target rank \(r\); for the common-index methods, \(r\) is also the number of selected lateral and horizontal slices, while ARP-T-CUR uses \(r\) row and column indices at each nonredundant Fourier frequency.}\label{fig:kodak_metrics}
\end{figure}

\subsection{Results on Synthetic Data}
\label{subsec:results_on_synthetic_data}

The synthetic experiments compare ARP-T-CUR and the two implementations of T-ARP with truncated T-SVD, uniform slice sampling, and leverage-score slice sampling. T-SVD is included as an accuracy reference and is not constrained to use actual tensor slices. To isolate the effect of slice selection, the basis-based methods use bases computed from the exact T-SVD of each noisy tensor. All sampling-based methods, including ARP-T-CUR, are evaluated with the two-sided projection used in the practical benchmarks. These curves therefore compare the selected subspaces; they do not directly test the inverse-based ARP-T-CUR bound of Theorem~\ref{thm:arp-tsvd-error}. For every target rank, we report the mean relative error over 20 independent realizations. Only the mean curves are displayed.

In both experiments, the noiseless tensor \(\mathcal Z_0\) is perturbed by Gaussian noise with a prescribed relative Frobenius norm. More precisely, we set
\[
\mathcal Z
=
\mathcal Z_0
+
\varepsilon
\frac{\|\mathcal Z_0\|_F}{\|\mathcal E\|_F}
\mathcal E,
\qquad
\varepsilon=10^{-6},
\]
where the entries of $\mathcal E$ are independent standard Gaussian random variables. Hence,
\[
\frac{\|\mathcal Z-\mathcal Z_0\|_F}{\|\mathcal Z_0\|_F}
=
10^{-6}.
\]

\paragraph{Function-based tensor}
We first consider the noiseless third-order tensor \(\mathcal X_0\in\mathbb R^{m\times n\times p}\) defined by
\[
\mathcal X_0(i,j,\ell)
=
\frac{1}{(i+j+\ell)^{1/\alpha}},
\qquad \alpha\geq 1.
\]
These tensors are numerically low tubal rank: their tubal singular values decay rapidly, although their exact tubal rank depends on the numerical tolerance. We use $\mathcal X_0\in\mathbb R^{60\times60\times60}$ with $\alpha=2$, and generate 20 noisy tensors by drawing 20 independent perturbations $\mathcal E$.

The results are shown in Figure~\ref{fig:results_on_synthetic_data}(a). As expected, T-SVD gives the smallest error and reaches the noise level at a small target rank. Among the methods based on actual slices, T-ARP generally gives the lowest error for moderate and larger target ranks. Its algebraic and Householder implementations produce indistinguishable curves. ARP-T-CUR is also competitive: it performs similarly to the other adaptive sampling methods and improves substantially over uniform sampling for most tested ranks.

\paragraph{Random tensor}
We next generate a noiseless random tensor from the factor model
\[
\mathcal Y_0
=
\mathcal A*\mathcal B,
\]
where \(\mathcal A\in\mathbb R^{m\times r_0\times p}\) and \(\mathcal B\in\mathbb R^{r_0\times n\times p}\) have independent entries drawn uniformly from \([0,1]\). The tensor \(\mathcal Y_0\) has tubal rank at most \(r_0\). In the reported realizations, its numerical tubal rank is \(r_0=7\). We use \(m=n=p=60\). For each of the 20 trials, the factors \(\mathcal A\) and \(\mathcal B\), as well as the relative Gaussian perturbation, are generated independently.

Figure~\ref{fig:results_on_synthetic_data}(b) shows the results. The T-SVD error reaches the noise level when the target rank reaches \(r_0=7\). The slice-selection methods require a few additional slices, but all of them reach errors of order \(10^{-6}\) when approximately 10--12 lateral and horizontal slices are selected. ARP-T-CUR is comparable to the common-index methods in this experiment. The algebraic and Householder T-ARP curves again coincide, which illustrates their numerical equivalence in these tests.

\begin{figure}[ht!]
\centering
\begin{subfigure}{0.48\textwidth}
    \centering
    \resizebox{\linewidth}{!}{
\begin{tikzpicture}

\definecolor{crimson2143940}{RGB}{214,39,40}
\definecolor{darkgray176}{RGB}{176,176,176}
\definecolor{darkorange25512714}{RGB}{255,127,14}
\definecolor{forestgreen4416044}{RGB}{44,160,44}
\definecolor{lightgray204}{RGB}{204,204,204}
\definecolor{mediumpurple148103189}{RGB}{148,103,189}
\definecolor{sienna1408675}{RGB}{140,86,75}
\definecolor{steelblue31119180}{RGB}{31,119,180}

\begin{axis}[
legend cell align={left},
legend style={fill opacity=0.8, draw opacity=1, text opacity=1, draw=lightgray204, font=\tiny
},
log basis y={10},
tick align=outside,
tick pos=left,
title={Reconstruction Relative Error – Lower is better},
x grid style={darkgray176},
xlabel={\# Slices},
xmajorgrids,
xmin=1, xmax=23,
xtick style={color=black},
y grid style={darkgray176},
ylabel={Relative Error (log scale)},
ymajorgrids,
ymin=1.35912616564645e-07, ymax=0.041668268042864,
ymode=log,
ytick style={color=black}
]
\addplot [thick, steelblue31119180, mark=o, mark size=2, mark options={solid}]
table {%
2 0.00684985023705262
3 0.00103956086576085
4 0.000122300827113568
5 1.89405424275876e-05
6 3.29212079175371e-06
7 2.18239026742877e-06
8 1.85354186818044e-06
9 1.67244680419419e-06
10 1.50064227328843e-06
11 1.42333950785098e-06
12 1.38197888082361e-06
13 1.30386008451256e-06
14 1.21560450611012e-06
15 1.18923990301018e-06
16 1.13164042992546e-06
17 1.10719039023551e-06
18 1.06478890808948e-06
19 1.02180064140061e-06
20 1.00304669563682e-06
21 9.69178116583512e-07
22 9.32386448407346e-07
};
\addlegendentry{ARP T-CUR}
\addplot [thick, darkorange25512714, mark=*, mark size=2, mark options={solid}]
table {%
2 0.00321470293584038
3 0.000365992913455423
4 4.43233850716192e-05
5 5.41486494663476e-06
6 1.09461304737164e-06
7 8.84959293943038e-07
8 8.53062679025467e-07
9 8.23376842816498e-07
10 7.94898907200398e-07
11 7.67366584767411e-07
12 7.40596490593475e-07
13 7.14550557457734e-07
14 6.89188839911323e-07
15 6.64409472228148e-07
16 6.4019417927914e-07
17 6.16538599915413e-07
18 5.93372773942143e-07
19 5.70681173567675e-07
20 5.48453388535584e-07
21 5.26690421312239e-07
22 5.05332602828905e-07
};
\addlegendentry{T-SVD}
\addplot [thick, forestgreen4416044, mark=triangle, mark size=2, mark options={solid}]
table {%
2 0.0106373571556392
3 0.00295393948073906
4 0.00141471114504976
5 0.000859723895735241
6 0.000571000354050475
7 0.000414058399822025
8 0.000294000058557287
9 0.000246828391601505
10 0.000194344167381091
11 0.000179814220419711
12 0.000165911931668119
13 0.000117727444857063
14 6.75858354744305e-05
15 5.42431183088924e-05
16 4.84563379548514e-05
17 4.64052407568414e-05
18 4.18776043011734e-05
19 4.02072159240253e-05
20 3.84608768761581e-05
21 2.97812122414507e-05
22 2.16588600156883e-05
};
\addlegendentry{T Uniform Sampling}
\addplot [thick, crimson2143940, mark=diamond, mark size=2, mark options={solid}]
table {%
2 0.0110036889984432
3 0.00200220264007299
4 0.00031934068583695
5 0.000120460688208996
6 3.67982945624459e-05
7 1.88682962125605e-05
8 2.06909883736607e-05
9 1.8379597934418e-05
10 7.85359543136375e-06
11 1.01702327613978e-05
12 7.74925199301469e-06
13 1.13994958569423e-05
14 1.0951954249872e-05
15 1.06201695941066e-05
16 5.78605546132703e-06
17 6.08444306374053e-06
18 4.84804073816709e-06
19 4.43355213320183e-06
20 4.159044980376e-06
21 4.00901363335226e-06
22 3.90128713300835e-06
};
\addlegendentry{T Leverage Scores Sampling}
\addplot [thick, mediumpurple148103189, mark=+, mark size=4, mark options={solid}]
table {%
2 0.00734069342012056
3 0.000949736277238376
4 0.000129940203618802
5 1.61549156577029e-05
6 3.81521317171035e-06
7 3.17934550642503e-06
8 1.84927744944129e-06
9 1.91805691818239e-06
10 1.58741491784274e-06
11 1.34732256301275e-06
12 1.45392120074381e-06
13 1.24514828047432e-06
14 1.26825125817742e-06
15 1.22709948524393e-06
16 1.17229482798663e-06
17 1.14451719425807e-06
18 1.10620848218794e-06
19 1.26101771217806e-06
20 1.00175796572477e-06
21 9.62142563474364e-07
22 9.83372700085864e-07
};
\addlegendentry{T-ARP}
\addplot [thick, sienna1408675, mark=x, mark size=4, mark options={solid}]
table {%
2 0.00734069342012056
3 0.000949736277238376
4 0.000129940203618802
5 1.61549156577029e-05
6 3.81521317171035e-06
7 3.17934550642503e-06
8 1.84927744944129e-06
9 1.91805691818239e-06
10 1.58741491784274e-06
11 1.34732256301275e-06
12 1.45392120074381e-06
13 1.24514828047432e-06
14 1.26825125817742e-06
15 1.22709948524393e-06
16 1.17229482798663e-06
17 1.14451719425807e-06
18 1.10620848218794e-06
19 1.26101771217806e-06
20 1.00175796572477e-06
21 9.62142563474364e-07
22 9.83372700085864e-07
};
\addlegendentry{T-ARP (Householder)}
\end{axis}

\end{tikzpicture}}
    \caption{Function-based tensor.}
\end{subfigure}
\hfill
\begin{subfigure}{0.48\textwidth}
    \centering
    \resizebox{\linewidth}{!}{
\begin{tikzpicture}

\definecolor{crimson2143940}{RGB}{214,39,40}
\definecolor{darkgray176}{RGB}{176,176,176}
\definecolor{darkorange25512714}{RGB}{255,127,14}
\definecolor{forestgreen4416044}{RGB}{44,160,44}
\definecolor{lightgray204}{RGB}{204,204,204}
\definecolor{mediumpurple148103189}{RGB}{148,103,189}
\definecolor{sienna1408675}{RGB}{140,86,75}
\definecolor{steelblue31119180}{RGB}{31,119,180}

\begin{axis}[
legend cell align={left},
legend style={fill opacity=0.8, draw opacity=1, text opacity=1, draw=lightgray204, font=\tiny
},
log basis y={10},
tick align=outside,
tick pos=left,
title={Reconstruction Relative Error – Lower is better},
x grid style={darkgray176},
xlabel={\# Slices},
xmajorgrids,
xmin=1, xmax=23,
xtick style={color=black},
y grid style={darkgray176},
ylabel={Relative Error (log scale)},
ymajorgrids,
ymin=2.60965294921384e-07, ymax=0.0261349762978426,
ymode=log,
ytick style={color=black}
]
\addplot [thick, steelblue31119180, mark=o, mark size=2, mark options={solid}]
table {%
2 0.0149518621964274
3 0.0136245257788389
4 0.011956123984121
5 0.00987724502604817
6 0.007025451453657
7 3.44376527306146e-06
8 2.42767779855521e-06
9 2.03478600056499e-06
10 1.78803399490739e-06
11 1.61679470405241e-06
12 1.48851801860183e-06
13 1.39632290250975e-06
14 1.32392222758019e-06
15 1.25366505326572e-06
16 1.19521469276614e-06
17 1.14657420428862e-06
18 1.10457369376886e-06
19 1.06355563690568e-06
20 1.02458861596128e-06
21 9.91497522540367e-07
22 9.57420518506575e-07
};
\addlegendentry{ARP T-CUR}
\addplot [thick, darkorange25512714, mark=*, mark size=2, mark options={solid}]
table {%
2 0.0118851367527048
3 0.00991760338600087
4 0.00799986590475393
5 0.00604863941757958
6 0.00392514491569863
7 8.83472965591847e-07
8 8.52213257530488e-07
9 8.226472980225e-07
10 7.94191162477645e-07
11 7.66657838974267e-07
12 7.39968390166858e-07
13 7.13999448945432e-07
14 6.88649938557673e-07
15 6.63902783559852e-07
16 6.39764835627821e-07
17 6.16128132222979e-07
18 5.9297840168743e-07
19 5.70340779634772e-07
20 5.48154486486989e-07
21 5.26392574515138e-07
22 5.05059960139362e-07
};
\addlegendentry{T-SVD}
\addplot [thick, forestgreen4416044, mark=triangle, mark size=2, mark options={solid}]
table {%
2 0.0153523904967098
3 0.0141369000394757
4 0.0125489615781654
5 0.0104869216187548
6 0.00755503714895819
7 1.22618434699412e-05
8 3.57647675547134e-06
9 2.44787695107769e-06
10 2.02728775026779e-06
11 1.77299952221354e-06
12 1.60605313275746e-06
13 1.47880069017317e-06
14 1.38201642329869e-06
15 1.30400308549622e-06
16 1.23931812757033e-06
17 1.18374217914981e-06
18 1.13417652795429e-06
19 1.0899518335699e-06
20 1.04876213924343e-06
21 1.01113088193417e-06
22 9.76847824819841e-07
};
\addlegendentry{T Uniform Sampling}
\addplot [thick, crimson2143940, mark=diamond, mark size=2, mark options={solid}]
table {%
2 0.015311897806069
3 0.0141364794422601
4 0.0125468554226836
5 0.0104672883022708
6 0.00754345532740826
7 1.25660352792772e-05
8 3.59775908015059e-06
9 2.46165909511436e-06
10 2.02472033993272e-06
11 1.77381275543029e-06
12 1.6059877849457e-06
13 1.47895202013515e-06
14 1.38018749496769e-06
15 1.30463558660489e-06
16 1.23913222276629e-06
17 1.18347428303187e-06
18 1.13384031135125e-06
19 1.08979091071175e-06
20 1.04890984954308e-06
21 1.01108852907119e-06
22 9.76585157444519e-07
};
\addlegendentry{T Leverage Scores Sampling}
\addplot [thick, mediumpurple148103189, mark=+, mark size=4, mark options={solid}]
table {%
2 0.0153387178008227
3 0.0141538715511427
4 0.0125547572797964
5 0.0104929429994219
6 0.00753153305590553
7 1.40099764606389e-05
8 3.42221390471236e-06
9 2.41769141576789e-06
10 1.99852017816572e-06
11 1.7603338041131e-06
12 1.59762819070631e-06
13 1.47135696080771e-06
14 1.38095810361669e-06
15 1.30002329129334e-06
16 1.23745047938511e-06
17 1.18066143893566e-06
18 1.13236265958715e-06
19 1.08749672551026e-06
20 1.04720134452903e-06
21 1.00967820916788e-06
22 9.74660341436771e-07
};
\addlegendentry{T-ARP}
\addplot [thick, sienna1408675, mark=x, mark size=4, mark options={solid}]
table {%
2 0.0153387178008227
3 0.0141538715511427
4 0.0125547572797964
5 0.0104929429994219
6 0.00753153305590553
7 1.40099764606389e-05
8 3.42221390471236e-06
9 2.41769141576789e-06
10 1.99852017816572e-06
11 1.7603338041131e-06
12 1.59762819070631e-06
13 1.47135696080771e-06
14 1.38095810361669e-06
15 1.30002329129334e-06
16 1.23745047938511e-06
17 1.18066143893566e-06
18 1.13236265958715e-06
19 1.08749672551026e-06
20 1.04720134452903e-06
21 1.00967820916788e-06
22 9.74660341436771e-07
};
\addlegendentry{T-ARP (Householder)}
\end{axis}

\end{tikzpicture}}
    \caption{Random factor-model tensor.}
\end{subfigure}
\caption{Mean reconstruction relative errors over 20 independent realizations as a function of the target rank \(r\). For the common-index methods, \(r\) is also the number of lateral and horizontal slices selected; ARP-T-CUR uses \(r\) row and column indices at each nonredundant Fourier frequency. In both experiments, the noiseless tensor is perturbed by Gaussian noise with relative Frobenius norm \(10^{-6}\). Subfigure~(a) shows the results for a function-based tensor \(\mathcal X_0\in\mathbb R^{60\times60\times60}\) with \(\alpha=2\). Subfigure~(b) shows the results for a random tensor generated from a factor model with \(r_0=7\) and size \(60\times60\times60\).}
\label{fig:results_on_synthetic_data}
\end{figure}

\subsection{Numerical validation of the common-index bound}
\label{subsec:empirical-tarp-bound}

We complement the previous approximation experiments with a small test that uses
exactly the randomized approximation appearing in
Theorem~\ref{thm:tarp-error}. We consider
\[
\mathcal X\in\mathbb R^{10\times 8\times 3},
\qquad
\mathcal V\in\mathbb R^{8\times 3\times 3},
\qquad
r=3.
\]
For every ordered index sequence \(J\) returned by randomized T-ARP, we compute
the normalized squared error
\[
R(J)
=
\frac{
\left\|
\mathcal X
-
\mathcal X(:,J,:)
*
\bigl(\mathcal V(J,:,:)\bigr)^{-\top}
*
\mathcal V^\top
\right\|_F^2
}{
\left\|
\mathcal X
-
\mathcal X
*
\mathcal V
*
\mathcal V^\top
\right\|_F^2
}.
\]
Theorem~\ref{thm:tarp-error} gives
\[
\mathbb E[R(J)]
\leq
\Gamma_\eta.
\]

The test tensors are constructed in the Fourier domain. For each frequency,
the columns of \(\widehat V^{(\ell)}\) are orthonormal, and we set
\[
\widehat X^{(\ell)}
=
G^{(\ell)}
\bigl(\widehat V^{(\ell)}\bigr)^H
+
0.15\,E^{(\ell)}
\left(
I
-
\widehat V^{(\ell)}
\bigl(\widehat V^{(\ell)}\bigr)^H
\right),
\]
where \(G^{(\ell)}\) and \(E^{(\ell)}\) are Gaussian matrices. Conjugate
symmetry is imposed so that \(\mathcal X\) and \(\mathcal V\) are real.

We consider two cases. In the aligned case, the same right basis is used at
every Fourier frequency. In the mildly misaligned case, we perturb the basis at one member of the non-DC conjugate pair by adding a complex Gaussian perturbation of magnitude \(0.02\), followed by reorthogonalization. The conjugate basis is used at the paired frequency.

Since \(n=8\) and \(r=3\), all
\[
8\cdot 7\cdot 6=336
\]
ordered sequences of three distinct indices can be enumerated. We use this
enumeration to verify the support, frequency-wise rank-growth, and final
nonsingularity assumptions for every possible history. It also allows us to
compute each \(\eta_k\) as the maximum required ratio over all positive-probability histories, frequencies, and admissible indices, together with the factor \(\Gamma_\eta\), and the 
full-enumeration expectation of \(R(J)\), which is exact for this finite algorithmic distribution up to floating-point arithmetic. As a Monte Carlo check, we draw \(100{,}000\) ordered histories from the fully enumerated T-ARP distribution and estimate the same expectation.

In the aligned case, the frequency-wise leverage-score distributions
coincide. We obtain
\[
\eta_1=\eta_2=\eta_3=1,
\qquad
\Gamma_\eta=4.
\]
The full-enumeration normalized expectation is
\[
\mathbb E[R(J)]=4,
\]
so the \(r+1\) bound is attained in this example. The Monte Carlo estimate is
\(4.044\), with standard error \(0.045\), and is therefore consistent with
the full-enumeration value.

In the mildly misaligned case, we obtain
\[
(\eta_1,\eta_2,\eta_3)
=
(1.028,1.138,2.795),
\qquad
\Gamma_\eta=7.994.
\]
The full-enumeration expectation is \(4.045\), while the Monte Carlo estimate is
\(4.066\), with standard error \(0.022\). The theoretical bound remains
valid, although it becomes more conservative when the frequency-wise
sampling distributions are not perfectly aligned.

\begin{figure}[htbp]
\centering
\resizebox{0.78\linewidth}{!}{%
\definecolor{empiricalBlue}{RGB}{0,114,178}
\definecolor{boundOrange}{RGB}{230,159,0}
\begin{tikzpicture}
\begin{axis}[
width=0.78\linewidth,
height=0.55\linewidth,
ybar=3pt,
bar width=15pt,
ylabel={Normalized squared error},
symbolic x coords={Aligned,Mildly misaligned},
xtick=data,
ymin=0,
grid=major,
legend style={font=\scriptsize, at={(0.5,-0.22)}, anchor=north, legend columns=3},
]
\addplot+[fill=empiricalBlue, draw=empiricalBlue] coordinates {(Aligned,4.043721627892e+00) (Mildly misaligned,4.066138458756e+00)};
\addlegendentry{Empirical mean}
\addplot+[fill=boundOrange, draw=boundOrange] coordinates {(Aligned,4.000000000000e+00) (Mildly misaligned,7.993872750631e+00)};
\addlegendentry{Theoretical bound}
\addplot+[only marks, mark=diamond*, black, mark size=2.8pt] coordinates {(Aligned,4.000000000000e+00) (Mildly misaligned,4.044559518512e+00)};
\addlegendentry{Exact expectation}
\end{axis}
\end{tikzpicture}%
}
\caption{Numerical validation of Theorem~\ref{thm:tarp-error}.
The bars compare the full-enumeration normalized expectations with the theoretical
factors \(\Gamma_\eta\). The points show Monte Carlo means obtained from
\(100{,}000\) samples drawn from the fully enumerated T-ARP distribution. The corresponding standard errors are reported in the text.}
\label{fig:empirical-tarp-bound}
\end{figure}

\section{Conclusion and future work}
\label{sec:conclusion}

We have developed two extensions of Adaptive Randomized Pivoting in the t-product framework. ARP-T-CUR applies matrix ARP-cross at the nonredundant Fourier frequencies and couples conjugate pairs to preserve real-valued tensors. T-ARP instead selects common lateral and horizontal slices, with the same indices used at every Fourier frequency. It therefore gives a tensor cross approximation with indices shared across all frequencies.

The main difficulty in the analysis of T-ARP comes from this common-index constraint. Under frequency-alignment and frequency-wise rank-growth assumptions, we prove an expected-error bound for one-sided T-ARP and derive corresponding bounds for two-stage tensor cross approximation and t-DEIM. When the frequency-wise sampling distributions are perfectly aligned, the bound recovers the matrix ARP factor \(r+1\). In the fully enumerated example, all assumptions are verified and this factor is attained up to floating-point accuracy. The mildly misaligned example shows that the bound remains valid but becomes less tight when the frequency-wise distributions differ.

The numerical experiments have two different goals. The synthetic, image, and video tests compare the practical quality of the selected subspaces. Both proposed methods perform favorably against the considered sampling baselines: ARP-T-CUR is particularly strong on the image experiments, while T-ARP is highly competitive on both real and synthetic data. These experiments use the two-sided projection $\mathcal C*\mathcal C^\dagger*\mathcal X*\mathcal R^\dagger*\mathcal R.$
A separate small synthetic experiment uses exactly the randomized interpolatory approximation studied in Theorem~\ref{thm:tarp-error}. By enumerating all possible index sequences, it verifies the assumptions and the corresponding expected-error bound.

Several questions remain open. First, it would be useful to identify classes of tensors for which the alignment constants remain close to one and the rank-growth condition holds. Second, the two-stage tensor cross bound could be tested with an experiment that uses exactly the approximation in Theorem~\ref{thm:tcross-error}. Other directions include adaptive rank selection, faster Householder or sketching implementations, and extensions to higher-order tensor products.

\bibliographystyle{elsarticle-num} 
\bibliography{ref}

 
\end{document}